\newcommand{\makesmaller}[2]{#1{\mathpalette\make@smaller@{#2}}}
\newcommand{\make@smaller@}[2]{%
  \vcenter{\hbox{$\m@th\downgrade@style#1#2$}}%
}
\newcommand{\downgrade@style}[1]{%
  \ifx#1\displaystyle\scriptstyle\else
    \ifx#1\textstyle\scriptstyle\else
      \scriptscriptstyle
  \fi\fi
}
\renewcommand{\theequation}{\thesubsubsection.\arabic{equation}}         
\def\newprooflikeenvironment#1#2#3#4{%                             
     \newenvironment{#1}[1][]{%                                    
         \refstepcounter{equation}%                                 
         \begin{proof}[{\rm\csname#4\endcsname{#2~\theequation}}]%
         \def\qedsymbol{#3}}%                                      
        {\end{proof}}}                                             
\theoremstyle{plain}                                               
\newtheorem{theorem}[subsubsection]{Theorem}                            
\newtheorem{lemma}[subsubsection]{Lemma}                                
\newtheorem{corollary}[subsubsection]{Corollary}                        
\newtheorem{proposition}[subsubsection]{Proposition}
\newcommand{\arr}{\longrightarrow}
\newcommand{\calg}{\mathrm{CAlg}}
\newcommand{\bs}{{\makesmaller{\mathrel}{\blacksquare}}}
\newcommand{\CC}{\mathcal{C}}
\newcommand{\comp}{{\makesmaller{\mathrel}{\wedge}}}
\newcommand{\colim}{\mathrm{colim}}
\newcommand{\DD}{\mathcal{D}}
\newcommand{\define}{\text{def}}
\renewcommand{\dim}{\mathrm{dim}}
\newcommand{\E}{\mathcal{E}}
\newcommand{\fib}{\mathrm{fib}}
\newcommand{\Ga}{\Gamma}
\renewcommand{\hom}{\text{Hom}}
\newcommand{\rig}{\mathrm{rig}}
\newcommand{\ind}{\mathrm{Ind}}
\newcommand{\K}{\mathcal{K}}
\newcommand{\leftcomp}{{\stackrel{\comp}{\text{\phantom{a}}}\!\!}}
\renewcommand{\mod}{\mathrm{Mod}}
\newcommand{\N}{\mathbb{N}}
\newcommand{\nuc}{\mathrm{Nuc}}
\renewcommand{\O}{\mathcal{O}}
\newcommand{\op}{\mathrm{op}}
\newcommand{\p}{\mathfrak{p}}
\newcommand{\qcoh}{\text{QCoh}}
\newcommand{\q}{\mathfrak{q}}
\DeclareMathOperator{\spec}{Spec}
\DeclareMathOperator{\uhom}{\underline{Hom}}
\newcommand{\Z}{\mathbb{Z}}
\newcommand{\invsimeq}{\mathrel{\rotatebox[origin=c]{90}{$\simeq$}}}
\newcommand{\invertin}{\mathrel{\rotatebox[origin=c]{90}{$\in$}}}
\newcommand{\ssec}{\subsection}
\newcommand{\sssec}{\subsubsection}
\renewcommand{\thicksim}{{\overset{\sim}{\!\!\phantom{\ldots}}}}
\newcommand{\pr}{\mathrm{Pr}}
\newcommand{\st}{\mathrm{st}}
\newcommand{\dual}{\mathrm{dual}}
\title{Adelic descent for continuous localizing invariants}
\author{Grigorii Konovalov}
\date{July 2025}
\begin{document}

\begin{abstract}
We prove a version of adelic descent for continuous localizing invariants.
\end{abstract}
\maketitle

\section{Introduction}

Let $X$ be a scheme of finite type over $\Z$. Recall from \cite{brav2024beilinsonparshinadelessolidalgebraic},
Theorem 4.2.4, that solid quasi-coherent sheaves on $X$ satisfy adelic descent. Namely, for any tuple $0 \le k_0 < \ldots k_m \le d$,
where $d$ denotes the dimension of $X$, we have the corresponding adelic ring \[\begin{split}
L_{k_0} \ldots L_{k_m} \O_X \;\simeq\; \prod_{\substack{\p_m \in X \\ \dim \overline{\{\p_m\}} = k_m}} \leftcomp\: (\ldots \prod_{\substack{\p_1\in \overline{\{\p_2\}}\\\dim \overline{\{\p_1\}} = k_1}}\leftcomp\:(\prod_{\substack{\p_0 \in \overline{\{\p_1\}} \\ \dim \overline{\{\p_0\}} = k_0}} \leftcomp\:\O_{X,\p_0})_{\p_1} \ldots )_{\p_m}
\;\in\; \calg(\qcoh(X_\bs)) \;,
\end{split}\]
where $\leftcomp\:(-)_\p$ denotes the functor given by localizing at the prime $\p \in X$ first and then completing at it.
In fact, the formation of adelic rings defines a functor \[\xymatrix{
[0 < 1]^d \ar[rr] && \calg(\qcoh(X_\bs)) \\
0 \le k_0 < \ldots k_m \le d \ar@{}[u]|\invertin \ar@{|->}[rr] && L_{k_0} \ldots L_{k_m} \O_X \ar@{}[u]|\invertin &.
}\] In particular, we get a symmetric monoidal functor
\begin{equation}\label{intro-solid-adelic-functor}
\qcoh(X_\bs) \arr \stackrel[\substack{0 \le k_0 < k_1 < \ldots < k_m \le d \\ 0\le m}]{}{\lim} L_{k_0}\ldots L_{k_m} \O_X - \mod_{\qcoh(X_\bs)} \;,
\end{equation}
which happens to be an equivalence---see Theorem 4.2.4 of \cite{brav2024beilinsonparshinadelessolidalgebraic}.

All of the categories appearing in (\ref{intro-solid-adelic-functor}) are compactly generated, but they are not suitable for
studying localizing invariants because the category $\qcoh(X_\bs)^\omega$ satisfies a version of the Eilenberg swindle. Namely,
for any fixed object $P \in \qcoh(X_\bs)^\omega$, there exists a countable product $\prod_\N P \in \qcoh(X_\bs)^\omega$.
Nevetheless, Clausen-Scholze defined (8th lecture in \cite{complex}, 13th lecture in \cite{analytic})
the category of nuclear modules over an analytic ring
(more generally, nuclear objects in a symmetric monoidal category), which is a presentable stable dualizable category and is
expected to produce correct localizing invariants (in the sense of \cite{efimov2025ktheorylocalizinginvariantslarge})
in many situations including those coming from formal geometry.
For a certain nice class of localizing invariants such as K-theory, that expectation was proven by Efimov 
(\cite{efimov2025localizinginvariantsinverselimits}, Theorem 7.9) in the case of a ring completion.
It therefore should be reasonable to study localizing invariants of the adeles using the nuclear modules of Clausen-Scholze.

In this paper, we prove that the diagram \[\xymatrix{
[0<1]^d \ar[rr] && \calg(\pr^\dual_\st) \\
0 \le s_0 < s_1 < \ldots < s_m \le d \ar@{}[u]|\invertin \ar@{|->}[rr] && \nuc(L_{s_0} \ldots L_{s_m} \O_X, X_\bs) \ar@{}[u]|\invertin &,
}\]
which we basically get from the diagram (\ref{intro-solid-adelic-functor}) by passing to nuclear objects,
is mapped to a limit diagram by any stable localizing invariant, see Theorem \ref{thm-nuc-adelic-descent}.
We also prove that this diagram is a limit diagram
in both $\calg(\pr^L_\st)$ and $\calg(\pr^\dual_\st)$, see Theorem \ref{thm-adelic-descent-cats}.
The latter in fact follows directly from
Theorem 4.2.4 of \cite{brav2024beilinsonparshinadelessolidalgebraic} by passing to rigidification, but the argument
presented in this paper is in a sense independent of that theorem.

We note that versions of adelic descent for localizing invariants have already been recorded in the literature,
see \cite{kim2021adelicdescentktheory} and \cite{kim2024formalgluingdiagramscontinuous}. In the former paper, the author
studies localizing invariants of perfect modules over the adeles and proves the adelic descent in that context. In the latter
paper, the author does all the algebra within the category $\qcoh(X) - \mod(\pr^\dual_\st)$ and proves a version
of adelic descent for localizing invariants of dualizable categories. Namely, for a category $\CC \in \qcoh(X) - \mod(\pr^\dual_\st)$,
\cite{kim2024formalgluingdiagramscontinuous} constructs a diagram consisting of products of iterated completed localizations of $\CC$
computed inside $\qcoh(X) - \mod(\pr^\dual_\st)$ and proves the descent statement for it, see Theorem 1.4 of loc. cit..
On the other hand, our approach is to do all the algebra inside $\qcoh(X_\bs)$
(which has already been done in \cite{brav2024beilinsonparshinadelessolidalgebraic})
and see what kind statement about localizing invariants we can get as an output.

At the categorical level, the difference between our version and Kim's version (\cite{kim2024formalgluingdiagramscontinuous})
is in a sense similar to the difference between Clausen-Scholze's category of nuclear modules and Efimov's category of
nuclear modules in the case of a ring completion,
which is well explained in the introduction to \cite{efimov2025localizinginvariantsinverselimits}.
That difference may disappear after passing
to a nice localizing invariant---like it does in the case of a ring completion,
see Theorem 7.9 in \cite{efimov2025localizinginvariantsinverselimits}---but that seems to need a proof.

\medskip
\noindent \textbf{The paper is structured as follows.}
In the second section, we summarize some results about nuclear objects and derive some relevant corollaries about nuclear modules
over the adelic rings. In the thrid section, we state and prove out main theorem.

\ssec{Notation and conventions}

\sssec{}
In this paper, we work in the setting of $\infty$-categories. We adopt the usual convention that the word category means
$(\infty,1)$-category, and all the standard categorical notions and constructions, such as functors and (co)limits, should be
understood in this context. In particular, we shall typically refer to derived completion simply as completion.

\sssec{}
We will denote by $\Pr^L_\st$ the $(\infty,1)$-category of stable presentable categories and continuous
(colimit preserving) functors.
We will denote by $\pr^\dual_\st$ the $(\infty, 1)$-category of stable presentable dualizable categories and strongly continuous functors.

\sssec{} By default, all limits of (stable presentable) categories are computed inside $\pr^L_\st$. Limits computed inside
$\pr^\dual_\st$ will be denoted by $\stackrel{\dual}{\lim}$.

\sssec{} Because we need the results of Appendix A, \cite{brav2024beilinsonparshinadelessolidalgebraic},
we work in finite type over $\Z$. It is also possible to work in finite type over a countable field
using the ultrasolid vector spaces over the field as the base category.

\sssec{} We use notation of \cite{brav2024beilinsonparshinadelessolidalgebraic} for completions. It is summarized in the next four points.

\sssec{}
Let $Z \subseteq \spec(A)$ be a closed subset defined by an ideal $I \subset A$. We will use $A^\comp_I$
to denote the (derived) completion of $A$ at the ideal $I$. 
The same goes for the completion of a (solid) module over $A$.
We note that the functor of completion $(-)^\comp_I$ at the ideal $I \subset A$ only depends on the closed subset $Z \subseteq \spec(A)$, but
not on the ideal $I$ cutting out $Z$. To emphasize that, and for other reasons, we will often call the functor of completion
at $I$ the functor of completion at $Z$ and denote it $(-)^\thicksim_Z$.

\sssec{}
More generally, for a specialization closed subset $T \subseteq \spec(A)$ (which can be thought of as a subset equal to the union
of all its subsets which are closed in $\spec(A)$), we will define a completion at $T$ and denote it $(-)^\thicksim_T$,
see section 1.4 in \cite{brav2024beilinsonparshinadelessolidalgebraic}.

\sssec{}
Similarly, for a generalization closed subset $E \subset \spec(A)$, we will define a completion at $E$ and denote it
$(-)^\thicksim_E$, see section 1.7 in \cite{brav2024beilinsonparshinadelessolidalgebraic}.
This will turn out to be equivalent to the completion
with respect to the ultrasolid analytic ring structure on the ring \[
\Ga(E, \O) \;\stackrel{\define}{=}\; \stackrel[U \supseteq E]{}{\colim} \Ga(U, \O) \;,
\] where the colimit is taken over all Zariski open subsets $U \subseteq \spec(A)$ containing $E$ and ordered by (opposite) inclusion.
See Definition 1.1.5.2 in \cite{brav2024beilinsonparshinadelessolidalgebraic}
for the definition of the ultrasolid analytic ring structure.

\sssec{}
For a prime ideal $\p \subset A$, we will denote by $\leftcomp A_\p$ the local ring $A_\p$ completed at the maximal ideal.
Similarly, for any (solid) $A$-module $M$, $\leftcomp\: M_\p$ denotes the localization of $M$ at $\p$ followed by the (derived) completion
at the maximal ideal of $A_\p$.

\sssec{}
We use Lurie's notation for categories of modules over an algebra as opposed to the traditional derived category notation. For example,
$\mod_A$ means the whole stable $(\infty, 1)$-category of modules over the algebra $A$. Likewise, $\mod_{A_\bs}$ denotes
the stable $(\infty, 1)$-category of solid $A$-modules.

This also applies to categories of quasi-coherent sheaves over a scheme: $\qcoh(X)$ means the stable $(\infty,1)$-category of
quasi-coherent complexes over $X$; $\qcoh(X_\bs)$ means the stable $(\infty,1)$-category of solid quasi-coherent sheaves over $X$.

\sssec{} For concreteness, we work in light setting of the condensed mathematics, see \cite{clausen2023analytic}, Lecture 2.
Specifically, this means that,
for a commutative algebra $A$ of finite type over $\Z$, the category of solid $A$-modules $\mod_{A_\bs}$ is compactly generated
by a single compact generator \[
\prod_\N A \;\in\; \mod^\omega_{A_\bs} \;.
\]

\sssec{}
Given an analytic ring $\mathcal{B}$, we will use both $\mathcal{B}-\mod$ and $\mod_{\mathcal{B}}$ to denote the category
of $\mathcal{B}$-modules. There should usually be no room for confusion.

\sssec{}
To make our notation less cumbersome,
given a commutative algebra object $B \in \calg(\mod_{A_\bs})$ in the symmetric monoidal category $\mod_{A_\bs}$, we will
often use $B - \mod_{A_\bs}$ (as opposed to something like $B-\mod(\mod_{A_\bs})$) to denote the category of $B$-modules inside
$\mod_{A_\bs}$.

\sssec{} Given an analytic ring $\mathcal{B}$, we will denote the category of nuclear modules over $\mathcal{B}$ by $\nuc(\mathcal{B})$,
see Lecture 8 in \cite{complex} for the definition. Given a finite type scheme $X$ and a ring $R \in \calg(\qcoh(X_\bs))$,
we will denote by $\nuc(R, X_\bs)$ the category of nuclear objects of the category $R - \mod_{\qcoh(X_\bs)}$.

\ssec{Acknowledgments}
The author is grateful to Christopher Brav for conversations about the project.
Part of the research was done in the Centre of Pure Mathematics within MIPT, grant number FSMG-2023-0013.
The author benefited from a short stay at the Shanghai Institute for Mathematical and Interdisciplinary Sciences
while working on this paper and would like to thank them for hospitality.
\section{Preliminaries}

This section contains some technical preliminaries, mostly dealing with categories of nuclear modules over the adelic rings.

\ssec{}

Here we summarize part of the discussion from \cite{efimov2025localizinginvariantsinverselimits}, section 1.5.

\sssec{}\label{sssec-nuc-defn} Let $\CC$ be a compactly generated stable symmetric monoidal category whose monoidal unit is compact.
We now recall from \cite{complex}, Lecture 8, the definition of the full subcategory $\nuc(\CC) \subseteq \CC$ of nuclear objects.
Recall that a map $f \in \hom_\CC (x, y)$ is called trace-class if it lies in the image of the natural map \[
\hom_{\CC} (1_\CC, x^\vee \otimes_{\CC} y) \;\arr\; \hom_\CC(x, y) \;.
\] An object $M \in \CC$ is called nuclear if any map $f \colon P \arr M$ from a compact object $P \in \CC^\omega$ is trace-class.
The full subcategory $\nuc(\CC) \subseteq \CC$ is defined as consisting of nuclear objects. It follows from the definition that the
subcategory of nuclear objects is closed under colimits. Moreover, by Proposition 13.13 of \cite{analytic} and
Theorem 2.39 of \cite{ramzi2024dualizablepresentableinftycategories}, the category $\nuc(\CC)$ is $\omega_1$-presentable
and dualizable. Finally, in case the subcategory of compact objects $\CC^\omega \subset \CC$ is closed under the tensor product,
the category $\nuc(\CC)$ inherits the symmetric monoidal structure from $\CC$ (e.g. follows from Proposition 13.13 of \cite{analytic}).

\sssec{} We will also make a slight use of the functor of rigidification $(-)^\rig$
(see Construction 4.75 and Theorem 4.77 of \cite{ramzi2024locallyrigidinftycategories}),
which is the right adjoint to the fully faithful inclusion $\calg^\rig \hookrightarrow \calg(\pr^L_{\st, \kappa})$ for an uncountable $\kappa$.
It is easily seen (Propositions 1.23 and 1.27 of \cite{efimov2025localizinginvariantsinverselimits})
that, under the hypothesis of section \ref{sssec-nuc-defn}, the rigidification $\CC^\rig$ is a naturally a full subcategory in $\nuc(\CC)$.
The following proposition summarizes part of the discussion from \cite{efimov2025localizinginvariantsinverselimits}, section 1.5,
which will provide a description for the category of nuclear modules over certain analytic rings.

\begin{proposition}[\cite{efimov2025localizinginvariantsinverselimits}, Propositions 1.28-1.34; \cite{anschütz2024descentsolidquasicoherentsheaves}, Lemma 3.32]\label{prop-main-gen-nuc}
Let $\DD$ be a small symmetric monoidal stable category such that, for every $P \in \DD$, the object $P^{\op, \vee} \in \ind(\DD^\op)$
is nuclear. Then the following holds: \begin{enumerate}
\item the inclusion \[
\ind(\DD^\op)^\rig \;\subseteq\; \nuc(\ind(\DD^\op))
\] is an equivalence;
\item the inclusion \[
\ind(\DD)^\rig \;\subseteq\; \ind(\DD)
\] admits a continuous symmetric monoidal right adjoint, and the composite \[
\ind(\DD) \;\arr\; \ind(\DD)^\rig \;\simeq\; \ind(\DD^\op)^\rig \;\hookrightarrow\; \ind(\DD^\op)
\] maps an object $P \in \DD$ to $P^{\op, \vee} \in \ind(\DD^\op)$;
\item the inclusion \[
\ind(\DD^\op)^\rig \;\subseteq\; \ind(\DD^\op)
\] admits a continuous right adjoint.
\end{enumerate}
\end{proposition}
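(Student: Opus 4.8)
The plan is to assemble the three assertions from the cited results of \cite{efimov2025localizinginvariantsinverselimits} and \cite{anschütz2024descentsolidquasicoherentsheaves}; the substantive work is to isolate the role of the hypothesis on $P^{\op,\vee}$ and to record how the assertions are linked. Throughout I would use the facts recalled above: for $\CC$ as in section~\ref{sssec-nuc-defn} the inclusions $\CC^\rig\hookrightarrow\nuc(\CC)\hookrightarrow\CC$ are fully faithful, $\nuc(\CC)$ is closed under colimits and (since $\DD^\op$ is symmetric monoidal, so its compacts are tensor-closed) inherits a symmetric monoidal structure, and $\CC^\rig$, $\nuc(\CC)$ are $\omega_1$-presentable. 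I would also use that a compact object is nuclear exactly when dualizable, and that $\ind(\DD^\op)$ is the $\pr^L_\st$-dual of $\ind(\DD)$, which together with compatibility of rigidification with duality accounts for the equivalence $\ind(\DD)^\rig\simeq\ind(\DD^\op)^\rig$ used in (2).

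For (1) the strategy is to show $\nuc(\ind(\DD^\op))$ is rigid and then invoke the universal property of $(-)^\rig$. The inclusion $\nuc(\ind(\DD^\op))\hookrightarrow\ind(\DD^\op)$ is symmetric monoidal and colimit preserving, so once the source is rigid the universal property factors it through $\ind(\DD^\op)^\rig$; composing with the always-available inclusion $\ind(\DD^\op)^\rig\hookrightarrow\nuc(\ind(\DD^\op))$ and using full faithfulness of both inclusions into $\ind(\DD^\op)$ exhibits them as mutually inverse. To prove rigidity I would exhibit dualizable generators: the objects $P^{\op,\vee}$, $P\in\DD$, are nuclear by hypothesis, hence lie in $\nuc(\ind(\DD^\op))$; they are dualizable there, being the duals of the compact generators $P^\op$ formed inside the rigidification; and they generate under colimits since the $P^\op$ do. Verifying these three points in detail is exactly what \cite{efimov2025localizinginvariantsinverselimits}, Propositions~1.28--1.34, does, and the first is where the hypothesis is consumed.

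For (2) I would combine (1) with the equivalence $\ind(\DD)^\rig\simeq\ind(\DD^\op)^\rig$ to identify the target of rigidification with $\nuc(\ind(\DD^\op))$. A right adjoint to $\ind(\DD)^\rig\hookrightarrow\ind(\DD)$ exists by the adjoint functor theorem; the content is continuity and symmetric monoidality. I would see this by transporting along the equivalence: let $F\colon\ind(\DD)\to\ind(\DD^\op)$ be the colimit-preserving extension of $P\mapsto P^{\op,\vee}$, which is symmetric monoidal (part of the cited results); it factors through $\nuc(\ind(\DD^\op))$ since that subcategory is closed under colimits and contains the $P^{\op,\vee}$, and one checks on compact generators that the resulting functor is right adjoint to the inclusion, whence continuity, symmetric monoidality, and the stated value on $P\in\DD$. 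Assertion (3) says the colocalization $\nuc(\ind(\DD^\op))\hookrightarrow\ind(\DD^\op)$ has a continuous right adjoint; the right adjoint exists by the adjoint functor theorem, and continuity---which is not automatic for a colocalization---holds because the inclusion preserves the relevant compact objects: the generators $P^{\op,\vee}$ are countable colimits of compacts of $\ind(\DD^\op)$ along trace-class maps, hence $\omega_1$-compact there. This is \cite{anschütz2024descentsolidquasicoherentsheaves}, Lemma~3.32.

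The main obstacle is (1): upgrading control of the generators $P^{\op,\vee}$ to the assertion that every nuclear object of $\ind(\DD^\op)$ already lies in the rigidification. Once (1) holds, the equivalence used in (2), the identification of the right adjoints, and their continuity are essentially formal; but (1) is where the asymmetric hypothesis genuinely enters and where one must reason about the whole category of nuclear objects rather than a generating family. Concretely, the task reduces to matching the present hypothesis with the one under which \cite{efimov2025localizinginvariantsinverselimits} proves Propositions~1.28--1.34.
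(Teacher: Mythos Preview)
The paper does not supply its own proof of this proposition: it is recorded with attribution to \cite{efimov2025localizinginvariantsinverselimits}, Propositions~1.28--1.34, and \cite{anschütz2024descentsolidquasicoherentsheaves}, Lemma~3.32, and the text moves directly to the ensuing corollary without argument. So there is no in-paper proof to compare your proposal against; the paper treats the statement as a black box imported from those references.

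Your sketch is a reasonable narrative of how the pieces from the cited sources fit together, and it correctly identifies (1) as the step that consumes the hypothesis. One small caution: in your discussion of (3) you argue continuity of the right adjoint by saying the generators $P^{\op,\vee}$ are $\omega_1$-compact in $\ind(\DD^\op)$; that alone does not yet show the inclusion $\nuc(\ind(\DD^\op))\hookrightarrow\ind(\DD^\op)$ preserves $\omega_1$-compacts in general, which is what you would want for the right adjoint to commute with $\omega_1$-filtered colimits. The cited Lemma~3.32 handles this, but your sketch elides the step from ``generators are $\omega_1$-compact'' to ``the inclusion preserves $\omega_1$-compacts''. Since the paper defers entirely to the references, this is a gap only relative to a self-contained proof, not relative to anything the paper itself claims to do.
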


\begin{corollary}
Under the hypothesis of the previous proposition, the category $\nuc(\ind(\DD^\op))$ is generated under colimits by objects of the form
$P^{\op, \vee}$ with $P$ varying over the objects of $\DD$.
\end{corollary}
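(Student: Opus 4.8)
The plan is to deduce the corollary directly from parts (1) and (3) of Proposition \ref{prop-main-gen-nuc}. The key point is that the objects $P^{\op,\vee}$ for $P \in \DD$ are precisely the images of the compact generators $P \in \DD \subseteq \ind(\DD)$ under the composite functor of part (2), or equivalently they are a generating family once we know the relevant adjunctions are nice.

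First I would record that $\ind(\DD^\op)$ is compactly generated by the objects of $\DD^\op$, i.e. by the $P^\op$ for $P \in \DD$. By part (3), the fully faithful inclusion $\iota \colon \ind(\DD^\op)^\rig \hookrightarrow \ind(\DD^\op)$ admits a continuous right adjoint, call it $\rho$. Since $\iota$ is fully faithful and colimit-preserving and $\rho$ is continuous, a standard argument shows that $\ind(\DD^\op)^\rig$ is generated under colimits by the objects $\rho(P^\op)$ as $P^\op$ ranges over a set of compact generators of $\ind(\DD^\op)$: indeed, for any $X \in \ind(\DD^\op)^\rig$, if $\map(\rho(P^\op), X) \simeq \map(P^\op, \iota X) \simeq 0$ for all such $P^\op$, then $\iota X \simeq 0$ hence $X \simeq 0$, so these objects generate. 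Then by part (1), $\ind(\DD^\op)^\rig \simeq \nuc(\ind(\DD^\op))$, so $\nuc(\ind(\DD^\op))$ is generated under colimits by the $\rho(P^\op)$.

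It remains to identify $\rho(P^\op)$ with $P^{\op,\vee}$. Here I would use part (2): the composite $\ind(\DD) \to \ind(\DD)^\rig \simeq \ind(\DD^\op)^\rig \hookrightarrow \ind(\DD^\op)$ sends $P$ to $P^{\op,\vee}$. Chasing through the equivalence $\ind(\DD)^\rig \simeq \ind(\DD^\op)^\rig$ (which sends the rigidification of $P$ to $\iota^{-1}(P^{\op,\vee})$, using that $P^{\op,\vee}$ lies in $\ind(\DD^\op)^\rig$ by hypothesis and part (1)) together with the fact that rigidification of $\ind(\DD)$ is computed by the right adjoint described in part (2), one sees that $P^{\op,\vee}$ is the image under $\iota$ of the rigidification of $P$. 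A small compatibility check — that the right adjoint $\rho$ of part (3) applied to $P^\op$ agrees with the rigidification of $P$ transported across the equivalence — then gives $\rho(P^\op) \simeq P^{\op,\vee}$, or rather gives directly that the $P^{\op,\vee}$ form a generating set. Alternatively, and perhaps more cleanly, I would argue that since $\nuc(\ind(\DD^\op)) \simeq \ind(\DD^\op)^\rig$ is generated by any set of objects whose images under $\iota$ generate $\ind(\DD^\op)$ modulo the colimit-closure of the essential image of $\iota$ — but in fact $\iota$ is essentially surjective after colimit completion is not quite what we want; the clean statement is the kernel argument above.

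The main obstacle I anticipate is the bookkeeping in the last step: matching the abstractly-defined right adjoint of part (3) with the concrete formula $P \mapsto P^{\op,\vee}$ coming from part (2), since these are stated via two different adjunctions (one on $\ind(\DD^\op)$, one on $\ind(\DD)$) linked by the equivalence $\ind(\DD)^\rig \simeq \ind(\DD^\op)^\rig$. Everything else is a routine generation-by-compact-generators argument. I would therefore structure the proof as: (i) the $\rho(P^\op)$ generate $\nuc(\ind(\DD^\op))$ by the kernel argument; (ii) $\rho(P^\op) \simeq P^{\op,\vee}$ by unwinding part (2) and the hypothesis that $P^{\op,\vee}$ is nuclear. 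This completes the proof.
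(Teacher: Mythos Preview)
Your overall strategy---use a continuous right adjoint to a fully faithful inclusion plus the kernel argument to produce generators---is correct, but you run it on the wrong side, and the resulting identification step (ii) is actually \emph{false}.

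Concretely, take $A$ a finite type $\Z$-algebra and $\DD = (\mod_{A_\bs}^\omega)^\op$, so $\ind(\DD^\op) = \mod_{A_\bs}$ and $\nuc(\ind(\DD^\op)) \simeq \mod_A$. Your $\rho$ is the right adjoint to the inclusion $\mod_A \hookrightarrow \mod_{A_\bs}$. For the compact generator $P^\op = \prod_\N A$, one computes $\rho(\prod_\N A) \simeq \prod_\N A$ (the discrete product in $\mod_A$), since for discrete $M$ we have $\map_{A_\bs}(M,\prod_\N A) \simeq \prod_\N \map_A(M,A)$. On the other hand $P^{\op,\vee} = \uhom_{A_\bs}(\prod_\N A, A) \simeq \bigoplus_\N A$. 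These are not isomorphic, so the ``small compatibility check'' you defer cannot be completed: $\rho(P^\op) \not\simeq P^{\op,\vee}$ in general.

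The fix is to run your kernel argument using part (2) directly, on the $\ind(\DD)$ side. The inclusion $\ind(\DD)^\rig \hookrightarrow \ind(\DD)$ has a continuous right adjoint $r$ by part (2), so exactly your argument shows $\ind(\DD)^\rig$ is generated under colimits by the objects $r(P)$ for $P \in \DD$. Part (2) then tells you explicitly that, under the equivalence $\ind(\DD)^\rig \simeq \ind(\DD^\op)^\rig$ and the inclusion into $\ind(\DD^\op)$, the object $r(P)$ goes to $P^{\op,\vee}$. Combined with part (1), this gives the corollary immediately---no identification of $\rho(P^\op)$ is needed. This is presumably why the paper states the corollary without proof.
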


\ssec{}

Fix a commutative ring $A$ of finite type over $\Z$. Recall from \cite{brav2024beilinsonparshinadelessolidalgebraic},
section 1.4.5, that whenever we have a specialization closed subset $T \subseteq \spec(A)$, we define the functor \[
(-)^\thicksim_T \colon \mod_{A_\bs} \;\arr\; \mod_{A_\bs}
\] of completion at $T$ by the formula \[
(-)^\thicksim_T \;\stackrel{\define}{=}\; \uhom_{A_\bs} (\Ga_T, -) \;,
\] where \[\begin{split}
\Ga_T &\;\simeq\; \stackrel[\substack{\text{closed } Z \subseteq \spec(A) \\ Z \subseteq T}]{}{\colim} \Ga_Z \;, \\
\Ga_Z &\;\simeq\; \fib\bigl(A \to \Ga(\spec(A) \setminus Z, \O)\bigr) \;,
\end{split}\] is the local cohomology. By Corollary A.2.7 of \cite{brav2024beilinsonparshinadelessolidalgebraic}, the functor
of completion at $T$ is symmetric monoidal if restricted to the full subcategory \[
\mod^{< +\infty}_{A_\bs} \;\subseteq\; \mod_{A_\bs}
\] of eventually connective $A_\bs$-modules.

Likewise, recall from section 1.7 in \cite{brav2024beilinsonparshinadelessolidalgebraic} that,
for a generalization closed subset $E \subseteq \spec(A)$, we have the functor of completion at $E$
\[\begin{split}
&(-)^\thicksim_E \colon \;\mod_{A_\bs} \;\arr\; \mod_{A_\bs} \;,\\
&(-)^\thicksim_E \;=\; \uhom_{A_\bs}( A^\thicksim_{E^c}/A[-1], -) \;,
\end{split}\] where $E^c = \spec(A) \setminus E$ is the specialization closed subset of $\spec(A)$ given by the complement to $E$. 
It is isomorphic to the functor \[
- \otimes_{A_\bs} \Ga(E, \O)_\bullet \;,
\] where $\Ga(E, \O)_\bullet$ denotes the ultrasolid analytic ring structure on the idempotent $A$-algebra \[
\Ga(E, \O) \;\simeq\; \stackrel[\substack{{\text{open } U \subseteq \spec(A)}\\ E \subseteq U}]{}{\colim} \Ga(U, \O) \;,
\] see section 1.7.3 in \cite{brav2024beilinsonparshinadelessolidalgebraic}.

Now we would like to study nuclear modules over the solid commutative ring $A^\thicksim_{T,E} \in \calg(\mod_{A_\bs})$ ($A$ completed
at both $T$ and $E$).
We note that it is idempotent over $A_\bs$: \[\begin{split}
A^\thicksim_{T,E} \otimes_{A_\bs} A^\thicksim_{T,E} &\;\simeq\; \bigl( A^\thicksim_E \otimes_{A_\bs} A^\thicksim_E \bigr)^\thicksim_T \;\simeq\\
&\;\simeq\; \bigl( \Ga(E, \O) \otimes_{A_\bs} \Ga(E, \O) \bigr)^\thicksim_T \;\simeq\; \\
&\;\simeq\; \bigl( \Ga(E, \O) \bigr)^\thicksim_T \;=\; A^\thicksim_{T,E} \;,
\end{split}\] where we use Corollary A.2.7 of \cite{brav2024beilinsonparshinadelessolidalgebraic} to deal with the completion at $T$.
The next lemma claims that the hypothesis of Proposition \ref{prop-main-gen-nuc} holds for the category $A^\thicksim_{T,E}-\mod_{A_\bs}$.

\begin{lemma}
Fix a specialization closed subset $T \subseteq \spec(A)$ and a generalization closed subset $E \subseteq \spec(A)$.
The object \[
\uhom_{A_\bs} \Bigl(\prod_\N A, A^\thicksim_{T,E} \Bigr) \;\simeq\; \Bigl(\bigoplus_\N A \Bigr)^\thicksim_{T,E} \;\in\; A^\thicksim_{T,E} - \mod_{A_\bs}
\] is nuclear.
\end{lemma}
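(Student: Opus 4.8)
Set $\CC := A^\thicksim_{T,E} - \mod_{A_\bs}$ and denote by $N$ the object in question; I will work with its description as $(\bigoplus_\N A)^\thicksim_{T,E}$, the stated isomorphism with $\uhom_{A_\bs}(\prod_\N A, A^\thicksim_{T,E})$ following from the light-solid identity $\uhom_{A_\bs}(\prod_\N A, A) \simeq \bigoplus_\N A$ (applied in $\CC$ to the compact generator) together with the fact that completion commutes with direct sums. The plan is to exhibit $N$ as a countable filtered colimit of \emph{dualizable} objects of $\CC$ along \emph{trace-class} transition maps, which forces $N$ to be nuclear.

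First I would record the structure of $\CC$. Since $A^\thicksim_{T,E}$ is idempotent over $A_\bs$ (as computed just above the statement), $\CC$ is the smashing localization of $\mod_{A_\bs}$ onto $A^\thicksim_{T,E}$-local objects, with localization functor $(-)^\thicksim_{T,E}$, which on eventually connective modules is base change $(-)\otimes_{A_\bs} A^\thicksim_{T,E}$. Hence $\CC$ is compactly generated by $(\prod_\N A)^\thicksim_{T,E}$, and, since $A$ is a retract of $\prod_\N A$ in $\mod_{A_\bs}$, the monoidal unit $A^\thicksim_{T,E}$ of $\CC$ is a retract of that generator and therefore compact; so $\CC$ satisfies the hypotheses of \ref{sssec-nuc-defn}. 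Crucially, every finite direct sum of copies of the unit is dualizable in $\CC$ — it is finite free over the unit — no matter whether $A^\thicksim_{T,E}$ itself is dualizable over $A_\bs$.

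Next I would write $\bigoplus_\N A = \colim_n A^{\oplus n}$ in $\mod_{A_\bs}$ along the evident inclusions, a colimit of connective solid modules, and apply $(-)^\thicksim_{T,E}$. Because on eventually connective modules this functor is base change along $A^\thicksim_{T,E}$, it preserves this colimit, giving $N \simeq \colim_n (A^\thicksim_{T,E})^{\oplus n}$ in $\CC$. Each transition map $(A^\thicksim_{T,E})^{\oplus n} \to (A^\thicksim_{T,E})^{\oplus n+1}$ is a morphism out of a dualizable object of $\CC$, hence trace-class: for dualizable $P$ one has $P^\vee \otimes_\CC Q \simeq \uhom_\CC(P, Q)$, so the defining map $\hom_\CC(1_\CC, P^\vee \otimes_\CC Q) \to \hom_\CC(P, Q)$ is an equivalence. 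It then remains to note that a countable filtered colimit of objects along trace-class maps is nuclear: given $f\colon P \to N$ with $P$ compact, $f$ factors through some stage $(A^\thicksim_{T,E})^{\oplus n}$ by compactness of $P$; postcomposing $P \to (A^\thicksim_{T,E})^{\oplus n}$ with the trace-class map to the next stage and using that trace-class maps form a two-sided ideal shows $f$ is trace-class. Thus $N \in \nuc(\CC)$.

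The only non-formal step — and the one I expect to require real care — is the interchange of $(-)^\thicksim_{T,E}$ with the colimit $\colim_n A^{\oplus n}$. The completion at a specialization closed subset is a priori the internal-hom functor $\uhom_{A_\bs}(\Ga_T, -)$ and is not obviously colimit-preserving on all of $\mod_{A_\bs}$, so to reduce it to base change on the relevant connective diagram one must invoke the idempotence of $A^\thicksim_{T,E}$ over $A_\bs$ together with the symmetric monoidality of $(-)^\thicksim_T$ on eventually connective modules (Corollary A.2.7 of \cite{brav2024beilinsonparshinadelessolidalgebraic}) and, for the $E$-part, the identification of $(-)^\thicksim_E$ with $-\otimes_{A_\bs}\Ga(E,\O)_\bullet$. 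Everything past that point is formal manipulation of trace-class maps and nuclear objects.
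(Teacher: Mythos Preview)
The step you flag as ``non-formal'' is not merely delicate --- it is false, and with it the whole approach collapses. Corollary A.2.7 of \cite{brav2024beilinsonparshinadelessolidalgebraic} asserts that $(-)^\thicksim_T$ is \emph{symmetric monoidal} on eventually connective solid modules, i.e.\ $(M\otimes_{A_\bs} N)^\thicksim_T \simeq M^\thicksim_T \otimes_{A_\bs} N^\thicksim_T$. It does \emph{not} say that $(-)^\thicksim_T$ agrees with base change $-\otimes_{A_\bs} A^\thicksim_T$, and in particular it does not say $(-)^\thicksim_T$ preserves filtered colimits. Idempotence of $A^\thicksim_{T,E}$ only tells you that the inclusion $A^\thicksim_{T,E}-\mod_{A_\bs}\hookrightarrow\mod_{A_\bs}$ has left adjoint $-\otimes_{A_\bs} A^\thicksim_{T,E}$; the functor $(-)^\thicksim_{T,E}$ is a \emph{different} functor landing in the same subcategory. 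A concrete counterexample: take $A=\Z$, $E=\spec(\Z)$, $T=V(p)$. Then $(\bigoplus_\N \Z)^{\comp}_p = \lim_n \bigoplus_\N \Z/p^n$ is the $p$-adically completed direct sum (sequences in $\prod_\N \Z_p$ tending to $0$), which strictly contains $\bigoplus_\N \Z_p$ --- the element with $i$-th coordinate $p^i$ lies in the former but not the latter. Hence $N\not\simeq\colim_n (A^\thicksim_{T,E})^{\oplus n}$, and your trace-class presentation does not exist. (The same misreading appears earlier when you justify the displayed isomorphism by ``completion commutes with direct sums''; the actual reason is the tensor--hom adjunction, which swaps $\uhom(\prod_\N A,-)$ past $\uhom(\Ga_T,-)$.)

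The paper's argument uses Corollary A.2.7 correctly and in a quite different way: it verifies nuclearity via the criterion that for the compact generator $Q=\prod_\N A^\thicksim_{T,E}$ the map $Q^\vee\otimes_{A_\bs} N \to \uhom_{A_\bs}(Q,N)$ is an equivalence. Since $Q^\vee=N$, this amounts to showing
\[
\bigl(\textstyle\bigoplus_\N A\bigr)^\thicksim_{T,E}\otimes_{A_\bs}\bigl(\textstyle\bigoplus_\N A\bigr)^\thicksim_{T,E}\;\simeq\;\bigl(\textstyle\bigoplus_{\N\times\N} A\bigr)^\thicksim_{T,E},
\]
which is exactly an instance of the symmetric monoidality statement (applied to the eventually connective module $(\bigoplus_\N A)^\thicksim_E$), together with continuity of $(-)^\thicksim_E$. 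No commutation of $T$-completion with colimits is needed.
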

\begin{proof}
It suffices to prove that the map \[
\Bigl(\bigoplus_\N A \Bigr)^\thicksim_{T,E} \otimes_{A_\bs} \Bigl(\bigoplus_\N A \Bigr)^\thicksim_{T,E} \;\arr\; \Bigl(\bigoplus_{\N\times\N} A \Bigr)^\thicksim_{T,E}
\] is an isomorphism, which immediately follows from Corollary A.2.7, \cite{brav2024beilinsonparshinadelessolidalgebraic},
combined with the fact that the functor of completion at $E$ is continuous.
\end{proof}

The same result can be obtained for the same ring $A^\thicksim_{T,E}$, but now equipped with the ultrasolid analytic ring structure,
which in this case can be constructed as the push-out \[
A^\thicksim_{T,E \: \bullet} \;\simeq\; A^\thicksim_{E \:\bullet} \otimes_{A_\bs} (A^\thicksim_{T}, A_\bs) \;.
\] Indeed, using the base change, we get \[\begin{split}
\Bigl( A^\thicksim_{E \:\bullet} \otimes_{A_\bs} (A^\thicksim_{T}, A_\bs) \Bigr) \otimes_{A_\bs} \prod_\N A &\;\simeq\; A^\thicksim_{E, \:\bullet} \otimes_{A_\bs} \Bigl( A^\thicksim_T \otimes_{A_\bs} \prod_\N A \Bigr) \;\simeq\; \\
&\;\simeq\; A^\thicksim_{E, \:\bullet} \otimes_{A_\bs} \Bigl( \prod_\N A^\thicksim_T \Bigr) \;\simeq\; \\
&\;\simeq\; \Bigl( \prod_\N A^\thicksim_T \Bigr)^\thicksim_E \;\simeq\; \\
&\;\simeq\; \prod_\N A^\thicksim_{T,E} \;.
\end{split}\]

\begin{lemma}
The object \[
\uhom_{A^\thicksim_{T,E \: \bullet}} \Bigl(\prod_\N A^\thicksim_{T,E}, A^\thicksim_{T,E} \Bigr) \;\simeq\; \Bigl(\bigoplus_\N A \Bigr)^\thicksim_{T,E} \;\in\; A^\thicksim_{T,E \:\bullet} - \mod
\] is nuclear.
\end{lemma}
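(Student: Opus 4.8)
The plan is to imitate the proof of the previous lemma, carrying out all the algebra inside $\mod_{A^\thicksim_{T,E\:\bullet}}$ instead of inside $\mod_{A_\bs}$. By the base-change computation recorded just above, $\mod_{A^\thicksim_{T,E\:\bullet}}$ is compactly generated by the free module $P := A^\thicksim_{T,E\:\bullet}\otimes_{A_\bs}\prod_\N A \simeq \prod_\N A^\thicksim_{T,E}$ on the compact generator $\prod_\N A$ of $\mod_{A_\bs}$, and the object to be shown nuclear is exactly its dual $P^\vee \simeq \uhom_{A_\bs}(\prod_\N A, A^\thicksim_{T,E}) \simeq (\bigoplus_\N A)^\thicksim_{T,E}$; the last identification holds because both completions commute with $\uhom_{A_\bs}(\prod_\N A,-)$ --- the $T$-completion since it is of the form $\uhom_{A_\bs}(\Ga_T,-)$, and the $E$-completion since it is continuous and $\prod_\N A$ is compact. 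As $P$ generates under colimits and the functors $P' \mapsto \uhom_{A^\thicksim_{T,E\:\bullet}}(P',-)$ and $P' \mapsto (P')^\vee \otimes_{A^\thicksim_{T,E\:\bullet}} (-)$ carry cofiber sequences and retracts of compact objects to (co)fiber sequences and retracts, it suffices, by the criterion for nuclearity recalled in section 1.5 of \cite{efimov2025localizinginvariantsinverselimits}, to verify that the canonical map
\[
P^\vee \otimes_{A^\thicksim_{T,E\:\bullet}} P^\vee \;\arr\; \uhom_{A^\thicksim_{T,E\:\bullet}}(P, P^\vee) \;\simeq\; \bigl( P \otimes_{A^\thicksim_{T,E\:\bullet}} P \bigr)^\vee
\]
is an equivalence.

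The target I would compute as follows: by the free-module adjunction and the identification of $P^\vee$, $(P \otimes_{A^\thicksim_{T,E\:\bullet}} P)^\vee \simeq \uhom_{A_\bs}(\prod_\N A, (\bigoplus_\N A)^\thicksim_{T,E})$, and pulling the two completions back out through $\uhom_{A_\bs}(\prod_\N A,-)$ together with the facts that this functor commutes with $\bigoplus_\N$ (again since $\prod_\N A$ is compact) and that $\uhom_{A_\bs}(\prod_\N A, A) \simeq \bigoplus_\N A$, the target becomes $(\bigoplus_{\N \times \N} A)^\thicksim_{T,E}$. For the source, note that the underlying ring $A^\thicksim_{T,E}$ of $A^\thicksim_{T,E\:\bullet}$ is idempotent over $A_\bs$, so the completed tensor $-\otimes_{A^\thicksim_{T,E\:\bullet}}-$ of two modules of $\mod_{A^\thicksim_{T,E\:\bullet}}$ is obtained by forming $-\otimes_{A_\bs}-$ and then applying the (symmetric monoidal) localization $\mod_{A_\bs}\to\mod_{A^\thicksim_{T,E\:\bullet}}$; but the computation of the previous lemma already shows $(\bigoplus_\N A)^\thicksim_{T,E} \otimes_{A_\bs} (\bigoplus_\N A)^\thicksim_{T,E} \simeq (\bigoplus_{\N\times\N} A)^\thicksim_{T,E}$, which lies in $\mod_{A^\thicksim_{T,E\:\bullet}}$, so the localization is already applied and the source equals $(\bigoplus_{\N\times\N} A)^\thicksim_{T,E}$ as well. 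A diagram chase then identifies the resulting comparison with the canonical map above.

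The main obstacle is the one genuinely new point relative to the previous lemma, namely the compatibility of the completed tensor product $-\otimes_{A^\thicksim_{T,E\:\bullet}}-$ and the internal hom $\uhom_{A^\thicksim_{T,E\:\bullet}}(-,-)$ with their $\mod_{A_\bs}$-counterparts --- i.e.\ that, on modules that (together with the relevant $\mod_{A_\bs}$-outputs) already lie in $\mod_{A^\thicksim_{T,E\:\bullet}}$, these operations may be computed in $\mod_{A_\bs}$. Setting this up cleanly uses only the pushout presentation $A^\thicksim_{T,E\:\bullet} \simeq A^\thicksim_{E\:\bullet} \otimes_{A_\bs} (A^\thicksim_{T}, A_\bs)$ recorded above and the fact that the localization into $\mod_{A^\thicksim_{T,E\:\bullet}}$ is symmetric monoidal; once it is in place, the argument needs no input beyond Corollary A.2.7 of \cite{brav2024beilinsonparshinadelessolidalgebraic} and the continuity and idempotence of the $E$-completion, exactly as in the previous lemma.
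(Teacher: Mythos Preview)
Your proposal is correct and follows essentially the same approach as the paper, whose proof consists of the single sentence ``The same argument applies.'' You have simply spelled out what that means: reduce nuclearity to the comparison $P^\vee \otimes P^\vee \to (P\otimes P)^\vee$, identify both sides with $(\bigoplus_{\N\times\N}A)^\thicksim_{T,E}$, and invoke the computation already carried out in the previous lemma together with the observation that the tensor product over $A^\thicksim_{T,E\:\bullet}$ agrees with the one over $A_\bs$ here because the result already lies in $\mod_{A^\thicksim_{T,E\:\bullet}}$.
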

\begin{proof}
The same argument applies.
\end{proof}

The following two corollaries now follow directly from Proposition \ref{prop-main-gen-nuc}. 
\begin{corollary}\label{cor-nuc-of-a-completion}
\begin{enumerate}
\item The inclusion \[
\Bigl(A^\thicksim_{T,E} - \mod_{A_\bs} \Bigr)^\rig \;\subseteq\; \nuc(A^\thicksim_{T,E}, A_\bs)
\] is an equivalence. The full subcategory \[
\nuc(A^\thicksim_{T,E}, A_\bs) \;\subseteq\; A^\thicksim_{T,E} - \mod_{A_\bs}
\] is generated under colimits by the object \[
\Bigl(\bigoplus_\N A \Bigr)^\thicksim_{T,E} \;.
\]
\item The inclusion \[
\Bigl(A^\thicksim_{T,E \:\bullet} - \mod \Bigr)^\rig \;\subseteq\; \nuc(A^\thicksim_{T,E \:\bullet})
\] is an equivalence. The full subcategory \[
\nuc(A^\thicksim_{T,E \:\bullet}) \;\subseteq\; A^\thicksim_{T,E \:\bullet} - \mod
\] is generated under colimits by the object \[
\Bigl(\bigoplus_\N A \Bigr)^\thicksim_{T,E} \;.
\]
\end{enumerate}
In particular, the functor \[
A^\thicksim_{T,E} - \mod_{A_\bs} \;\arr\; A^\thicksim_{T,E \:\bullet} - \mod
\] induces an equivalence \[
\nuc(A^\thicksim_{T,E}, A_\bs) \;\simeq\; \nuc(A^\thicksim_{T,E \:\bullet}) \;.
\]
\end{corollary}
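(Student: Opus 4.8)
The plan is to deduce all three assertions from Proposition \ref{prop-main-gen-nuc}, applied in case (1) to $\CC := A^\thicksim_{T,E}-\mod_{A_\bs}$ and in case (2) to $\CC := A^\thicksim_{T,E \:\bullet}-\mod$, with $\DD := (\CC^\omega)^\op$ in both cases, so that $\ind(\DD^\op) \simeq \ind(\CC^\omega) \simeq \CC$ and, for $P = Q^\op$ with $Q \in \CC^\omega$, the object $P^{\op,\vee} \in \ind(\DD^\op) = \CC$ is the internal mapping object $\umap_\CC(Q, 1_\CC)$. First one records that both categories $\CC$ are compactly generated symmetric monoidal with $\CC^\omega$ closed under tensor product: in case (1) this is because $A^\thicksim_{T,E}$ is idempotent over $A_\bs$ (verified above), so that $A^\thicksim_{T,E}-\mod_{A_\bs}$ is a smashing colocalization of $\mod_{A_\bs}$ whose compact objects form the thick subcategory generated by $Q_0 := \prod_\N A \otimes_{A_\bs} A^\thicksim_{T,E}$; in case (2) the module category of the ultrasolid analytic ring is compactly generated by $Q_0 := A^\thicksim_{T,E \:\bullet} \otimes_{A_\bs} \prod_\N A \simeq \prod_\N A^\thicksim_{T,E}$ (the last identification being the base-change computation recorded above), with $\CC^\omega$ again its thick closure. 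Closure of $\CC^\omega$ under tensor follows since $Q_0 \otimes Q_0$ is of the same shape.

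To invoke Proposition \ref{prop-main-gen-nuc} one must check that $\umap_\CC(Q, 1_\CC)$ is nuclear for every $Q \in \CC^\omega$. For the generating object, the internal form of the free--forgetful adjunction gives $\umap_\CC(Q_0, 1_\CC) \simeq \umap_{A_\bs}(\prod_\N A, A^\thicksim_{T,E}) \simeq (\bigoplus_\N A)^\thicksim_{T,E}$ in case (1), and $\umap_\CC(Q_0, 1_\CC) \simeq \umap_{A^\thicksim_{T,E \:\bullet}}(\prod_\N A^\thicksim_{T,E}, A^\thicksim_{T,E}) \simeq (\bigoplus_\N A)^\thicksim_{T,E}$ in case (2); both are nuclear by the two lemmas above. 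Since $\umap_\CC(-, 1_\CC)$ is exact and $\CC^\omega$ is the thick subcategory generated by $Q_0$, every $\umap_\CC(Q, 1_\CC)$ lies in the thick subcategory of $\CC$ generated by $(\bigoplus_\N A)^\thicksim_{T,E}$; as the nuclear objects form a thick subcategory of $\CC$ (closed under colimits by \ref{sssec-nuc-defn}, hence under finite colimits and shifts, and under retracts because the trace-class maps form a two-sided ideal), the hypothesis of Proposition \ref{prop-main-gen-nuc} holds. Its part (1) then yields $\CC^\rig \simeq \nuc(\CC)$, i.e. the equivalences $(A^\thicksim_{T,E}-\mod_{A_\bs})^\rig \simeq \nuc(A^\thicksim_{T,E}, A_\bs)$ and $(A^\thicksim_{T,E \:\bullet}-\mod)^\rig \simeq \nuc(A^\thicksim_{T,E \:\bullet})$, and the Corollary following Proposition \ref{prop-main-gen-nuc} shows $\nuc(\CC)$ is generated under colimits by the objects $P^{\op,\vee}$; since these all lie in the thick closure of $(\bigoplus_\N A)^\thicksim_{T,E}$ and include it, this colimit-closure equals that of the single object $(\bigoplus_\N A)^\thicksim_{T,E}$. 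This proves (1) and (2).

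For the concluding equivalence, the functor $\Phi \colon A^\thicksim_{T,E}-\mod_{A_\bs} \to A^\thicksim_{T,E \:\bullet}-\mod$ is the reflection onto the full reflective subcategory of analytically complete modules; being symmetric monoidal and colimit-preserving it restricts to a colimit-preserving functor $\nuc(A^\thicksim_{T,E}, A_\bs) \to \nuc(A^\thicksim_{T,E \:\bullet})$ which carries the generator $(\bigoplus_\N A)^\thicksim_{T,E}$ to the generator $(\bigoplus_\N A)^\thicksim_{T,E}$: that object already lies in the reflective subcategory, being an internal mapping object of the target by the second lemma, so $\Phi$ fixes it and the reflection unit on it is an equivalence; consequently $\Phi$ induces an equivalence on its endomorphism spectrum (computed the same way in $\CC$ and in $\CC'$, since $(\bigoplus_\N A)^\thicksim_{T,E}$ is analytically complete). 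Both sides being $\omega_1$-presentable by \ref{sssec-nuc-defn} and generated under colimits by this object, each is recovered as $\ind_{\omega_1}$ of the idempotent-complete closure of the generator under $\omega_1$-small colimits, and the plan is to conclude that $\Phi$ restricts to an equivalence of these closures. This last step is the main obstacle: it requires confirming that $(\bigoplus_\N A)^\thicksim_{T,E}$ is $\omega_1$-compact in each category of nuclear modules and controlling which colimits survive the passage from solid to ultrasolid modules (equivalently, that $\Phi$ is fully faithful on the $\omega_1$-compact part), after which the equivalence of $\nuc$'s is formal; everything preceding it is a direct consequence of Proposition \ref{prop-main-gen-nuc} and the two lemmas.
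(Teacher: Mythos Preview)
Your treatment of (1) and (2) is correct and is exactly the paper's argument: verify the hypothesis of Proposition~\ref{prop-main-gen-nuc} for $\DD=(\CC^\omega)^\op$ using the two lemmas, then read off the rigidification-equals-nuclear statement and the single generator $(\bigoplus_\N A)^\thicksim_{T,E}$ from the proposition and its corollary.

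The gap you flag in the ``in particular'' clause is real for the route you chose (via $\omega_1$-compactness of the generator and an $\ind_{\omega_1}$ reconstruction), but it evaporates if you work with the right adjoint instead of $\Phi$. The point is that $\Phi$ preserves compact objects: it sends the compact generator $(\prod_\N A)\otimes_{A_\bs}A^\thicksim_{T,E}$ of $A^\thicksim_{T,E}-\mod_{A_\bs}$ to $A^\thicksim_{T,E\:\bullet}\otimes_{A_\bs}\prod_\N A\simeq\prod_\N A^\thicksim_{T,E}$, the compact generator of $A^\thicksim_{T,E\:\bullet}-\mod$, by the base-change computation recorded just before the second lemma. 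Hence the fully faithful right adjoint $R$ is continuous. Now $R$ sends $G:=(\bigoplus_\N A)^\thicksim_{T,E}$ to itself (this is the content of the second lemma: $G$ already lies in $A^\thicksim_{T,E\:\bullet}-\mod$), and since $\nuc(A^\thicksim_{T,E\:\bullet})$ is generated under colimits by $G$, continuity of $R$ forces $R(\nuc(A^\thicksim_{T,E\:\bullet}))\subseteq\nuc(A^\thicksim_{T,E},A_\bs)$. The restricted functor is then continuous, fully faithful, and its essential image contains $G$ and is closed under colimits, hence is all of $\nuc(A^\thicksim_{T,E},A_\bs)$. So $R$ restricts to an equivalence, with inverse given by the restriction of $\Phi$. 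No $\omega_1$-compactness check is needed.
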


\begin{corollary}\label{cor-nuc-is-retract}
The inclusion \[
\nuc(A^\thicksim_{T,E}, A_\bs) \;\hookrightarrow\; A^\thicksim_{T,E} - \mod_{A_\bs}
\] admits a continuous $\nuc(A^\thicksim_{T,E}, A_\bs)$-linear right adjoint.
\end{corollary}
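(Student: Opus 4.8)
The plan is to first check that the inclusion $\iota\colon\nuc(A^\thicksim_{T,E},A_\bs)\hookrightarrow A^\thicksim_{T,E}-\mod_{A_\bs}$ is symmetric monoidal (so that the statement is meaningful), then quote Proposition \ref{prop-main-gen-nuc} for the existence of a continuous right adjoint, and finally deduce linearity from the rigidity of the source. Write $\CC:=A^\thicksim_{T,E}-\mod_{A_\bs}$. First I would record that $\CC$ is compactly generated by $\prod_\N A^\thicksim_{T,E}\simeq A^\thicksim_{T,E}\otimes_{A_\bs}\prod_\N A$, that its unit $A^\thicksim_{T,E}$ is compact (being a retract of $\prod_\N A^\thicksim_{T,E}$), and that $\CC^\omega$ is closed under $\otimes_{A^\thicksim_{T,E}}$: by the base-change computations preceding the Lemmas above the tensor square of this generator is $\prod_{\N\times\N}A^\thicksim_{T,E}$, again compact, and every compact object is built from the generator by finite colimits and retracts. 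By \ref{sssec-nuc-defn} this makes $\iota$ symmetric monoidal, so $\CC\in\calg(\nuc(\CC)-\mod(\pr^L_\st))$ and asking for a $\nuc(\CC)$-linear right adjoint makes sense.

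Next I would invoke Proposition \ref{prop-main-gen-nuc}. The Lemmas above verify its hypothesis for the small symmetric monoidal stable category $\DD$ with $\ind(\DD^\op)\simeq\CC$; equivalently, by Corollary \ref{cor-nuc-of-a-completion}, $\nuc(\CC)\simeq\CC^\rig\simeq\ind(\DD^\op)^\rig$. Part (1) of the proposition identifies $\iota$ with the inclusion $\ind(\DD^\op)^\rig\subseteq\ind(\DD^\op)$, and part (3) says precisely that this inclusion admits a continuous right adjoint $G$. This already gives the existence and continuity asserted in the corollary.

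It remains to upgrade $G$ to a $\nuc(\CC)$-linear functor. Since $\nuc(\CC)\simeq\CC^\rig$ is rigid, every $M\in\nuc(\CC)$ is dualizable, and since $\iota$ is symmetric monoidal $\iota(M)$ is dualizable in $\CC$ with dual $\iota(M^\vee)$. Then for $N\in\CC$ and any $K\in\nuc(\CC)$ one computes, naturally in $K$,
\[
\map_{\nuc(\CC)}\!\bigl(K,\,G(\iota M\otimes_\CC N)\bigr)\;\simeq\;\map_\CC\!\bigl(\iota K\otimes_\CC\iota(M^\vee),\,N\bigr)\;\simeq\;\map_{\nuc(\CC)}\!\bigl(K,\,M\otimes_{\nuc(\CC)}GN\bigr),
\]
using the $(\iota,G)$-adjunction, dualizability of $\iota M$, and monoidality of $\iota$; hence $G(\iota M\otimes_\CC N)\simeq M\otimes_{\nuc(\CC)}GN$ naturally, which is the projection formula exhibiting $G$ as $\nuc(\CC)$-linear (the higher coherences follow identically, or one simply cites the standard fact that a continuous right adjoint of a symmetric monoidal functor out of a rigid category is automatically linear over that category). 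I do not anticipate a real obstacle: the only points needing care are the closure of $\CC^\omega$ under tensor product — handled by the product computations already in the text — and the linearity of $G$, where the rigidity of $\nuc(\CC)$ does all the work; the rest is a direct appeal to Proposition \ref{prop-main-gen-nuc}.
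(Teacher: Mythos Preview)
Your overall strategy matches the paper's: invoke Proposition~\ref{prop-main-gen-nuc}(3) for existence and continuity of the right adjoint, then use rigidity of $\nuc(\CC)$ to upgrade lax linearity to strict linearity. The paper does exactly this, citing \cite{GR1}, Lemma~9.3.6, for the last step.

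However, your explicit projection-formula computation rests on the claim ``since $\nuc(\CC)\simeq\CC^\rig$ is rigid, every $M\in\nuc(\CC)$ is dualizable,'' and this is false. In the presentable setting of this paper (Clausen--Scholze, Efimov, Ramzi, Gaitsgory--Rozenblyum), \emph{rigid} does not mean that all objects are dualizable; for instance $\nuc(A_\bs)\simeq\mod_A$ is rigid in this sense, yet a non-perfect $A$-module is not dualizable. So the chain of isomorphisms you write down is not justified for general $M\in\nuc(\CC)$, and the argument as written does not prove linearity.

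The fix is exactly the parenthetical you append at the end: the right adjoint of a symmetric monoidal functor is automatically lax linear over the source, and over a rigid (in the presentable sense) category any lax linear functor is strict---this is \cite{GR1}, Lemma~9.3.6, and is what the paper quotes. So delete the dualizability computation and keep only that citation; then your proof coincides with the paper's. Your preliminary verification that $\CC^\omega$ is closed under the tensor product (hence $\iota$ is symmetric monoidal) is a useful elaboration that the paper leaves implicit.
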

\begin{proof}
The existence and continuity follow immediately from the third claim of Proposition \ref{prop-main-gen-nuc}.
Because the inclusion is symmetric monoidal, the right adjoint is lax $\nuc(A^\thicksim_{T,E}, A_\bs)$-linear,
but any lax $\nuc(A^\thicksim_{T,E}, A_\bs)$-linear functor must be strict by the rigidity of $\nuc(A^\thicksim_{T,E}, A_\bs)$,
see \cite{GR1}, Lemma 9.3.6.
\end{proof}

\begin{corollary}\label{cor-nuc-of-comp-is-idempotent}
The algebra \[
\nuc(A^\thicksim_{T,E}, A_\bs) \;\in\; \calg(\pr^{L}_\st)
\] is idempotent over $\nuc(A_\bs)$.
\end{corollary}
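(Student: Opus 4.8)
The plan is to prove that the multiplication functor
\[
\mu\colon \nuc(A^\thicksim_{T,E}, A_\bs)\otimes_{\nuc(A_\bs)}\nuc(A^\thicksim_{T,E}, A_\bs) \;\arr\; \nuc(A^\thicksim_{T,E}, A_\bs)
\]
is an equivalence. First I would record the $\nuc(A_\bs)$-algebra structure on $\nuc(A^\thicksim_{T,E}, A_\bs)$: the base-change functor $\mod_{A_\bs}\to A^\thicksim_{T,E}-\mod_{A_\bs}$ is symmetric monoidal and colimit-preserving and sends the nuclear generator $\bigoplus_\N A$ of $\mod_{A_\bs}$ to $\bigl(\bigoplus_\N A\bigr)^\thicksim_{T,E}$, which generates $\nuc(A^\thicksim_{T,E}, A_\bs)$ under colimits by Corollary \ref{cor-nuc-of-a-completion}; hence it carries nuclear objects to nuclear objects and restricts to a symmetric monoidal colimit-preserving functor $\nuc(A_\bs)\to\nuc(A^\thicksim_{T,E}, A_\bs)$, which is the structure map. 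Since $A^\thicksim_{T,E}$ is idempotent over $A_\bs$, the category $\mathcal{T}:=A^\thicksim_{T,E}-\mod_{A_\bs}$ is idempotent over $\mod_{A_\bs}$ in $\calg(\pr^L_\st)$; in particular the multiplication $\mu'\colon\mathcal{T}\otimes_{\mod_{A_\bs}}\mathcal{T}\to\mathcal{T}$ is an equivalence.

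Essential surjectivity of $\mu$ follows from the generators. By Corollary \ref{cor-nuc-of-a-completion} the source is generated under colimits by $g\boxtimes g$, with $g:=\bigl(\bigoplus_\N A\bigr)^\thicksim_{T,E}$. The monoidal structure of $\nuc(A^\thicksim_{T,E}, A_\bs)$ is inherited from $\mathcal{T}$, so $\mu(g\boxtimes g)=g\otimes_{A^\thicksim_{T,E}}g$; as $A^\thicksim_{T,E}$ is idempotent over $A_\bs$ and $g$ is an $A^\thicksim_{T,E}$-module, this coincides with $g\otimes_{A_\bs}g$, which by the isomorphism used in the proof of the first lemma above is $\bigl(\bigoplus_{\N\times\N}A\bigr)^\thicksim_{T,E}\simeq g$. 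Hence the essential image of the colimit-preserving functor $\mu$ contains a generator, so $\mu$ is essentially surjective.

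The crux is full faithfulness. The fully faithful symmetric monoidal inclusion $\iota\colon\nuc(A^\thicksim_{T,E}, A_\bs)\hookrightarrow\mathcal{T}$ fits into a commuting square $\iota\circ\mu\simeq\mu'\circ j$, where $j\colon\nuc(A^\thicksim_{T,E}, A_\bs)\otimes_{\nuc(A_\bs)}\nuc(A^\thicksim_{T,E}, A_\bs)\to\mathcal{T}\otimes_{\mod_{A_\bs}}\mathcal{T}$ is the canonical comparison functor; since $\mu'$ is an equivalence and $\iota$ is fully faithful, $\mu$ is fully faithful if and only if $j$ is. Now $j$ factors as $\iota\otimes_{\nuc(A_\bs)}\iota$ followed by the base-change functor $q$ from the $\nuc(A_\bs)$-relative to the $\mod_{A_\bs}$-relative tensor product. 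The first factor is fully faithful: by Corollary \ref{cor-nuc-is-retract} the inclusion $\iota$ admits a continuous $\nuc(A_\bs)$-linear right adjoint $\rho$ with $\rho\iota\simeq\id$, so $\rho\otimes_{\nuc(A_\bs)}\rho$ is a continuous right adjoint to $\iota\otimes_{\nuc(A_\bs)}\iota$ satisfying $(\rho\otimes\rho)\circ(\iota\otimes\iota)\simeq\id$. The hard part — which is genuinely the content of the corollary — is that $q$, while not fully faithful in general, becomes fully faithful after restriction to the subcategory generated under colimits by $g\boxtimes g$; equivalently, that rigidification preserves the relative tensor product at hand, i.e. that the canonical map $\nuc(A^\thicksim_{T,E}, A_\bs)\otimes_{\nuc(A_\bs)}\nuc(A^\thicksim_{T,E}, A_\bs)\to\bigl(\mathcal{T}\otimes_{\mod_{A_\bs}}\mathcal{T}\bigr)^\rig$ is an equivalence. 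To prove this I would use the rigidity of $\nuc(A_\bs)$ and $\nuc(A^\thicksim_{T,E}, A_\bs)$ to reduce it to a comparison of the mapping spectra out of $g\boxtimes g$, where on the $\mathcal{T}$-side one computes $\uhom_{A_\bs}(g,g)\simeq\prod_\N g$ and on the other side the relative internal hom is evaluated using once more that $g\otimes_{A^\thicksim_{T,E}}g\simeq g$.
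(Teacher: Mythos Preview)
Your overall strategy matches the paper's: prove $\mu$ is essentially surjective and fully faithful, reducing the latter to full faithfulness of the comparison $j\colon\nuc(A^\thicksim_{T,E},A_\bs)\otimes_{\nuc(A_\bs)}\nuc(A^\thicksim_{T,E},A_\bs)\to\mathcal{T}\otimes_{\mod_{A_\bs}}\mathcal{T}$. Essential surjectivity is fine, and so is your argument that $\iota\otimes_{\nuc(A_\bs)}\iota$ is fully faithful via the retraction $\rho\otimes\rho$. The gap is in your treatment of the base-change functor $q$. You call it ``the hard part'' but do not actually prove anything: your ``equivalent'' reformulation---that the map to $(\mathcal{T}\otimes_{\mod_{A_\bs}}\mathcal{T})^\rig$ is an equivalence---is circular, since $\mathcal{T}\otimes_{\mod_{A_\bs}}\mathcal{T}\simeq\mathcal{T}$ and $\mathcal{T}^\rig\simeq\nuc(A^\thicksim_{T,E},A_\bs)$ by Corollary~\ref{cor-nuc-of-a-completion}, so this is literally the statement that $\mu$ is an equivalence. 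And the mapping-spectra sketch you close with would require computing internal homs in the category $\nuc(A^\thicksim_{T,E},A_\bs)\otimes_{\nuc(A_\bs)}\nuc(A^\thicksim_{T,E},A_\bs)$ itself, which is precisely the category you are trying to identify.

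What you are missing is the \emph{second} retraction. The paper also uses that $\nuc(A_\bs)\hookrightarrow\mod_{A_\bs}$ admits a continuous $\nuc(A_\bs)$-linear right adjoint (this is Corollary~\ref{cor-nuc-is-retract} in the case of trivial $T$ and $E$). Invoking the linearity of \emph{both} right adjoints---on the modules and on the base---the paper concludes directly that the entire comparison $j$, which changes the modules and the base simultaneously, is a retract inclusion in $\pr^L_\st$, hence fully faithful. Once you bring in the retraction on the base as well, the problem with $q$ disappears; there is no separate hard step.
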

\begin{proof}
By Corollary \ref{cor-nuc-is-retract}, the category $\nuc(A^\thicksim_{T,E}, A_\bs)$
is a retract of $A^\thicksim_{T,E} - \mod_{A_\bs}$ in $\pr^L_\st$. Likewise, the category $\nuc(A_\bs)$ is a retract
of $\mod_{A_\bs}$ in $\pr^L_\st$. Moreover, by the linearity of the right adjoints, the tensor product \[
\nuc(A^\thicksim_{T,E}, A_\bs) \otimes_{\nuc(A_\bs)} \nuc(A^\thicksim_{T,E}, A_\bs)
\] is also a retract of \[
A^\thicksim_{T,E} - \mod_{A_\bs} \otimes_{\mod_{A_\bs}} A^\thicksim_{T,E} - \mod_{A_\bs} \;\simeq\; \bigl( A^\thicksim_{T,E} \otimes_{A_\bs} A^\thicksim_{T,E} \bigr) - \mod_{A_\bs} \;\simeq\; A^\thicksim_{T,E} - \mod_{A_\bs} \;,
\] where we use that the algebra $A^\thicksim_{T,E}$ is idempotent over $A_\bs$.
Then it follows that the multiplication map \[
\nuc(A^\thicksim_{T,E}, A_\bs) \otimes_{\nuc(A_\bs)} \nuc(A^\thicksim_{T,E}, A_\bs) \;\arr\; \nuc(A^\thicksim_{T,E}, A_\bs)
\] is fully faithful. But it is also essentially surjective for obvious reasons, and the claim follows.
\end{proof}

\ssec{Nuclear modules over the adelic rings}
In this section, we record some basic technical results about nuclear modules over the adelic rings.
Let $X$ be a scheme of finite type over $\Z$. Let $d$ denote its dimension.
Recall from \cite{brav2024beilinsonparshinadelessolidalgebraic},
section 3.2, the skeletal filtration \[
\emptyset = S_{-1} \subseteq S_0 \subseteq \ldots \subseteq S_d = X \;,
\] which is a filtration on $X$ by specialization closed subsets \[
S_k \;=\; \{ \p \in X : \dim \overline{\{\p\}} \le k \} \;\subseteq\; X \;.
\] Associated with the skeletal filtration, we have the solid adelic rings \[
L_k \O_X \;=\; \bigl(\O_X\bigr)^\thicksim_{S_k, S^c_{k-1}} \;\simeq\; \prod_{\substack{\p \in X \\ \dim \overline{\{\p\}} = k}} \leftcomp\: \O_{X, \p}
\] and \[\begin{split}
L_{k_0} \ldots L_{k_m} \O_X &\;=\; \Bigl( \bigl(\O_X\bigr)^\thicksim_{S_{k_0}} \otimes_{X_\bs} L_{k_1} \ldots L_{k_m} \O_X \Bigr)^\thicksim_{S^c_{k_0 - 1}} \;\simeq\; \\
&\empty \\
&\;\simeq\; \prod_{\substack{\p_m \in X \\ \dim \overline{\{\p_m\}} = k_m}} \leftcomp\: (\ldots \prod_{\substack{\p_1\in \overline{\{\p_2\}}\\\dim \overline{\{\p_1\}} = k_1}}\leftcomp\:(\prod_{\substack{\p_0 \in \overline{\{\p_1\}} \\ \dim \overline{\{\p_0\}} = k_0}} \leftcomp\:\O_{X,\p_0})_{\p_1} \ldots )_{\p_m} \;\simeq\; \\
&\empty \\
&\;\simeq\; L_{k_0} \O_X \otimes_{X_\bs} \ldots \otimes_{X_\bs} L_{k_m} \O_X \;,
\end{split}\] which all are idempotent commutative algebras in $\qcoh(X_\bs)$, see \cite{brav2024beilinsonparshinadelessolidalgebraic},
Theorem 3.2.1, Proposition 4.2.1, and Corollary 4.2.2.

\sssec{}\label{sssec-adelic-ring-restricts}
For a Zariski open embedding $j \colon U \hookrightarrow X$, we have a symmetric monoidal restriction functor \[
j^* \colon \qcoh(X_\bs) \;\arr\; \qcoh(U_\bs) \;,
\] which preserves all limits and colimits, and in particular, maps an adelic ring of $X$ into the corresponding adelic ring of $U$: \[
j^* L_{k_0} \ldots L_{k_m} \O_X \;\simeq\; \prod_{\substack{\p_m \in U \\ \dim \overline{\{\p_m\}} = k_m}} \leftcomp\: (\ldots \prod_{\substack{\p_1\in \overline{\{\p_2\}} \cap U \\\dim \overline{\{\p_1\}} = k_1}}\leftcomp\:(\prod_{\substack{\p_0 \in \overline{\{\p_1\}} \cap U \\ \dim \overline{\{\p_0\}} = k_0}} \leftcomp\:\O_{U,\p_0})_{\p_1} \ldots )_{\p_m} \;.
\] We are going to use that observation in the proof of the following proposition, which checks that the category \[
L_{k_0} \ldots L_{k_m} \O_X - \mod_{\qcoh(X_\bs)}
\] satisfies the hypothesis of Proposition \ref{prop-main-gen-nuc}.

\begin{proposition}\label{prop-nuc-adelic}
Let $X$ be a scheme of finite type over $\Z$. For any compact object \[
P \;\in\; \qcoh(X_\bs)^\omega \;, 
\] the dual \[
\uhom_{X_\bs} \bigl( P, L_{k_0}\ldots L_{k_m} \O_X )
\] is a nuclear $L_{k_0}\ldots L_{k_m}\O_X$-module.
\end{proposition}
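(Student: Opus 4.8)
The plan is to reduce the statement to the affine case treated by the Lemmas of the previous section, using the local-to-global structure of the adelic rings and the compatibility of everything with Zariski restriction recorded in \ref{sssec-adelic-ring-restricts}. Recall that to verify the hypothesis of Proposition \ref{prop-main-gen-nuc} for the category $L_{k_0}\ldots L_{k_m}\O_X - \mod_{\qcoh(X_\bs)}$, writing $\DD$ for the small symmetric monoidal stable category of compact objects of this module category (which is generated by $L_{k_0}\ldots L_{k_m}\O_X \otimes_{X_\bs} P$ with $P \in \qcoh(X_\bs)^\omega$, since $\qcoh(X_\bs)$ is compactly generated and the adelic ring is idempotent hence the base-change functor sends compacts to a set of compact generators), it suffices to show that $(L_{k_0}\ldots L_{k_m}\O_X \otimes_{X_\bs} P)^{\op,\vee}$ is nuclear in $\ind(\DD^\op)$. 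By adjunction this object is $\umap_{X_\bs}(P, L_{k_0}\ldots L_{k_m}\O_X)$ viewed as an object of the module category; so the two formulations of the Proposition coincide, and it is enough to prove it for $P = \prod_\N \O_X$ ranging over a single compact generator after Zariski-localizing, or more honestly for a set of compact generators of $\qcoh(X_\bs)$.

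First I would choose a finite affine open cover $X = \bigcup_i U_i$ with $U_i = \spec(A_i)$, and recall that $\qcoh(X_\bs)^\omega$ is generated (under finite colimits and retracts) by the objects $j_{i,!}(\prod_\N A_i)$, where $j_i \colon U_i \hookrightarrow X$; more precisely, by the projection formula and the fact that $j_i^*$ is symmetric monoidal and preserves limits and colimits, $\umap_{X_\bs}(j_{i,!}M, N) \simeq j_{i,*}\umap_{U_{i,\bs}}(M, j_i^* N)$, and $j_{i,*}$ is the right adjoint to the symmetric monoidal $j_i^*$. The key reduction is then: nuclearity of $\umap_{X_\bs}(P, R)$ as an $R$-module, for $R = L_{k_0}\ldots L_{k_m}\O_X$, can be checked by testing trace-class-ness of maps out of compact generators, and using the cover one reduces to showing that $j_{i,*}\bigl(\bigoplus_\N A_i\bigr)^\thicksim_{\ldots}$ is nuclear over $R$. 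Since $j_i^* R$ is the adelic ring of $U_i$ (by \ref{sssec-adelic-ring-restricts}) and $j_{i,*}$ of a nuclear module over $j_i^* R$ is nuclear over $R$ (as $j_{i,*}$ is $R$-linear, lax symmetric monoidal, and colimit-preserving here because $j_i$ is an affine — in fact quasi-affine — open immersion into a finite-type scheme), the problem descends to $U_i$.

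On the affine piece $U_i = \spec(A_i)$, the adelic ring $j_i^* L_{k_0}\ldots L_{k_m}\O_X$ is an iterated completed localization $A_i^{\thicksim}_{\ldots}$ of $A_i$, which by \cite{brav2024beilinsonparshinadelessolidalgebraic} is an idempotent algebra in $\mod_{A_{i,\bs}}$ obtained by successively applying completion functors $(-)^\thicksim_T$ at specialization-closed $T$ and $(-)^\thicksim_E$ at generalization-closed $E$. The statement we want is then an iterated version of the first Lemma of the previous section: one needs that $\umap_{A_{i,\bs}}(\prod_\N A_i, A_i^\thicksim_{\ldots}) \simeq (\bigoplus_\N A_i)^\thicksim_{\ldots}$ and that this is nuclear, i.e. that the natural map $(\bigoplus_\N A_i)^\thicksim_{\ldots} \otimes_{A_{i,\bs}} (\bigoplus_\N A_i)^\thicksim_{\ldots} \to (\bigoplus_{\N\times\N} A_i)^\thicksim_{\ldots}$ is an equivalence. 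This follows by induction on the number of completion steps: at each step one uses Corollary A.2.7 of \cite{brav2024beilinsonparshinadelessolidalgebraic} to commute completion at a specialization-closed set past tensor products of eventually connective modules, and the continuity of completion at a generalization-closed set to commute it past the countable colimit $\bigoplus_\N$; the eventual-connectivity hypothesis is harmless since $\bigoplus_\N A_i$ and the adelic rings are connective.

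The main obstacle, and the step requiring real care rather than bookkeeping, is the globalization: justifying that nuclearity over the (global, solid) adelic ring $R \in \calg(\qcoh(X_\bs))$ can be checked locally on $X$, and that $j_{i,*}$ interacts correctly with the nuclear subcategory. Concretely one must verify that $j_{i,*}$, restricted to $R$-modules, sends nuclear $j_i^*R$-modules to nuclear $R$-modules — equivalently, that trace-class maps are preserved — and that a map from a compact $R$-module $Q$ into $\umap_{X_\bs}(P,R)$ is trace-class once its restrictions witness nuclearity locally. Because $\qcoh(X_\bs)$ is compactly generated by objects pushed forward from an affine cover and $R$ is idempotent, a clean way around this is to avoid descent of nuclearity per se and instead directly exhibit, for $P$ a compact generator of $\qcoh(X_\bs)$ of the form $j_{i,!}(\prod_\N A_i)$, the object $\umap_{X_\bs}(P,R) \simeq j_{i,*}(\bigoplus_\N A_i)^\thicksim_{\ldots}$ as a colimit of trace-class maps built from the affine computation — using that $j_{i,*}$ preserves the relevant colimits and that the duality datum exhibiting trace-class-ness on $U_i$ pushes forward; then conclude by the characterization of $\nuc$ as closed under colimits together with the fact that such $P$ generate $\qcoh(X_\bs)^\omega$ under finite colimits and retracts, so their duals generate $\DD^\op$ and it suffices to treat them.
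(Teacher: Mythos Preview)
Your affine computation is essentially right and matches the paper's: on $\spec(A)$ one shows the map
\[
\bigl(\textstyle\bigoplus_\N A\bigr)^\thicksim_{\ldots} \otimes_{A_\bs} \bigl(\textstyle\bigoplus_\N A\bigr)^\thicksim_{\ldots} \;\arr\; \bigl(\textstyle\bigoplus_{\N\times\N} A\bigr)^\thicksim_{\ldots}
\]
is an isomorphism, using the results of Appendix~A in \cite{brav2024beilinsonparshinadelessolidalgebraic}. The gap is in your globalization. The claim that $j_{i,*}$ sends nuclear $j_i^*R$-modules to nuclear $R$-modules is \emph{not} a formal consequence of $j_{i,*}$ being $R$-linear, lax monoidal, and colimit-preserving: pushing forward a trace-class witness $\xi \in \hom(j_i^*R,\, M_n^\vee \otimes_{j_i^*R} M_{n+1})$ lands you in $\hom(R,\, j_{i,*}(M_n^\vee \otimes_{j_i^*R} M_{n+1}))$, and to get a trace-class witness over $R$ you would need a comparison map from this to $(j_{i,*}M_n)^\vee \otimes_R j_{i,*}M_{n+1}$, which goes the wrong way. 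The paper does eventually prove that $j_*$ preserves nuclear modules over the adelic ring (Proposition~\ref{prop-upper-star-for-Zar-is-LL-localization}), but the argument uses that $j_*j^*R$ is a retract of $R$ together with the very proposition you are trying to prove---so you cannot invoke it here. Your proposed workaround (``exhibit $j_{i,*}(\ldots)$ as a colimit of trace-class maps built from the affine computation'') runs into exactly the same obstruction.

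The paper sidesteps this entirely. Rather than transporting nuclearity along $j_*$, it phrases the nuclearity statement as: for compact $P_1, P_2 \in \qcoh(X_\bs)^\omega$, the map
\[
\uhom_{X_\bs}(P_1, R) \otimes_{X_\bs} \uhom_{X_\bs}(P_2, R) \;\arr\; \uhom_{X_\bs}(P_1 \otimes_{X_\bs} P_2, R)
\]
is an isomorphism (here $R = L_{k_0}\ldots L_{k_m}\O_X$, and one uses that $R$ is idempotent to replace $\otimes_R$ by $\otimes_{X_\bs}$). Whether a map of solid quasi-coherent sheaves is an isomorphism can be checked Zariski-locally, and since $j^*$ is symmetric monoidal with a $\qcoh(X_\bs)$-linear left adjoint $j_!$, it respects both the tensor product and the inner $\hom$; thus $j^*$ carries the displayed map to the corresponding map on the affine open, and one is reduced to the affine case without ever needing to push nuclearity forward. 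The paper also isolates an intermediate step (Lemma~\ref{lem-nuc-adelic}) showing $\uhom_{X_\bs}(P, L_{k_0}\ldots L_{k_m}\O_X) \simeq \uhom_{X_\bs}(P, L_{k_0}\O_X) \otimes_{X_\bs} L_{k_1}\O_X \otimes_{X_\bs} \ldots \otimes_{X_\bs} L_{k_m}\O_X$, which both reduces the nuclearity check to the single-$L_k$ case and is reused later in Corollary~\ref{cor-adelic-nuc-splits-as-tensor-product}.
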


We split the proof into several parts.

\begin{lemma}\label{lem-nuc-adelic}
For a tuple $0 \le k_0 < k_1 < \ldots < k_m \le d$, the natural map \begin{equation}\label{nuc-over-adelic-eq1}
\uhom_{X_\bs} \bigl( P, L_{k_0}\O_X \bigr) \otimes_{X_\bs} L_{k_1}\O_X \otimes_{X_\bs} \ldots \otimes_{X_\bs} L_{k_m}\O_X \;\arr\; \uhom_{X_\bs} \bigl( P, L_{k_0}\ldots L_{k_m} \O_X )
\end{equation}
is an isomorphism.
\end{lemma}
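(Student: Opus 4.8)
The plan is to reduce the statement to the already-established fact that $\uhom_{X_\bs}(P,-)$ commutes with the relevant completions on eventually connective modules, and that the adelic rings are flat (or at least behave well) against $\uhom$ out of a compact object. First I would note that the right-hand side can be rewritten using the identity $L_{k_0}\ldots L_{k_m}\O_X \simeq (\O_X)^\thicksim_{S_{k_0}} \otimes_{X_\bs} L_{k_1}\ldots L_{k_m}\O_X$ completed at $S^c_{k_0-1}$, so that the whole claim becomes a statement about how $\uhom_{X_\bs}(P,-)$ interacts with the operations $(-)^\thicksim_{S_{k_0}}$, $(-)^\thicksim_{S^c_{k_0-1}}$, and with tensoring by the remaining ring $L_{k_1}\ldots L_{k_m}\O_X$. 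The key technical inputs are: Corollary A.2.7 of \cite{brav2024beilinsonparshinadelessolidalgebraic}, which gives that completion at a specialization-closed set is symmetric monoidal on eventually connective modules; the fact that completion at a generalization-closed set $E$ is given by $-\otimes_{A_\bs}\Ga(E,\O)_\bullet$ and is therefore continuous and symmetric monoidal; and the idempotence of all the rings involved.

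The main step is a local-to-global dévissage. Since both sides of (\ref{nuc-over-adelic-eq1}) are formed from $P$ and the structure sheaf by colimit-preserving operations, and since a compact object of $\qcoh(X_\bs)$ is, Zariski-locally, built from shifts and finite colimits of the compact generator $\prod_\N \O$, I would first reduce to the affine case $X = \spec(A)$ using that $j^*$ for a Zariski open $j\colon U\hookrightarrow X$ is symmetric monoidal, preserves all limits and colimits, commutes with $\uhom$ out of compacts, and carries adelic rings to adelic rings (section \ref{sssec-adelic-ring-restricts}); a Zariski cover of $X$ then assembles the local isomorphisms into a global one because both sides are sheaves for the Zariski topology in $P$. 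On the affine patch, I would then reduce to $P = \prod_\N A$ (up to shift and finite colimits, for which both sides are exact), where $\uhom_{A_\bs}(\prod_\N A, M) \simeq \bigoplus_\N M$, turning the claim into the statement that $\bigl(\bigoplus_\N \O_X\bigr)$ completed-and-localized along the tower $S_{k_0},\ldots,S_{k_m}$ agrees with $\bigoplus_\N$ of the iterated completed localization — exactly the type of identity proved in the two lemmas preceding Corollary \ref{cor-nuc-of-a-completion}, using continuity of completion at generalization-closed sets to commute it past the infinite direct sum and Corollary A.2.7 to commute the completion at specialization-closed sets past the finite tensor factors.

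The main obstacle I anticipate is bookkeeping around (non-)connectivity: Corollary A.2.7 only gives symmetric monoidality of $(-)^\thicksim_T$ on the subcategory $\mod^{<+\infty}_{A_\bs}$ of eventually connective modules, whereas $\uhom_{A_\bs}(P,-)$ applied to an adelic ring need not be connective, and iterating the construction over the tuple $k_0<\cdots<k_m$ compounds this. The way around it is to observe that in the reductions above everything can be arranged to stay eventually connective — $\prod_\N A$ and $\bigoplus_\N A$ are connective, the localizations $\leftcomp\,\O_{X,\p}$ and the generalization-closed completions are computed by flat base change and stay connective, and only finitely many shifts are introduced by writing a general compact $P$ in terms of the generator — so that A.2.7 applies at each stage. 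Once the connectivity is under control, the remaining verification is the routine manipulation of infinite direct sums against continuous functors, and the isomorphism (\ref{nuc-over-adelic-eq1}) follows; Proposition \ref{prop-nuc-adelic} is then immediate by combining this lemma with Lemma \ref{lem-nuc-adelic}'s output and the nuclearity criterion, since $\uhom_{X_\bs}(P, L_{k_0}\O_X)$ is visibly a summand of a product of copies of completed local rings.
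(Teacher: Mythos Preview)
Your reductions to the affine case via Zariski descent and then to the single compact generator $P = \prod_\N A$ match the paper's proof exactly. The gap is in the final step: the identity $\uhom_{A_\bs}(\prod_\N A, M) \simeq \bigoplus_\N M$ holds only for \emph{discrete} $M$, and the adelic rings $L_k A$ are genuinely non-discrete solid modules. Already for a single completion one has $\uhom_{A_\bs}(\prod_\N A, A^\comp_I) \simeq (\bigoplus_\N A)^\comp_I$, the \emph{completed} direct sum, which is not $\bigoplus_\N A^\comp_I$. Your reformulated claim --- that $(\bigoplus_\N A)$ iteratively completed-and-localized along the tower coincides with $\bigoplus_\N$ of the iterated completion-localization --- is therefore not what the lemma reduces to; worse, that reformulated statement is actually false (completion does not commute with countable coproducts), which is a clear signal that the formula feeding into it cannot be right.

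The paper's endgame is different and more hands-on: it writes both sides of the map explicitly as products over flags of primes (displayed for $m=1$), observes via Proposition A.2.1 of \cite{brav2024beilinsonparshinadelessolidalgebraic} that both are $S_{k_1}$-complete, and then invokes Proposition A.1.1 of loc.\ cit.\ to reduce to checking the map after $-\otimes_{A_\bs}(A/\q)_\bs$ for each prime $\q$ of dimension at most $k_1$; Lemmas 1.1.7 and 1.3.1 then handle the fiberwise identification. Your instinct to route through Corollary A.2.7 is not hopeless --- one can argue that $\uhom_{A_\bs}(\prod_\N A,-)$ commutes with $(-)^\thicksim_{S_{k_1}}$ because the latter is itself an inner hom, and separately with the generalization-closed localization because $\Ga(S^c_{k_1-1},\O)$ is a filtered colimit of dualizable $A$-modules --- but that is a different and more careful argument than the one you sketched, and it still begins from the correct identification of $\uhom_{A_\bs}(\prod_\N A, L_{k_0}A)$ as a completed, not honest, direct sum.
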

\begin{proof}
By Zariski descent for solid quasi-coherent sheaves, see Theorem 9.8 of \cite{condensed}, it suffices to prove that
the map (\ref{nuc-over-adelic-eq1}) becomes an isomorphism after restriction to any affine open $j \colon \spec(A) \hookrightarrow X$.
Recall that the functor \[
j^* \colon \qcoh(X_\bs) \;\arr\; \mod_{A_\bs}
\] admits a $\qcoh(X_\bs)$-linear fully faithful left adjoint $j_!$. In particular, the restriction functor respects the inner $\hom$:
\[
j^* \uhom_{X_\bs} (M, N) \;\simeq\; \uhom_{U_\bs}(j^*M, j^*N) \;.
\]

Because the adelic rings are respected by the restriction functor $j^*$, we have reduced the proof to the affine case $X = \spec(A)$.
We are left to prove that the map \[
\uhom_{A_\bs} \bigl( \prod_{\N} A , L_{k_0}A \bigr) \otimes_{A_\bs} L_{k_1}A \otimes_{A_\bs} \ldots \otimes_{A_\bs} L_{k_m}A \;\arr\; \uhom_{A_\bs} \bigl( \prod_\N A, L_{k_0}\ldots L_{k_m} A \bigr)
\] is an isomorphism. To make the formulas look less cumbersome, we only treat the case $m = 1$. The general case is proved similarly
using induction on $m$.

We now prove that the map \begin{equation}\label{nuc-over-adelic-eq2}
\xymatrix{
\uhom_{A_\bs} \bigl( \prod_{\N} A , L_{k_0}A \bigr) \otimes_{A_\bs} L_{k_1}A \ar[rr] && \uhom_{A_\bs} \bigl( \prod_\N A, L_{k_0} L_{k_1} A \bigr) \\
\prod_{\substack{\p \in \spec(A) \\ \dim(A/\p) = k_0}} \leftcomp\:\bigl( \oplus_\N A \bigr)_\p \otimes_{A_\bs} \prod_{\substack{\q \in \spec(A) \\ \dim (A/\q) = k_1}} \leftcomp A_\q \ar[rr]\ar@{}[u]|\invsimeq && \prod_{\substack{\q \in \spec(A) \\ \dim(A/\q) = k_1}} \leftcomp\: \Bigl( \prod_{\substack{\p \supset \q \\ \dim(A/\p) = k_0}} \leftcomp\: \bigl( \oplus_\N A \bigr)_\p \Bigr)_\q  \ar@{}[u]|\invsimeq
}
\end{equation} is an isomorphism. By Proposition A.2.1, \cite{brav2024beilinsonparshinadelessolidalgebraic},
both the source and the target of (\ref{nuc-over-adelic-eq2}) are $S_{k_1}$-complete.
Therefore, by Proposition A.1.1, \cite{brav2024beilinsonparshinadelessolidalgebraic}, it suffices to prove that the map
(\ref{nuc-over-adelic-eq2}) becomes an isomorphism upon application of $- \otimes_{A_\bs} (A/\q)_\bs$ for any point $\q \in \spec(A)$
of dimension not greater than $k_1$. In case dimension of the point $\q$ is strictly less than $k_1$, both the source and the 
target vanish. In case the dimension is exactly $k_1$, we use Lemma 1.1.7 and Lemma 1.3.1, 
\cite{brav2024beilinsonparshinadelessolidalgebraic}, to distribute
the tensor product with the products and completions, and the map turns into \[
A_\q / \q \otimes_{{A/\q}_\bs} \prod_{\substack{\p \supset \q \\ \dim(A/\p) = k_0}} \leftcomp\:\bigl( \oplus_\N A/\q \bigr)_\p \;\stackrel{=}{\arr} A_\q / \q \otimes_{{A/\q}_\bs} \prod_{\substack{\p \supset \q \\ \dim(A/\p) = k_0}} \leftcomp\:\bigl( \oplus_\N A/\q \bigr)_\p \;.
\]
\end{proof}

\begin{proof}[Proof of Proposition \ref{prop-nuc-adelic}]
By Lemma \ref{lem-nuc-adelic}, it suffices to treat the case $m = 0$ and $m=1$. Regarding the former case,
it is well known that the subcategory $\nuc(X_\bs) \subset \qcoh(X_\bs)$ of nuclear sheaves is equivalent to the full subcategory
$\qcoh(X) \subset \qcoh(X_\bs)$ of classical (or discrete) quasi-coherent sheaves, and the statement says that, for a compact
object $P \in \qcoh(X_\bs)^\omega$, the dual $\uhom_{X_\bs}(P, \O_X)$ is discrete. The latter is clear.

In the case $m=1$, it suffices to prove that, for a pair $P_1, P_2 \in \qcoh(X_\bs)^\omega$ of compact objects,
the natural map \[
\uhom_{X_\bs}(P_1, L_k\O_X) \otimes_{X_\bs} \uhom_{X_\bs}(P_2, L_k\O_X) \;\arr\; \uhom_{X_\bs} (P_1 \otimes_{X_\bs} P_2, L_k\O_X )
\] is an isomorphism. As in the proof of Lemma \ref{lem-nuc-adelic}, it suffices to treat the affine case $X = \spec(A)$,
where it suffices to prove that the map \[
\prod_{\substack{\p \in \spec(A) \\ \dim(A/\p) = k}} \leftcomp\: \bigl( \oplus_\N A \bigr)_\p \otimes_{A_\bs} \prod_{\substack{\p \in \spec(A) \\ \dim(A/\p) = k}} \leftcomp\: \bigl( \oplus_\N A \bigr)_\p \;\arr\; \prod_{\substack{\p \in \spec(A) \\ \dim(A/\p) = k}} \leftcomp\: \bigl( \oplus_{\N\times\N} A \bigr)_\p
\] is an isomorphism. That immediately follows from Proposition A.2.1, \cite{brav2024beilinsonparshinadelessolidalgebraic}.
\end{proof}

\begin{corollary}\label{cor-nuc-over-adelic-and-rig}
The inclusion \[
\Bigl( L_{k_0}\ldots L_{k_m}\O_X - \mod_{\qcoh(X_\bs)} \Bigr)^\rig \;\subseteq\; \nuc(L_{k_0}\ldots L_{k_m}\O_X, X_\bs)
\] is an equivalence; the full subcategory \begin{equation}\label{cor-nuc-of-adelic-eq1}
\nuc(L_{k_0}\ldots L_{k_m}\O_X, X_\bs) \;\subseteq\; L_{k_0}\ldots L_{k_m}\O_X - \mod(\qcoh(X_\bs))
\end{equation}
is generated under colimits by objects of the form \[
\uhom_{X_\bs} \bigl( P, L_{k_0}\ldots L_{k_m}\O_X \bigr)
\] where $P$ varies over the compact objects of $\qcoh(X_\bs)$;
the inclusion (\ref{cor-nuc-of-adelic-eq1}) admits a continuous $\nuc(L_{k_0}\ldots L_{k_m}\O_X, X_\bs)$-linear right adjoint.
\end{corollary}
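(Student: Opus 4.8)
The plan is to apply Proposition \ref{prop-main-gen-nuc} and the corollary following it to the category $R-\mod_{\qcoh(X_\bs)}$, where from now on we abbreviate $R := L_{k_0}\ldots L_{k_m}\O_X$; this is the same kind of deduction that produced Corollaries \ref{cor-nuc-of-a-completion} and \ref{cor-nuc-is-retract} from the corresponding lemmas in the affine completed case.

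First I would put the category in the form demanded by Proposition \ref{prop-main-gen-nuc}. The free--forgetful adjunction shows that $R-\mod_{\qcoh(X_\bs)}$ is compactly generated by the objects $R\otimes_{X_\bs}P$ with $P\in\qcoh(X_\bs)^\omega$ (the forgetful functor is continuous and conservative, and it detects compactness of the free modules since it preserves colimits). Its monoidal unit $R\simeq R\otimes_{X_\bs}\O_X$ is compact because $\O_X\in\qcoh(X_\bs)^\omega$, and the full subcategory of compact objects is closed under $\otimes_R$: it is the thick subcategory generated by the free modules, which is stable under $\otimes_R$ because $(R\otimes_{X_\bs}P)\otimes_R(R\otimes_{X_\bs}P')\simeq R\otimes_{X_\bs}(P\otimes_{X_\bs}P')$ is again free on a compact object, using that $\qcoh(X_\bs)^\omega$ is closed under $\otimes_{X_\bs}$. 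Therefore, putting $\DD:=\bigl(R-\mod_{\qcoh(X_\bs)}^\omega\bigr)^\op$, we obtain a small symmetric monoidal stable category with $\ind(\DD^\op)\simeq R-\mod_{\qcoh(X_\bs)}$, the hypotheses of \ref{sssec-nuc-defn} are satisfied, and $\nuc(R,X_\bs)=\nuc(R-\mod_{\qcoh(X_\bs)})$ is $\omega_1$-presentable, dualizable, and symmetric monoidal.

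Next I would check the hypothesis of Proposition \ref{prop-main-gen-nuc}: for every compact $N\in R-\mod_{\qcoh(X_\bs)}^\omega$ the dual $N^\vee\simeq\uhom_R(N,R)$ is nuclear. When $N=R\otimes_{X_\bs}P$ one has $\uhom_R(R\otimes_{X_\bs}P,R)\simeq\uhom_{X_\bs}(P,R)$, which is nuclear by Proposition \ref{prop-nuc-adelic}. A general compact $N$ is a retract of a finite colimit of free modules, so $N^\vee$ is a retract of a finite limit of objects $\uhom_{X_\bs}(P,R)$; since the nuclear objects form a stable subcategory of $R-\mod_{\qcoh(X_\bs)}$ closed under colimits (hence under retracts) and under finite limits, $N^\vee$ is nuclear.

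Finally I would read off the three assertions. Part (1) of Proposition \ref{prop-main-gen-nuc} gives the equivalence $\bigl(R-\mod_{\qcoh(X_\bs)}\bigr)^\rig\simeq\nuc(R,X_\bs)$. The corollary following Proposition \ref{prop-main-gen-nuc} says $\nuc(R,X_\bs)$ is generated under colimits by the duals of the compact objects of $R-\mod_{\qcoh(X_\bs)}$; as each such dual is a retract of a finite limit of objects $\uhom_{X_\bs}(P,R)$ with $P\in\qcoh(X_\bs)^\omega$, and these objects are themselves nuclear, the colimit closure of $\{\uhom_{X_\bs}(P,R)\}_P$ already exhausts $\nuc(R,X_\bs)$. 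For the continuous $\nuc(R,X_\bs)$-linear right adjoint, the third claim of Proposition \ref{prop-main-gen-nuc} produces the continuous right adjoint of the inclusion, and its strict $\nuc(R,X_\bs)$-linearity follows, exactly as in Corollary \ref{cor-nuc-is-retract}, from the fact that the inclusion is symmetric monoidal with rigid source together with \cite{GR1}, Lemma 9.3.6. The only slightly delicate point is the passage, in the second step, from the compact generators of $\qcoh(X_\bs)$---to which Proposition \ref{prop-nuc-adelic} applies verbatim---to arbitrary compact objects of $R-\mod_{\qcoh(X_\bs)}$; but this is handled by the thick-subcategory argument together with the stability of $\nuc(R,X_\bs)$ under finite limits and retracts, and the real content sits entirely in Proposition \ref{prop-nuc-adelic}.
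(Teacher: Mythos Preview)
Your proposal is correct and follows essentially the same approach as the paper: the paper's proof simply says ``Immediately follows from Proposition~\ref{prop-main-gen-nuc} and Proposition~\ref{prop-nuc-adelic}. Linearity of the right adjoint is proved as in the proof of Corollary~\ref{cor-nuc-is-retract},'' and your argument is precisely the natural elaboration of this, including the thick-subcategory passage from the free compact generators to all compacts and the stability of nuclear objects under finite limits and retracts.
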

\begin{proof}
Immediately follows from Proposition \ref{prop-main-gen-nuc} and Proposition \ref{prop-nuc-adelic}. Linearity of the right adjoint
is proved as in the proof of Corollary \ref{cor-nuc-is-retract}.
\end{proof}

Before we state the next corollary, recall from Proposition 4.2.1, \cite{brav2024beilinsonparshinadelessolidalgebraic},
that, for a tuple $0 \le k_0 < k_1 < \ldots < k_m \le d$, the map \[
L_{k_0} \O_X \otimes_{X_\bs} \ldots \otimes_{X_\bs} L_{k_m} \O_X \;\arr\; L_{k_0}\ldots L_{k_m} \O_X
\] is an isomorphism of commutative algebras in $\qcoh(X_\bs)$. In particular, the multiplication map \[
L_{k_0}\O_X - \mod_{\qcoh(X_\bs)} \otimes_{\qcoh(X_\bs)} \ldots \otimes_{\qcoh(X_\bs)} L_{k_m}\O_X - \mod_{\qcoh(X_\bs)} \;\arr\; L_{k_0}\ldots L_{k_m}\O_X - \mod_{\qcoh(X_\bs)}
\] is an equivalence. The next corollary shows that the same holds for nuclear modules.
\begin{corollary}\label{cor-adelic-nuc-splits-as-tensor-product}
The multiplication functor \[
\nuc(L_{k_0}\O_X, X_\bs) \otimes_{\nuc(X_\bs)} \ldots \otimes_{\nuc(X_\bs)} \nuc(L_{k_m}\O_X, X_\bs) \;\arr\; \nuc(L_{k_0}\ldots L_{k_m}\O_X, X_\bs)
\] is an equivalence.
\end{corollary}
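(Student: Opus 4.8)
The plan is to follow the proof of Corollary~\ref{cor-nuc-of-comp-is-idempotent}, reducing the assertion to the solid equivalence
\[
L_{k_0}\O_X - \mod_{\qcoh(X_\bs)} \otimes_{\qcoh(X_\bs)} \ldots \otimes_{\qcoh(X_\bs)} L_{k_m}\O_X - \mod_{\qcoh(X_\bs)} \;\simeq\; L_{k_0}\ldots L_{k_m}\O_X - \mod_{\qcoh(X_\bs)}
\]
recalled just above. Write $\mathcal{M} = L_{k_0}\ldots L_{k_m}\O_X - \mod_{\qcoh(X_\bs)}$ and let $F$ denote the multiplication functor in question. I would show separately that $F$ is fully faithful and essentially surjective.

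For full faithfulness I would argue exactly as in Corollary~\ref{cor-nuc-of-comp-is-idempotent}. By Corollary~\ref{cor-nuc-over-adelic-and-rig}, applied to each $L_{k_i}\O_X$ and to $\O_X$ itself, the inclusions $\nuc(L_{k_i}\O_X, X_\bs) \hookrightarrow L_{k_i}\O_X - \mod_{\qcoh(X_\bs)}$ and $\nuc(X_\bs) \hookrightarrow \qcoh(X_\bs)$ admit continuous linear right adjoints, which are moreover strictly linear because the nuclear categories are rigid (cf.\ \cite{GR1}, Lemma~9.3.6). Using the linearity of these right adjoints as in Corollary~\ref{cor-nuc-of-comp-is-idempotent}, the source $\nuc(L_{k_0}\O_X, X_\bs) \otimes_{\nuc(X_\bs)} \ldots \otimes_{\nuc(X_\bs)} \nuc(L_{k_m}\O_X, X_\bs)$ is a retract in $\pr^L_\st$ of $L_{k_0}\O_X - \mod_{\qcoh(X_\bs)} \otimes_{\qcoh(X_\bs)} \ldots \otimes_{\qcoh(X_\bs)} L_{k_m}\O_X - \mod_{\qcoh(X_\bs)} \simeq \mathcal{M}$; a retract in $\pr^L_\st$ is realized by a fully faithful functor (the splitting of the associated idempotent), and here that functor factors as $F$ followed by the fully faithful inclusion $\nuc(L_{k_0}\ldots L_{k_m}\O_X, X_\bs) \hookrightarrow \mathcal{M}$, whence $F$ is fully faithful.

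For essential surjectivity I would use the generators from Corollary~\ref{cor-nuc-over-adelic-and-rig}. Since $F$ is continuous and fully faithful, its essential image is a colimit-closed full subcategory of $\nuc(L_{k_0}\ldots L_{k_m}\O_X, X_\bs)$, so it suffices to see that this image contains the generators $\uhom_{X_\bs}(P, L_{k_0}\ldots L_{k_m}\O_X)$ with $P \in \qcoh(X_\bs)^\omega$. The structure functor $\nuc(L_{k_i}\O_X, X_\bs) \to \nuc(L_{k_0}\ldots L_{k_m}\O_X, X_\bs)$ is base change along $L_{k_i}\O_X \to L_{k_0}\ldots L_{k_m}\O_X$ followed by nuclearization, and it carries $\uhom_{X_\bs}(P_i, L_{k_i}\O_X)$ to $\uhom_{X_\bs}(P_i, L_{k_0}\ldots L_{k_m}\O_X)$ --- the nuclearization being vacuous, since that object is already nuclear by Proposition~\ref{prop-nuc-adelic}. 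Forming the tensor product of such objects inside $\nuc(L_{k_0}\ldots L_{k_m}\O_X, X_\bs)$, which is inherited from $\mathcal{M}$ and, since the compact objects of $\qcoh(X_\bs)$ are dualizable, equals $\uhom_{X_\bs}(P_0 \otimes_{X_\bs} \ldots \otimes_{X_\bs} P_m, L_{k_0}\ldots L_{k_m}\O_X)$, one sees that $F$ sends the generator $\uhom_{X_\bs}(P_0, L_{k_0}\O_X) \otimes \ldots \otimes \uhom_{X_\bs}(P_m, L_{k_m}\O_X)$ of the source to $\uhom_{X_\bs}(P_0 \otimes_{X_\bs} \ldots \otimes_{X_\bs} P_m, L_{k_0}\ldots L_{k_m}\O_X)$. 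Taking $P_1 = \ldots = P_m = \O_X$, a compact object of $\qcoh(X_\bs)$, every generator of the target is hit; hence $F$ is essentially surjective, and therefore an equivalence.

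The step I expect to be the main obstacle is the retract argument for full faithfulness: here one juggles tensor products over the two distinct base categories $\nuc(X_\bs)$ and $\qcoh(X_\bs)$, and one must verify carefully --- following, but somewhat more elaborately than, the corresponding passage in the proof of Corollary~\ref{cor-nuc-of-comp-is-idempotent} --- that the strictly linear right adjoints of Corollary~\ref{cor-nuc-over-adelic-and-rig} really do assemble into a retraction $\mathcal{M} \to \nuc(L_{k_0}\O_X, X_\bs) \otimes_{\nuc(X_\bs)} \ldots \otimes_{\nuc(X_\bs)} \nuc(L_{k_m}\O_X, X_\bs)$ through which $F$ exhibits the source as a retract of $\mathcal{M}$. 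A secondary point, needed for the description of the structure functors in the essential-surjectivity step, is that base change along an idempotent-algebra map in $\qcoh(X_\bs)$ preserves nuclear objects; this holds because such a base change functor is symmetric monoidal, continuous, and preserves compact objects.
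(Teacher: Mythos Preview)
Your strategy is exactly the paper's: full faithfulness via the retract argument of Corollary~\ref{cor-nuc-of-comp-is-idempotent}, then essential surjectivity by hitting the generators $\uhom_{X_\bs}(P, L_{k_0}\ldots L_{k_m}\O_X)$ from Corollary~\ref{cor-nuc-over-adelic-and-rig}. The full-faithfulness half is fine.

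In the essential-surjectivity half there is a genuine error: compact objects of $\qcoh(X_\bs)$ are \emph{not} dualizable. In the affine case the compact generator $\prod_\N A \in \mod_{A_\bs}$ has $\uhom_{A_\bs}(\prod_\N A, A) \simeq \bigoplus_\N A$, but $(\prod_\N A)\otimes_{A_\bs}(\bigoplus_\N A) \simeq \bigoplus_\N\prod_\N A$ is not $\uhom_{A_\bs}(\prod_\N A, \prod_\N A)\simeq \prod_{\N\times\N}A$; indeed, the failure of compacts to be dualizable is the whole reason $\nuc$ is a proper subcategory. You invoke this false claim to identify the tensor product of the $\uhom_{X_\bs}(P_i, L_{k_0}\ldots L_{k_m}\O_X)$ with $\uhom_{X_\bs}(P_0\otimes_{X_\bs}\ldots\otimes_{X_\bs} P_m, L_{k_0}\ldots L_{k_m}\O_X)$. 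The same issue lurks in the preceding step: your assertion that base change along $L_{k_i}\O_X \to L_{k_0}\ldots L_{k_m}\O_X$ carries $\uhom_{X_\bs}(P_i, L_{k_i}\O_X)$ to $\uhom_{X_\bs}(P_i, L_{k_0}\ldots L_{k_m}\O_X)$ would be automatic if $P_i$ were dualizable, but as stated it is unjustified.

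The fix is short and is what the paper does: drop the detour through general $P_0,\ldots,P_m$ and invoke Lemma~\ref{lem-nuc-adelic} directly. That lemma says precisely that $\uhom_{X_\bs}(P, L_{k_0}\O_X)\otimes_{X_\bs} L_{k_1}\O_X\otimes_{X_\bs}\ldots\otimes_{X_\bs} L_{k_m}\O_X \simeq \uhom_{X_\bs}(P, L_{k_0}\ldots L_{k_m}\O_X)$ for compact $P$, i.e.\ the image under $F$ of $\uhom_{X_\bs}(P, L_{k_0}\O_X)\otimes 1\otimes\ldots\otimes 1$ is the desired generator. This is a non-formal input (its proof uses the completeness results of Appendix~A in \cite{brav2024beilinsonparshinadelessolidalgebraic}), and it replaces both the dualizability claim and the unproved base-change computation in your argument.
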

\begin{proof}
First, we note that the functor in question is fully faithful, because, by Corollary \ref{cor-nuc-over-adelic-and-rig},
the category \[
\nuc(L_{k_0}\O_X, X_\bs) \otimes_{\nuc(X_\bs)} \ldots \otimes_{\nuc(X_\bs)} \nuc(L_{k_m}\O_X, X_\bs)
\] is a retract of \[
L_{k_0}\O_X - \mod_{\qcoh(X_\bs)} \otimes_{\qcoh(X_\bs)} \ldots \otimes_{\qcoh(X_\bs)} L_{k_m}\O_X - \mod_{\qcoh(X_\bs)}
\] in $\pr^L_\st$. Thus, it suffices to prove that our functor is essentially surjective.
By Corollary \ref{cor-nuc-over-adelic-and-rig}, it suffices to prove that each object of the form \[
\uhom_{X_\bs} (P, L_{k_0} \ldots L_{k_m} \O_X) \;,
\] where $P \in \qcoh(X_\bs)$ is compact, is in the image. But that immediately follows from Lemma \ref{lem-nuc-adelic}.
\end{proof}

\sssec{} Recall from section \ref{sssec-adelic-ring-restricts} that adelic rings behave well with respect to the restriction
to a Zariski open subset. Here we expand on that observation and record the behavior of the category of nuclear modules over an
adelic ring with respect to the restriction to a Zariski open subset.

\begin{proposition}\label{prop-upper-star-for-Zar-is-LL-localization}
Let $j \colon U \hookrightarrow X$ be a Zariski open embedding of schemes of finite type over $\Z$. The restriction functor
\[
j^* \colon \nuc(L_{k_0}\ldots L_{k_m} \O_X, X_\bs) \;\arr\; \nuc(j^* L_{k_0}\ldots L_{k_m} \O_X, U_\bs)
\] admits a fully faithful continuous right adjoint.
\end{proposition}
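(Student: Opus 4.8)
The plan is to produce the right adjoint by restricting the known right adjoint at the level of solid module categories, and then to check that it lands in, and is fully faithful on, the nuclear subcategories. Write $R = L_{k_0}\ldots L_{k_m}\O_X$ and $R_U = j^* R$, so that $R_U$ is the corresponding adelic ring on $U$ by section \ref{sssec-adelic-ring-restricts}. The restriction functor $j^*\colon \qcoh(X_\bs) \to \qcoh(U_\bs)$ is symmetric monoidal, preserves all limits and colimits, and admits a $\qcoh(X_\bs)$-linear fully faithful left adjoint $j_!$ (used already in the proof of Lemma \ref{lem-nuc-adelic}); hence it induces a symmetric monoidal colimit-preserving functor $j^*\colon R-\mod_{\qcoh(X_\bs)} \to R_U-\mod_{\qcoh(U_\bs)}$ which again has a fully faithful left adjoint, and therefore a right adjoint $j_*$ by the adjoint functor theorem. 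First I would record that $j_*$ is fully faithful (equivalently $j^* j_* \simeq \id$): this is the Zariski-locality statement, and since $j$ is an open embedding it follows from the projection-formula/base-change properties of $j_!, j^*$ that are already in play, exactly as in the classical quasi-coherent setting.

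Next I would pass to nuclear objects. By Corollary \ref{cor-nuc-is-retract} (applied on both $X$ and $U$, using Corollary \ref{cor-nuc-over-adelic-and-rig} to know the hypotheses hold for the adelic rings), the inclusions $\nuc(R, X_\bs) \hookrightarrow R-\mod_{\qcoh(X_\bs)}$ and $\nuc(R_U, U_\bs) \hookrightarrow R_U-\mod_{\qcoh(U_\bs)}$ admit continuous, linear right adjoints; call them $(-)^{\nuc}$. I claim $j^*$ preserves nuclear objects, so that it restricts to $j^*\colon \nuc(R, X_\bs) \to \nuc(R_U, U_\bs)$. To see this, use the generation statement of Corollary \ref{cor-nuc-over-adelic-and-rig}: $\nuc(R, X_\bs)$ is generated under colimits by the objects $\uhom_{X_\bs}(P, R)$ with $P \in \qcoh(X_\bs)^\omega$; since $j^*$ is colimit-preserving it suffices to check $j^* \uhom_{X_\bs}(P, R)$ is nuclear, and since $j^* \uhom_{X_\bs}(P, R) \simeq \uhom_{U_\bs}(j^* P, R_U)$ with $j^* P \in \qcoh(U_\bs)^\omega$, this is precisely Proposition \ref{prop-nuc-adelic} on $U$. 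Now define the right adjoint on nuclear categories as the composite $\nuc(R_U, U_\bs) \hookrightarrow R_U-\mod_{\qcoh(U_\bs)} \xrightarrow{j_*} R-\mod_{\qcoh(X_\bs)} \xrightarrow{(-)^{\nuc}} \nuc(R, X_\bs)$; a standard adjunction bookkeeping using that the restricted $j^*$ on nuclear categories is the composite of the inclusion and the unrestricted $j^*$ shows this composite is right adjoint to $j^*$.

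Finally I would check full faithfulness of this right adjoint, i.e. that the counit of the $(j^*, $ right adjoint$)$ adjunction on nuclear categories is an equivalence. Unwinding, for $N \in \nuc(R_U, U_\bs)$ this asks that $j^*\bigl((j_* N)^{\nuc}\bigr) \to N$ be an equivalence in $\nuc(R_U, U_\bs)$. Since $j^*$ commutes with the nuclearization functors (both being computed by the same colimit presentations, or: $j^*$ is symmetric monoidal and sends the generating duals to generating duals), this reduces to $j^*(j_* N) \simeq N$, which is the full faithfulness of $j_*$ already established in the first step, combined with the fact that $j_* N$ is already nuclear — indeed by Corollary \ref{cor-nuc-is-retract} the nuclearization is $\nuc$-linear, and one checks $j_*$ sends the generators $\uhom_{U_\bs}(Q, R_U)$ to nuclear objects using $j_* \uhom_{U_\bs}(Q, R_U) \simeq \uhom_{X_\bs}(j_! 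Q, R)$ together with Proposition \ref{prop-nuc-adelic} on $X$. The main obstacle I anticipate is precisely this last compatibility: verifying that $j_*$ preserves nuclearity (equivalently that nuclearization commutes with $j_*$), since $j_*$, unlike $j^*$, need not preserve compact objects and the identification $j_* \uhom_{U_\bs}(Q, R_U) \simeq \uhom_{X_\bs}(j_! Q, R)$ must be used carefully — but $j_!$ does preserve compact objects (it is the fully faithful linear left adjoint to $j^*$), so Proposition \ref{prop-nuc-adelic} applies and the argument closes.
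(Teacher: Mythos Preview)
Your proof is correct and at its core coincides with the paper's: both reduce to showing that $j_*$ preserves nuclear objects, after which the desired right adjoint is simply the restriction of the already fully faithful $j_*$ (making your detour through the nuclearization composite unnecessary once you reach that point). The only genuine difference is in how the preservation of nuclear generators is verified: you use the identity $j_*\uhom_{U_\bs}(Q, j^*R)\simeq\uhom_{X_\bs}(j_!Q, R)$ together with the fact that $j_!$ preserves compacts, whereas the paper writes the generators as $\uhom_{U_\bs}(j^*P, j^*R)$ (via essential surjectivity of $j^*$ on compacts), computes $j_*\uhom_{U_\bs}(j^*P, j^*R)\simeq\uhom_{X_\bs}(P, j_*j^*R)$, and then invokes the adelic product formula to see that $j_*j^*R$ is a retract of $R$. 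Your variant avoids that last retract observation and is arguably cleaner.
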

\begin{proof}
It suffices to prove that the lower-$*$ push-forward \[
j_* \colon j^*L_{k_0}\ldots L_{k_m} \O_X - \mod_{\qcoh(U_\bs)} \;\hookrightarrow\; L_{k_0}\ldots L_{k_m} \O_X - \mod_{\qcoh(X_\bs)}
\] preserves nuclear objects. Because the restriction $j^* \colon \qcoh(X_\bs) \;\arr\; \qcoh(U_\bs)$ is essentially surjective on
compact objects, it suffices to prove that, for a compact object $P\in \qcoh(X_\bs)^\omega$,
the $L_{k_0} \ldots L_{k_m} \O_X$-module \[
j_*\uhom_{U_\bs} (j^*P, j^*L_{k_0} \ldots L_{k_m} \O_X) \;\simeq\; \uhom_{X_\bs}(P, j_*j^* L_{k_0} \ldots L_{k_m} \O_X)
\] is nuclear. That follows from Proposition \ref{prop-nuc-adelic}
because $j_* j^*L_{k_0} \ldots L_{k_m} \O_X$ is a retract of $L_{k_0} \ldots L_{k_m} \O_X$, which in turn follows from the formula
describing the adelic rings as products over flags of points in $X$---see section \ref{sssec-adelic-ring-restricts}.
\end{proof}

\section{The main theorem}

Let $\E$ be an accessible stable category.
Fix an accessible localizing invariant \[
F \colon \pr_\st^\dual \;\arr\; \E \;.
\]

\begin{theorem}\label{thm-nuc-adelic-descent}
The canonical map \[
F\bigl(\nuc(X_\bs)\bigr) \;\arr\; \stackrel[\substack{0 \le k_0 < k_1 < \ldots < k_m \le d \\ 0\le m}]{}{\lim} F\bigl(\nuc(L_{k_0}\ldots L_{k_m} \O_X, X_\bs )\bigr)
\] is an isomorphism.
\end{theorem}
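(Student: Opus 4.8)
The plan is to reduce the statement to a finite-stage descent statement and then to the known adelic descent for solid quasi-coherent sheaves (Theorem 4.2.4 of \cite{brav2024beilinsonparshinadelessolidalgebraic}), using the fact that $F$ kills the difference between $\qcoh(X_\bs)$ and $\nuc(X_\bs)$. First I would recall that the indexing poset $\{0 \le k_0 < \ldots < k_m \le d\}$ is the poset of nonempty subsets of $[0,d]$, so the limit is a totalization of a cosimplicial-type (in fact cubical) diagram, and by the usual reduction it suffices to check the claim one "level" at a time; equivalently, one may induct on the dimension $d$ using the skeletal filtration $S_{-1} \subseteq S_0 \subseteq \ldots \subseteq S_d = X$. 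The key algebraic input is Corollary \ref{cor-adelic-nuc-splits-as-tensor-product}, which lets us write $\nuc(L_{k_0}\ldots L_{k_m}\O_X, X_\bs) \simeq \nuc(L_{k_0}\O_X, X_\bs) \otimes_{\nuc(X_\bs)} \cdots \otimes_{\nuc(X_\bs)} \nuc(L_{k_m}\O_X, X_\bs)$, together with Corollary \ref{cor-nuc-of-comp-is-idempotent}, which says each $\nuc(L_k\O_X, X_\bs)$ is an idempotent $\nuc(X_\bs)$-algebra. Thus the whole cubical diagram of nuclear module categories is obtained by tensoring up, over $\nuc(X_\bs)$, the $d+1$ idempotent algebras $\nuc(L_0\O_X, X_\bs), \ldots, \nuc(L_d\O_X, X_\bs)$, and the statement becomes: the Amitsur-type complex of these idempotent algebras is sent by $F$ to a limit diagram with value $F(\nuc(X_\bs))$.

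Next I would identify the fiber/cofiber terms. For an idempotent algebra $B$ in a rigid category $\CC = \nuc(X_\bs)$, the "complementary" idempotent $B^\perp$ fits into a localization sequence $\CC_{B^\perp} \to \CC \to B\text{-}\mod_\CC$, and $F$ applied to the Amitsur complex of a single $B$ recovers $F(\CC)$ because the cubical diagram built from $B$ alone is split after one step. The genuine content is that the adeles resolve $\O_X$: on the level of solid quasi-coherent sheaves this is exactly the statement (\ref{intro-solid-adelic-functor}) that $\qcoh(X_\bs) \xrightarrow{\sim} \lim L_{k_0}\ldots L_{k_m}\O_X\text{-}\mod_{\qcoh(X_\bs)}$, proven in loc. cit.. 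I would transport this equivalence through the rigidification functor $(-)^\rig$: since rigidification is a right adjoint it preserves limits, and by Corollary \ref{cor-nuc-over-adelic-and-rig} it sends each $L_{k_0}\ldots L_{k_m}\O_X\text{-}\mod_{\qcoh(X_\bs)}$ to $\nuc(L_{k_0}\ldots L_{k_m}\O_X, X_\bs)$ and $\qcoh(X_\bs)$ to $\nuc(X_\bs)$. Hence $\nuc(X_\bs) \xrightarrow{\sim} \stackrel{\dual}{\lim}\, \nuc(L_{k_0}\ldots L_{k_m}\O_X, X_\bs)$, a limit computed in $\pr^\dual_\st$; this is essentially Theorem \ref{thm-adelic-descent-cats}, which the paper states separately. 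Because each transition functor in the diagram admits a fully faithful continuous right adjoint (Corollary \ref{cor-nuc-over-adelic-and-rig} and Proposition \ref{prop-upper-star-for-Zar-is-LL-localization}), the $\pr^\dual_\st$-limit and the $\pr^L_\st$-limit of this diagram coincide, and moreover the diagram is "of Verdier-sequence type" in each cube direction: each one-variable sub-diagram $\CC \to B\text{-}\mod \to B'\text{-}\mod \to \cdots$ is a split Verdier sequence up to the relevant complementary piece.

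The point where a localizing invariant enters, and the step I expect to be the main obstacle, is passing from the categorical limit to the limit after applying $F$. A general $F$ does not preserve limits, so one cannot just apply $F$ termwise. The standard device is to rewrite the totalization of the cube as an iterated cofiber/fiber construction: adelic descent for $\qcoh(X_\bs)$ can be packaged as a sequence of pullback (Milnor) squares coming from the skeletal filtration — each step glues $L_k\O_X$-modules to the "lower-dimensional" adeles along the completion, producing squares of stable categories that are both pullback and pushout in $\pr^\dual_\st$ (recollement / fracture squares). Since $F$ is a localizing invariant it sends Verdier (co)fiber sequences to (co)fiber sequences, hence sends such bicartesian squares to bicartesian squares of spectra/objects of $\E$; iterating over the $d+1$ filtration steps shows $F(\nuc(X_\bs))$ is the value of $F$ applied termwise to the cube, i.e. the limit in the statement. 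Concretely I would: (i) for fixed $d$, exhibit the fracture square relating $\nuc(X_\bs)$, $\nuc(L_d\O_X, X_\bs)$, the nuclear modules over the "lower" adeles $\nuc(L_{k_0}\ldots L_{k_{m-1}}\O_X, X_\bs)$ with $k_i < d$, and their common completion, as a pullback in $\pr^\dual_\st$ with one leg a localization; (ii) apply $F$ and the fact that $F$ inverts the swindle-free categories and sends localizations to fiber sequences to get a pullback in $\E$; (iii) induct on $d$, the base case $d=0$ (where $X$ is a finite discrete set of points and $\nuc(L_0\O_X, X_\bs) \simeq \nuc(X_\bs)$) being trivial. The subtlety to be careful about is that the relevant squares are genuinely pullback squares of \emph{dualizable} categories and strongly continuous functors, so one must check the right adjoints are themselves strongly continuous (which follows from the fully-faithfulness statements in Corollaries \ref{cor-nuc-is-retract}, \ref{cor-nuc-over-adelic-and-rig} and Proposition \ref{prop-upper-star-for-Zar-is-LL-localization}) before invoking that $F \colon \pr^\dual_\st \to \E$ sends them to pullbacks.
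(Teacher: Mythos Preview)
Your high-level strategy---split the cube along the skeletal filtration into iterated Verdier sequences, then use that $F$ sends those to fiber sequences---is exactly the paper's approach, and you have correctly isolated Corollaries \ref{cor-adelic-nuc-splits-as-tensor-product} and \ref{cor-nuc-of-comp-is-idempotent} as the algebraic inputs. The detour through rigidification to get the categorical limit (Theorem \ref{thm-adelic-descent-cats}) is also legitimate and is mentioned in the paper, though as you note it does not by itself give the statement for $F$.

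The gap is in your steps (i)--(iii). Your ``fracture square'' with corners $\nuc(X_\bs)$, $\nuc(L_d\O_X,X_\bs)$, ``the lower adeles'', and ``their common completion'' is not a square: the lower adeles form an entire $d$-cube, and you cannot apply $F$ to a limit over that cube before the induction is done. What is actually needed is the cubical reduction principle (Proposition 2.2.2 of \cite{kim2021adelicdescentktheory}): showing the $(d{+}1)$-cube is sent to a limit cube reduces to showing a specific $d$-cube of \emph{kernels} is. For that one must identify the kernel of the strongly continuous localization at the top level and show it carries its own adelic diagram of one size smaller. The paper does this by introducing, for each $k$, the torsion subcategory $\nuc_{\le k}(A_\bs) = \ker\bigl(\nuc(A_\bs)\to\nuc(\Ga(S_k^c,\O)_\bullet)\bigr)$ and proving (Proposition \ref{prop-main-tech-for-induction-step}) that $\nuc_{\le k}(A_\bs) \to \nuc_{\le k}(A_\bs)\otimes_{\nuc(A_\bs)}\nuc(L_kA,A_\bs)$ is a strongly continuous $\nuc(A_\bs)$-linear localization with kernel $\nuc_{\le k-1}(A_\bs)$, and that the $\nuc(A_\bs)$-action on $\nuc_{\le k}(A_\bs)$ already factors through $\nuc(A^\thicksim_{S_k},A_\bs)$. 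These torsion categories, and this kernel identification, are the missing ingredient; without them there is nothing for the inductive hypothesis to attach to. Relatedly, your induction on $\dim X$ with base case ``$X$ zero-dimensional'' does not match the inductive statement you need: the lower adeles of $X$ are not the adeles of any lower-dimensional scheme. The paper instead first reduces to affine $X=\spec(A)$ via Zariski descent (your proposal skips this), and then inducts on the level $k$ inside the fixed $A$, with base case the assertion that $\nuc_{\le 0}(A_\bs)$ is already a $\nuc(L_0A,A_\bs)$-module.
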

The strategy of the proof is similar to that of \cite{kim2021adelicdescentktheory},
and is basically to split to cubical diagram into fiber sequences, using the cubical reduction principle,
see Proposition 2.2.2 in \cite{kim2021adelicdescentktheory}.

\ssec{Reduction to the affine case}\label{ssec-reduction-to-affine}
We first reduce the proof of Theorem \ref{thm-nuc-adelic-descent} to the case
where the scheme $X$ is affine. Assume we have a decomposition \[
X \;=\; U\;\cup\; V
\] of $X$ into a union of a pair of Zariski open subschemes. By Zariski descent for solid quasi-coherent sheaves,
Theorem 9.8 of \cite{condensed}, the square \[\xymatrix{
L_{k_0}\ldots L_{k_m} \O_X - \mod_{\qcoh(X_\bs)} \ar[rr]\ar[d] && (L_{k_0}\ldots L_{k_m} \O_X)|_{V_\bs} - \mod_{\qcoh(V_\bs)} \ar[d] \\
(L_{k_0}\ldots L_{k_m} \O_X)|_{U_\bs} - \mod_{\qcoh(U_\bs)} \ar[rr] && (L_{k_0}\ldots L_{k_m} \O_X)|_{(U\cap V)_\bs} - \mod_{\qcoh((U\cap V)_\bs)}
}\] is a pull-back square in $\calg(\pr^L_\st)$. By passing to the nuclear objects and using
Proposition \ref{prop-upper-star-for-Zar-is-LL-localization}, we get that the square
\begin{equation}\label{nuc-pull-back-Zar-open-diag}
\xymatrix{
\nuc(L_{k_0}\ldots L_{k_m} \O_X, X_\bs) \ar[rr] \ar[d] && \nuc((L_{k_0}\ldots L_{k_m} \O_X)|_{V_\bs}, V_\bs) \ar[d] \\
\nuc((L_{k_0}\ldots L_{k_m} \O_X)|_{U_\bs}, U_\bs) \ar[rr] && \nuc((L_{k_0}\ldots L_{k_m} \O_X)|_{(U\cap V)_\bs}, (U\cap V)_\bs)
}\end{equation}
is a pull-back square in $\calg(\pr^L_\st)$. Namely, there is a functor from the category
$\nuc(L_{k_0}\ldots L_{k_m} \O_X, X_\bs)$ to the pull-back, and it is fully faithful by construction. The essential surjectivity
can be proved directly using Proposition \ref{prop-upper-star-for-Zar-is-LL-localization} and the base change
for solid quasi-coherent sheaves with respect to Zariski open embeddings, see Example 13.15 in \cite{analytic}.

We get that the induced map on kernels of the horizontal arrows in the diagram (\ref{nuc-pull-back-Zar-open-diag}) is an
equivalence. In addition, by Proposition \ref{prop-upper-star-for-Zar-is-LL-localization},
each functor in the square is a strongly continuous localization, and it follows that the square is mapped to a pull-back
square by any stable localizing invariant.
We conclude that, if the claim of Theorem \ref{thm-nuc-adelic-descent} holds for both $U$ and $V$, and for their intersection $U\cap V$,
then it also holds for $X$. Using this observation, we can run a standard induction on the size of an affine covering
reducing to the case of a separated $X$ first, and to the case of an affine $X$ after.

\ssec{The affine case}
We now assume that $X = \spec(A)$ for a commutative algebra $A$ of finite type over $\Z$. Denote its dimension by $d$.
For any $k \in 0\ldots d$, associated with the $k$-th skeleton $S_k \subseteq \spec(A)$,
we have a recollement \[\xymatrix{
A^\thicksim_{S_k}-\mod_{A_\bs} \ar@{^{(}->}[rr] && \mod_{A_\bs} \ar@<2ex>[ll]\ar@<-2ex>[ll] \ar[rr] && \Ga(S_k^c, \O)_\bullet - \mod \ar@<2ex>@{_{(}->}[ll]\ar@<-2ex>@{_{(}->}[ll] & .
}\]
We define a full subcategory \[
\nuc_{\le k} (A_\bs) \;\subseteq\; \nuc(A_\bs)
\] as the kernel \[\begin{split}
\nuc_{\le k} (A_\bs) \;\stackrel{\define}{=}&\; \mathrm{Ker} \Bigl( \nuc(A_\bs) \;\arr\; \nuc(\Ga(S_k^c, \O)_\bullet) \Bigr) \;. \\
\end{split}\]
We note that the ring $\Ga(S_k^c, \O)$ is discrete, and therefore,
the right adjoint to the induction functor along the map $A_\bs \arr \Ga(S_k^c, \O)_\bullet$ preserves nuclear objects.

We thus get a recollement \[\xymatrix{
\nuc_{\le k} (A_\bs) \ar@{^{(}->}[rr] && \nuc(A_\bs) \ar@<2ex>[ll]\ar@{}@<-2ex>[ll] \ar[rr] && \nuc(\Ga(S_k^c, \O)_\bullet) \ar@<2ex>@{_{(}->}[ll]\ar@{}@<-2ex>[ll] \\
&& M \ar@{}[u]|\invertin \ar@{|->}[rr] && \Ga(S_k^c, \O) \otimes_A M \ar@{}[u]|\invertin &.
}\] In particular, the full subcategory \[
\nuc_{\le k} (A_\bs) \;\subseteq\; \nuc(A_\bs) \;\simeq\; \mod_A
\] consists of those $A$-modules which are $S_k$-torsion. In other words, \[
\nuc_{\le k} (A_\bs) \;=\; \{ M \in \mod_{A} \;:\; M\otimes_A \Ga_{S_k} \;\stackrel{\simeq}{\arr}\; M \} ,
\] where \[
\Ga_{S_k} \;\simeq\; \stackrel[\substack{\text{closed } Z \subseteq \spec(A) \\ \dim(Z) \le k}]{}{\colim} \Ga_Z
\] denotes the local cohomology with supports on $S_k$.

\sssec{}
Because the map $A_\bs \;\arr\; \Ga(S_{k+1}^c, \O)_\bullet$ factors over the map $A_\bs \;\arr\; \Ga(S_{k}^c, \O)_\bullet$,
we get an inclusion \begin{equation}\label{idk-eq}
\nuc_{\le k}(A_\bs) \;\subseteq\; \nuc_{\le k+1} (A_\bs) \;.
\end{equation}
We also note that the full subcategory \[
\nuc_{\le k}(A_\bs) \;\subseteq\; \nuc(A_\bs)
\] is naturally a tensor ideal, and the inclusion (\ref{idk-eq}) is $\nuc(A_\bs)$-linear.

\sssec{}
Analyzing the full subcategories $\nuc_{\le k}(A_\bs) \;\subseteq\; \nuc(A_\bs)$ will allow us to split the cubical diagram
of Theorem \ref{thm-nuc-adelic-descent} into fiber sequences,
which will in turn allow us to prove the theorem. Specifically, the following
proposition is key to that splitting.
\begin{proposition}\label{prop-main-tech-for-induction-step}
\begin{enumerate}
\item The action \[
\nuc(A_\bs) \;\curvearrowright\; \nuc_{\le k} (A_\bs)
\] factors through the map \[
\nuc(A_\bs) \;\arr\; \nuc(A^\thicksim_{S_k}, A_\bs) \;.
\] We note that, by Corollary \ref{cor-nuc-of-comp-is-idempotent}, such factoring is a property rather then a structure.
\item The functor \[
\nuc_{\le k} (A_\bs) \;\arr\; \nuc_{\le k}(A_\bs) \otimes_{\nuc(A_\bs)} \nuc(L_kA, A_\bs)
\]
admits a fully faithful continuous $\nuc(A_\bs)$-linear right adjoint.
\item The natural map \[\begin{split}
\nuc_{\le k-1}(A_\bs) \;\arr\; \mathrm{Ker} &\Bigl( \nuc_{\le k} (A_\bs) \;\arr\; \nuc_{\le k}(A_\bs) \otimes_{\nuc(A_\bs)} \nuc(L_kA, A_\bs) \Bigr) \;.
\end{split}\] is an equivalence.
\end{enumerate}
\end{proposition}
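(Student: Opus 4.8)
Throughout I use the description, recorded in the text, of $\nuc_{\le k}(A_\bs)$ as the essential image of the smashing colocalization $-\otimes_{A_\bs}\Ga_{S_k}$ of $\nuc(A_\bs)\simeq\mod_A$ (so it is generated under colimits by the perfect complexes $K(\underline f;A)$ with $V(\underline f)\subseteq S_k$), the idempotence over $\nuc(A_\bs)$ of the rings $\nuc(A^\thicksim_{S_k},A_\bs)$ and $\nuc(L_kA,A_\bs)$ (Corollary \ref{cor-nuc-of-comp-is-idempotent}), and the rigidity of $\nuc(A_\bs)$; I would prove (2), (1), (3) in that order. Part (2) is formal: base change along the idempotent algebra $\nuc(L_kA,A_\bs)$ is, on any $\nuc(A_\bs)$-linear category, a smashing Bousfield localization (the endofunctor $\nuc(L_kA,A_\bs)\otimes_{\nuc(A_\bs)}-$ preserves colimits), so the functor in question admits a fully faithful right adjoint --- the inclusion of the local objects --- which is continuous because the localization is smashing, and $\nuc(A_\bs)$-linear because it is a priori lax $\nuc(A_\bs)$-linear and any such functor is strict by the rigidity of $\nuc(A_\bs)$ (\cite{GR1}, Lemma 9.3.6), just as in Corollary \ref{cor-nuc-is-retract}.

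For Part (1), the factoring is a property by Corollary \ref{cor-nuc-of-comp-is-idempotent}, so it suffices to show the canonical $\nuc(A_\bs)$-linear functor $\nuc_{\le k}(A_\bs)\arr\nuc(A^\thicksim_{S_k},A_\bs)\otimes_{\nuc(A_\bs)}\nuc_{\le k}(A_\bs)$ is an equivalence. Since $-\otimes_{A_\bs}\Ga_{S_k}$ is $\nuc(A_\bs)$-linear and preserves nuclear objects ($\Ga_{S_k}$ is a filtered colimit of discrete, hence nuclear, solid modules, and nuclear objects are closed under the monoidal product; see \ref{sssec-nuc-defn}), the right-hand side is the full subcategory of $S_k$-torsion objects of $\nuc(A^\thicksim_{S_k},A_\bs)$. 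It remains to see that the canonical functor $\nuc(A_\bs)=\mod_A\arr\nuc(A^\thicksim_{S_k},A_\bs)$ --- which is base change $M\mapsto M\otimes_{A_\bs}A^\thicksim_{S_k}$ --- restricts to an equivalence from $\nuc_{\le k}(A_\bs)$ onto that subcategory. Here one uses that $-\otimes_{A_\bs}A^\thicksim_{S_k}$ acts as the identity on $S_k$-torsion objects (equivalently, $\fib(A\arr A^\thicksim_{S_k})\otimes_{A_\bs}M\simeq 0$ for $S_k$-torsion $M$; Appendix A of \cite{brav2024beilinsonparshinadelessolidalgebraic}): then each generator $K(\underline f;A)$ of $\nuc_{\le k}(A_\bs)$ is already $S_k$-complete and is perfect, hence nuclear, over $A^\thicksim_{S_k}$, so the functor is fully faithful there ($A^\thicksim_{S_k}$-modules forming a full subcategory of $A_\bs$-modules) and, using Corollary \ref{cor-nuc-of-a-completion}, one checks that the $S_k$-torsion part of $\nuc(A^\thicksim_{S_k},A_\bs)$ is generated under colimits by these very complexes, giving essential surjectivity.

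For Part (3), the natural functor $\nuc_{\le k-1}(A_\bs)\arr\nuc_{\le k}(A_\bs)$ lands in the kernel: as $\nuc_{\le k-1}(A_\bs)$ is a $\nuc(A_\bs)$-submodule of $\nuc_{\le k}(A_\bs)$ it suffices that $\nuc_{\le k-1}(A_\bs)\otimes_{\nuc(A_\bs)}\nuc(L_kA,A_\bs)=0$, which holds since $L_kA\simeq\prod_{\dim\overline{\{\p\}}=k}\leftcomp A_\p$ is an algebra over $\Ga(S_{k-1}^c,\O)$, so every object of that tensor product is at once $S_{k-1}$-torsion and $S_{k-1}$-acyclic --- the latter because for a $k$-dimensional prime $\p$ no prime of $\spec A_\p$ lies in $S_{k-1}$, as $\q\subseteq\p$ forces $\dim\overline{\{\q\}}\ge\dim\overline{\{\p\}}$ (see \ref{sssec-adelic-ring-restricts}). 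To see the induced functor to $\mathrm{Ker}(\ldots)$ is an equivalence I would compare full subcategories of $\mod_A$: by Part (2) and idempotence, $M\in\nuc_{\le k}(A_\bs)$ lies in the kernel iff its $L_k$-localization vanishes, which by the product description of $L_kA$ means $\leftcomp M_\p\simeq 0$ for every $k$-dimensional $\p$; for such an $M$ (already $S_k$-torsion) $M_\p$ is $\p A_\p$-adically torsion over $A_\p$ --- again since the only point of $\spec A_\p$ in $S_k$ is its closed point --- and derived completion is conservative on torsion modules (Greenlees--May), so the condition becomes $M_\p\simeq 0$ for all such $\p$; finally, for $S_k$-torsion $M$ this is exactly the assertion that $M$ is $S_{k-1}$-torsion, by the fiber sequence of the skeletal filtration isolating the dimension-exactly-$k$ stratum (\cite{brav2024beilinsonparshinadelessolidalgebraic}, Theorem 3.2.1), legitimate because the $k$-dimensional points are pairwise non-specializing. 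Hence $\nuc_{\le k-1}(A_\bs)$ and $\mathrm{Ker}(\ldots)$ coincide as full subcategories of $\mod_A$, and the natural functor is an equivalence.

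The part I expect to be the main obstacle is the torsion--completion bookkeeping feeding Parts (1) and (3): the completeness of $S_k$-supported perfect complexes and their nuclearity over $A^\thicksim_{S_k}$ in (1), and the conservativity of completion on torsion modules over the local rings together with the clean structure of the top stratum of the skeletal filtration in (3). These are exactly the inputs to be extracted from Appendix A and Section 3.2 of \cite{brav2024beilinsonparshinadelessolidalgebraic}; the rest is formal manipulation of idempotent algebras and rigid module categories.
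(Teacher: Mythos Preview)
Your proposal is essentially correct and in several places cleaner than the paper's route, though the organization and one step in Part~(3) differ in instructive ways.

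\textbf{Part (2).} Your argument is genuinely simpler than the paper's. You observe that $\nuc(L_kA,A_\bs)$ is idempotent over $\nuc(A_\bs)$ (Corollary~\ref{cor-nuc-of-comp-is-idempotent} with $T=S_k$, $E=S_{k-1}^c$), so base change along it is a smashing localization on any $\nuc(A_\bs)$-module, and rigidity handles linearity. The paper instead first proves Part~(1), then proves a separate Lemma~\ref{lem-main-prop-part2} showing that $\nuc(A^\thicksim_{S_k},A_\bs)\to\nuc(L_kA,A_\bs)$ has a fully faithful continuous linear right adjoint (by checking that the forgetful functor preserves nuclearity of the generator), and finally tensors with $\nuc_{\le k}(A_\bs)$ over $\nuc(A^\thicksim_{S_k},A_\bs)$. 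Your shortcut bypasses all of this.

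\textbf{Part (1).} Both approaches reduce to the same computation. The paper checks directly that the generator $(\oplus_\N A)^\thicksim_{S_k}$ of $\nuc(A^\thicksim_{S_k},A_\bs)$ acts on the generators $A/I$ of $\nuc_{\le k}(A_\bs)$ by sending them to $\oplus_\N A/I$, via an explicit manipulation using pseudo-compactness and finite projective dimension of $\Ga_{S_k}$. Your version, showing that $-\otimes_{A_\bs}A^\thicksim_{S_k}$ is the identity on $S_k$-torsion objects by checking it on Koszul generators, amounts to the same thing.

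\textbf{Part (3).} Your argument works, but the invocation of Greenlees--May is slightly off. From $M\otimes_{A_\bs}L_kA=0$ you correctly extract $M\otimes_{A_\bs}\leftcomp A_\p=0$ by retraction. But this quantity is not the derived completion $(M_\p)^\comp_\p$ in general (e.g.\ $M=\Q_p/\Z_p$ over $\Z$), so ``completion is conservative on torsion'' is not quite the right tool. What actually makes your argument go through is the same fact you used in Part~(1): for $\p$-torsion $N$ one has $N\otimes_{(A_\p)_\bs}\leftcomp A_\p\simeq N$, since this holds on Koszul generators and passes to colimits. So $M\otimes_{A_\bs}\leftcomp A_\p=0$ directly gives $M_\p=0$. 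The paper avoids this pointwise analysis entirely: it observes that the composite $\nuc_{\le k}(A_\bs)\to\nuc(\Ga(S_{k-1}^c,\O)_\bullet)$ factors through the full subcategory $\nuc((L_kA)_\bullet)$, so its kernel---which is $\nuc_{\le k-1}(A_\bs)$ by definition---coincides with the kernel of the map to $\nuc((L_kA)_\bullet)$; then it factors the latter as $\nuc_{\le k}(A_\bs)\to\nuc_{\le k}(A_\bs)\otimes_{\nuc(A_\bs)}\nuc((L_kA)_\bullet)\to\nuc((L_kA)_\bullet)$ with the second arrow fully faithful by dualizability of $\nuc((L_kA)_\bullet)$ over $\nuc(A_\bs)$, and finishes via $\nuc(L_kA,A_\bs)\simeq\nuc((L_kA)_\bullet)$.
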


\sssec{Proof of Theorem \ref{thm-nuc-adelic-descent}}
We first use Proposition \ref{prop-main-tech-for-induction-step}
to finish the proof of the theorem, and then supply a proof of the proposition.
We prove by induction on $k$ that,
for any $k \in 0..d$, the natural map \[
F\Bigl(\nuc_{\le k} (A_\bs) \Bigr) \;\arr\; \stackrel[\substack{0 \le k_0 < \ldots < k_m \le k \\ m \ge 0}]{}{\lim} F\Bigl(\nuc_{\le k}(A_\bs) \otimes_{\nuc(A_\bs)} \nuc(L_{k_0}A, A_\bs) \otimes_{\nuc(A_\bs)} \ldots \otimes_{\nuc(A_\bs)} \nuc(L_{k_m}A, A_\bs) \Bigr)
\] is an isomorphism. Note that the claim of the theorem is a particular case of this statement since,
by Corollary \ref{cor-adelic-nuc-splits-as-tensor-product}, we have an equivalence \[
\nuc(L_{k_0}A, A_\bs) \otimes_{\nuc(A_\bs)} \ldots \otimes_{\nuc(A_\bs)} \nuc(L_{k_m}A, A_\bs) \;\simeq\; \nuc(L_{k_0} \ldots L_{k_m} A, A_\bs) \;.
\]

\smallskip
\smallskip
\smallskip
\noindent \underline{Base: $k = 0$}. It suffices to prove that the functor \[
\nuc_{\le 0} (A_\bs) \;\arr\; \nuc_{\le 0} (A_\bs) \otimes_{\nuc(A_\bs)} \nuc(L_0A, A_\bs)
\] is an equivalence. That immediately follows from the first claim of Proposition \ref{prop-main-tech-for-induction-step} in view of
the isomorphism $L_0A \;\simeq\; A^\thicksim_{S_0}$.

\smallskip
\smallskip
\smallskip
\noindent \underline{The induction step}. We assume that the claim holds for some $k \ge 0$.
By the second claim of Proposition \ref{prop-main-tech-for-induction-step}, the functor
\begin{equation}\label{thm-ind-step-eq0}
\nuc_{\le k+1} (A_\bs) \;\arr\; \nuc_{\le k+1}(A_\bs) \otimes_{\nuc(A_\bs)} \nuc(L_{k+1}A, A_\bs)
\end{equation}
is a strongly continuous $\nuc(A_\bs)$-linear localization, and, in particular, it remains a strongly continuous localization
after tensoring with
\[
\nuc(L_{k_0}A, A_\bs) \otimes_{\nuc(A_\bs)} \ldots \otimes_{\nuc(A_\bs)} \nuc(L_{k_m}A, A_\bs)
\]
for any tuple $0 \le k_0 < \ldots < k_m \le k$.

Let us denote by $\K$ the kernel of the functor (\ref{thm-ind-step-eq0}).
By the cubical reduction
principle, see Proposition 2.2.2 in \cite{kim2021adelicdescentktheory},
it suffices to prove that the map \[
F\Bigl(\K \Bigr) \;\arr\; \stackrel[\substack{0 \le k_0 < \ldots < k_m \le k \\ m \ge 0}]{}{\lim} F\Bigl(\K \otimes_{\nuc(A_\bs)} \nuc(L_{k_0}A, A_\bs) \otimes_{\nuc(A_\bs)} \ldots \otimes_{\nuc(A_\bs)} \nuc(L_{k_m}A, A_\bs) \Bigr)
\] is an isomorphism. But that is exactly the induction hypothesis because, by the last claim of
Proposition \ref{prop-main-tech-for-induction-step}, we have a $\nuc(A_\bs)$-linear equivalence \[
\K \;\simeq\; \nuc_{\le k} (A_\bs) \;.
\] Theorem \ref{thm-nuc-adelic-descent} is now proved.

\ssec{Proof of Proposition \ref{prop-main-tech-for-induction-step}}

Much like the proposition itself, the proof consists of several parts, which are recorded as separate statements.
\begin{lemma}\label{lem-main-prop-part1}
The action \[
\nuc(A_\bs) \;\curvearrowright\; \nuc_{\le k} (A_\bs)
\] factors through the map \[
\nuc(A_\bs) \;\arr\; \nuc(A^\thicksim_{S_k}, A_\bs) \;.
\] We note that, by Corollary \ref{cor-nuc-of-comp-is-idempotent}, such factoring is a property rather then a structure.
\end{lemma}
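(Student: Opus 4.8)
The plan is to reduce the claim to the statement that every object of $\nuc_{\le k}(A_\bs)$, viewed inside $\mod_A \simeq \nuc(A_\bs)$, is already a module over the idempotent algebra $A^\thicksim_{S_k} \in \calg(\qcoh(X_\bs))$ — or rather over its nuclear counterpart. Recall that under the identification $\nuc(A_\bs) \simeq \mod_A$, the subcategory $\nuc_{\le k}(A_\bs)$ consists exactly of the $S_k$-torsion $A$-modules, i.e. those $M$ with $M \otimes_A \Ga_{S_k} \xrightarrow{\ \simeq\ } M$. I would first observe that, since $\nuc(A^\thicksim_{S_k}, A_\bs)$ is idempotent over $\nuc(A_\bs)$ by Corollary \ref{cor-nuc-of-comp-is-idempotent}, to say that the action $\nuc(A_\bs) \curvearrowright \nuc_{\le k}(A_\bs)$ factors through $\nuc(A^\thicksim_{S_k}, A_\bs)$ is precisely to say that the image of the localization functor $\nuc(A_\bs) \arr \nuc(A^\thicksim_{S_k}, A_\bs)$ acts invertibly — equivalently, that the tensor ideal $\nuc_{\le k}(A_\bs)$ is contained in the tensor ideal generated by $\nuc(A^\thicksim_{S_k}, A_\bs)$, i.e. in the essential image of $\nuc(A^\thicksim_{S_k}, A_\bs) - \mod_{\nuc(A_\bs)} \arr \nuc(A_\bs)$. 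Because $\nuc(A^\thicksim_{S_k}, A_\bs)$ is smashing, this essential image is the full subcategory on which the Bousfield colocalization (given by tensoring with the kernel idempotent) is an equivalence.

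Next I would identify that kernel idempotent concretely. The complementary open is $S_k^c$, and the recollement displayed just before the lemma exhibits $\nuc(A^\thicksim_{S_k}, A_\bs)$ as the kernel of $\nuc(A_\bs) \arr \nuc(\Ga(S_k^c, \O)_\bullet)$; but $\Ga(S_k^c, \O)$ is a discrete ring, so on the level of underlying classical module categories this is the usual open-closed recollement of $\mod_A$ along the open $\spec(A) \setminus S_k$, with the closed part being precisely the $S_k$-torsion modules. Concretely, tensoring with $\Ga_{S_k}$ (the local cohomology functor with supports on $S_k$) is the colocalization onto $\nuc(A^\thicksim_{S_k}, A_\bs)$ viewed inside $\mod_A$. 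So the factoring claim amounts to: for $M \in \nuc_{\le k}(A_\bs)$, the natural map $M \otimes_A \Ga_{S_k} \arr M$ is an equivalence — which is the defining property of $\nuc_{\le k}(A_\bs)$ recorded in the excerpt.

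The step I expect to be the main obstacle is making rigorous the passage between ``the action factors through $\nuc(A^\thicksim_{S_k}, A_\bs)$'' as a statement in $\pr^L_\st$ about module categories and ``every object is $S_k$-torsion'' as a statement inside $\mod_A$; this requires knowing that the equivalence $\nuc(A_\bs) \simeq \mod_A$ is symmetric monoidal and that it carries $\nuc(A^\thicksim_{S_k}, A_\bs)$ to the smashing ideal of $S_k$-torsion objects compatibly with the colocalization functors. For that I would invoke Corollary \ref{cor-nuc-of-a-completion} (which identifies $\nuc(A^\thicksim_{S_k}, A_\bs)$ with the rigidification and pins down its generator $(\bigoplus_\N A)^\thicksim_{S_k}$) together with the recollement above, reducing everything to the classical $S_k$-torsion/$S_k$-local recollement on $\mod_A$, where the containment of the torsion part in the smashing ideal is standard. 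The remaining assertion — that the factoring is a property, not a structure — is then immediate from the idempotence in Corollary \ref{cor-nuc-of-comp-is-idempotent}, since maps out of an idempotent algebra form a subterminal space.
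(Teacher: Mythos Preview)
There is a real gap. Your argument hinges on the assertion that ``the recollement displayed just before the lemma exhibits $\nuc(A^\thicksim_{S_k}, A_\bs)$ as the kernel of $\nuc(A_\bs) \arr \nuc(\Ga(S_k^c, \O)_\bullet)$,'' and hence that the colocalization onto modules over the idempotent algebra $\nuc(A^\thicksim_{S_k}, A_\bs)$ is $\Ga_{S_k}\otimes_A(-)$. But the recollement in the paper identifies that kernel as $\nuc_{\le k}(A_\bs)$, \emph{by definition}; it says nothing about $\nuc(A^\thicksim_{S_k}, A_\bs)$. You are in effect assuming that the smashing localization $\mod_A \simeq \nuc(A_\bs) \to \nuc(A^\thicksim_{S_k}, A_\bs)$ has kernel exactly the $S_k$-local modules (equivalently, that its right adjoint has essential image the $S_k$-torsion modules). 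If that were already known, the lemma would be trivial --- but nothing in the paper up to this point establishes it, and Corollary~\ref{cor-nuc-of-a-completion} only pins down a generator of $\nuc(A^\thicksim_{S_k}, A_\bs)$ inside $A^\thicksim_{S_k}-\mod_{A_\bs}$, not the image of the right adjoint inside $\mod_A$. You yourself flag this passage as the ``main obstacle,'' but the sketch you give for crossing it (invoke the recollement and Corollary~\ref{cor-nuc-of-a-completion}) does not work as stated, precisely because the recollement concerns $\nuc_{\le k}(A_\bs)$ rather than $\nuc(A^\thicksim_{S_k}, A_\bs)$.

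The paper's proof sidesteps this entirely. Instead of identifying the smashing localization abstractly, it views $\nuc_{\le k}(A_\bs)$ as a full subcategory of $A^\thicksim_{S_k}-\mod_{A_\bs}$ and checks on generators that it is stable under the $\nuc(A^\thicksim_{S_k}, A_\bs)$-action: one computes directly that
\[
A/I \;\otimes_{A_\bs}\; \bigl(\textstyle\bigoplus_\N A\bigr)^\thicksim_{S_k} \;\simeq\; \textstyle\bigoplus_\N A/I
\]
for any ideal $I$ with $\dim(A/I)\le k$, using that $A/I$ is pseudo-compact over $A_\bs$, that $\Ga_{S_k}$ has finite projective dimension, and that $\Ga_{S_k}\otimes_A A/I \simeq A/I$. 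If you supply this computation, your argument becomes correct --- but at that point it is the paper's proof.
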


\begin{proof}
We prove that the full subcategory \[
\nuc_{\le k} (A_\bs) \;\subset\; A^\thicksim_{S_k}-\mod_{A_\bs}
\] is stable under the action of $\nuc(A^\thicksim_{S_k}, A_\bs)$.
We note that the full subcategory \[
\nuc_{\le k}(A_\bs) \;\subseteq\; \nuc(A_\bs) \;\simeq\; \mod_A
\] consists of $S_k$-torsion modules, i.e. those $M \in \mod_A$ satisfying $\Ga_{S_k} \otimes_A M \stackrel{\simeq}{\arr} M$.
In particular, the category $\nuc_{\le k}(A_\bs)$ is generated under colimits by objects of the form $A/I$
where $I \subset A$ is an ideal such that $\dim(A/I) \le k$. Also, by Corollary \ref{cor-nuc-of-a-completion},
the category $\nuc(A^\thicksim_{S_k}, A_\bs)$ is generated under colimits
by the object $(\oplus_\N A)^\thicksim_{S_k}$.
It suffices to prove that the tensor product \[
A/I \otimes_{A_\bs} \bigl( \bigoplus_\N A \bigr)^\thicksim_{S_k}
\] lies inside the subcategory $\nuc_{\le k}(A_\bs)$, which is shown by the following computation, analogous to the computation
from the proof of Proposition A.1.4 of \cite{brav2024beilinsonparshinadelessolidalgebraic}.
\[\begin{split}
A/I \otimes_{A_\bs} \bigl( \bigoplus_\N A \bigr)^\thicksim_{S_k} &\;\simeq\; A/I \otimes_{A_\bs} \uhom_{A_\bs} (\Ga_{S_k}, \oplus_\N A) \;\simeq \\
&\;\simeq\; \uhom_{(A/I)_\bs} (\Ga_{S_k} \otimes_A A/I, \oplus_\N A/I) \;\simeq \\
&\;\simeq\; \uhom_{(A/I)_\bs} (A/I, \oplus_\N A/I) \;\simeq \\
&\;\simeq\; \oplus_\N A/I \;,
\end{split}\] where, in the first step, we use that $A/I$ is pseudo compact as an $A_\bs$ module,
hence the functor $A/I \otimes_{A_\bs} - $ commutes with right t-bounded products
(\cite{brav2024beilinsonparshinadelessolidalgebraic}, Lemma 1.1.7), and that
$\Ga_{S_k}$ has finite projective dimension as an $A$-module 
(\cite{brav2024beilinsonparshinadelessolidalgebraic}, Proposition 1.6.2). In the second step,
we use that, because $A/I$ is $S_k$-torsion, we have an isomorphism $\Ga_{S_k} \otimes_A A/I \simeq A/I$.
\end{proof}

We now move on to the second claim of Proposition \ref{prop-main-tech-for-induction-step}.
We first note that, by Lemma \ref{lem-main-prop-part1} and Corollary \ref{cor-nuc-of-comp-is-idempotent}, it suffices to prove
that the functor \[
\nuc(A^\thicksim_{S_k}, A_\bs) \;\arr\; \nuc(L_kA, A_\bs)
\] admits a continuous fully faithful $\nuc(A^\thicksim_{S_k}, A_\bs)$-linear right adjoint. Indeed, if this adjoint exists,
then we can tensor it with $\nuc_{\le k}(A_\bs)$ over $\nuc(A^\thicksim_{S_k}, A_\bs)$, and, using the isomorphism \[
\nuc_{\le k}(A_\bs) \otimes_{\nuc(A^\thicksim_{S_k}, A_\bs)} \nuc(L_kA, A_\bs) \;\simeq\; \nuc_{\le k}(A_\bs) \otimes_{\nuc(A_\bs)} \nuc(L_kA, A_\bs) \;,
\] which holds because $\nuc(A^\thicksim_{S_k}, A_\bs)$ is idempotent over $\nuc(A_\bs)$,
we get the second claim of Proposition \ref{prop-main-tech-for-induction-step}.
\begin{lemma}\label{lem-main-prop-part2}
The functor \[
\nuc(A^\thicksim_{S_k}, A_\bs) \;\arr\; \nuc(L_kA, A_\bs)
\] admits a fully faithful continuous $\nuc(A^\thicksim_{S_k}, A_\bs)$-linear right adjoint.
\end{lemma}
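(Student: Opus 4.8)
The plan is to realise the functor of the lemma as the base-change functor along the map of idempotent $A_\bs$-algebras $A^\thicksim_{S_k} \arr L_kA$, restricted to nuclear objects, and to show that this functor exhibits $\nuc(L_kA, A_\bs)$ as an idempotent algebra over $\nuc(A^\thicksim_{S_k}, A_\bs)$ in $\pr^L_\st$; equivalently, that the multiplication functor
\[
\nuc(L_kA, A_\bs) \otimes_{\nuc(A^\thicksim_{S_k}, A_\bs)} \nuc(L_kA, A_\bs) \;\arr\; \nuc(L_kA, A_\bs)
\]
is an equivalence. As in Corollary \ref{cor-nuc-of-comp-is-idempotent} and the formalism used there, once this is known the base-change functor automatically admits a fully faithful continuous $\nuc(A^\thicksim_{S_k}, A_\bs)$-linear right adjoint: fully faithful and continuous because a smashing localization in $\pr^L_\st$ is reflective with colimit-closed essential image, and linear because the automatically lax linear right adjoint of a symmetric monoidal functor is strictly linear over the rigid category $\nuc(A^\thicksim_{S_k}, A_\bs)$, by \cite{GR1}, Lemma 9.3.6, exactly as in the proof of Corollary \ref{cor-nuc-is-retract}. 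A small preliminary is that base change along $A^\thicksim_{S_k} \arr L_kA$ does preserve nuclear objects, so that the functor of the lemma makes sense: since $\nuc(A^\thicksim_{S_k}, A_\bs)$ is generated under colimits by $(\bigoplus_\N A)^\thicksim_{S_k}$ (Corollary \ref{cor-nuc-of-a-completion}) and base change is colimit-preserving, it is enough to note that $(\bigoplus_\N A)^\thicksim_{S_k} \otimes_{A^\thicksim_{S_k}} L_kA \simeq (\bigoplus_\N A)^\thicksim_{S_k, S^c_{k-1}}$, which is the colimit-generator of $\nuc(L_kA, A_\bs)$, again by Corollary \ref{cor-nuc-of-a-completion}.

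The geometric input making the argument run is that $L_kA$ is idempotent not merely over $A_\bs$ but already over $A^\thicksim_{S_k}$. Indeed $L_kA = A^\thicksim_{S_k, S^c_{k-1}} \simeq (A^\thicksim_{S_k})^\thicksim_{S^c_{k-1}}$, and completion at the generalization-closed subset $S^c_{k-1}$ is a smashing localization — it is the functor $- \otimes_{A_\bs} \Ga(S^c_{k-1}, \O)_\bullet$, hence continuous and symmetric monoidal, with $\Ga(S^c_{k-1}, \O)$ idempotent over $A$ — so iterating it changes nothing and $L_kA \otimes_{A^\thicksim_{S_k}} L_kA \simeq L_kA$. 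Hence
\[
\bigl(L_kA - \mod_{A_\bs}\bigr) \otimes_{A^\thicksim_{S_k} - \mod_{A_\bs}} \bigl(L_kA - \mod_{A_\bs}\bigr) \;\simeq\; \bigl(L_kA \otimes_{A^\thicksim_{S_k}} L_kA\bigr) - \mod_{A_\bs} \;\simeq\; L_kA - \mod_{A_\bs} \;.
\]
Now one copies the proof of Corollary \ref{cor-nuc-of-comp-is-idempotent}: by Corollary \ref{cor-nuc-over-adelic-and-rig} (applied to $X = \spec(A)$), the inclusion $\nuc(L_kA, A_\bs) \hookrightarrow L_kA - \mod_{A_\bs}$ is a retract in $\pr^L_\st$ via a continuous $\nuc(L_kA, A_\bs)$-linear — hence $\nuc(A^\thicksim_{S_k}, A_\bs)$-linear — right adjoint, and by Corollary \ref{cor-nuc-is-retract} the inclusion $\nuc(A^\thicksim_{S_k}, A_\bs) \hookrightarrow A^\thicksim_{S_k} - \mod_{A_\bs}$ is an $\nuc(A^\thicksim_{S_k}, A_\bs)$-linear retract. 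Tensoring these retraction data over $\nuc(A^\thicksim_{S_k}, A_\bs)$, respectively $A^\thicksim_{S_k} - \mod_{A_\bs}$, exhibits $\nuc(L_kA, A_\bs) \otimes_{\nuc(A^\thicksim_{S_k}, A_\bs)} \nuc(L_kA, A_\bs)$ as a retract in $\pr^L_\st$ of $L_kA - \mod_{A_\bs}$; in particular the multiplication functor is fully faithful. It is visibly essentially surjective, so it is an equivalence.

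I expect the main difficulty to be the linearity and retract bookkeeping in the last step — one must ensure the right adjoints being tensored are genuinely $\nuc(A^\thicksim_{S_k}, A_\bs)$-linear, so that tensoring the retraction diagrams over $\nuc(A^\thicksim_{S_k}, A_\bs)$ is legitimate and identifies a retract of the tensor product of the ambient module categories — together with the easy but load-bearing observation that $L_kA$ is idempotent over $A^\thicksim_{S_k}$ rather than only over $A_\bs$. As an alternative avoiding the idempotent-algebra formalism, one can argue directly that the forgetful functor $L_kA - \mod_{A_\bs} \hookrightarrow A^\thicksim_{S_k} - \mod_{A_\bs}$, fully faithful and continuous precisely because $L_kA$ is idempotent over $A^\thicksim_{S_k}$, preserves nuclear objects and hence restricts to the desired right adjoint: as $\nuc(L_kA, A_\bs)$ is generated under colimits by $(\bigoplus_\N A)^\thicksim_{S_k, S^c_{k-1}}$, it suffices to check that this object is nuclear when viewed over $A^\thicksim_{S_k}$, and by the computation in the lemma preceding Corollary \ref{cor-nuc-of-a-completion} this reduces to the isomorphism
\[
\Bigl(\bigoplus_\N A\Bigr)^\thicksim_{S_k, S^c_{k-1}} \otimes_{A^\thicksim_{S_k}} \Bigl(\bigoplus_\N A\Bigr)^\thicksim_{S_k, S^c_{k-1}} \;\simeq\; \Bigl(\bigoplus_{\N\times\N} A\Bigr)^\thicksim_{S_k, S^c_{k-1}} \;,
\]
which follows from Corollary A.2.7 of \cite{brav2024beilinsonparshinadelessolidalgebraic} for the completion at $S_k$ and from the continuity and monoidality of completion at $S^c_{k-1}$.
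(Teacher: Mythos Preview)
Your alternative approach at the end is exactly the paper's proof: one checks directly that the forgetful functor $L_kA - \mod_{A_\bs} \hookrightarrow A^\thicksim_{S_k} - \mod_{A_\bs}$ (fully faithful and continuous since $L_kA$ is idempotent over $A^\thicksim_{S_k}$) preserves nuclear objects, by verifying that the generator $(\bigoplus_\N A)^\thicksim_{S_k, S^c_{k-1}}$ is nuclear over $A^\thicksim_{S_k}$. The paper writes the relevant isomorphism with first tensor factor $(\bigoplus_\N A)^\thicksim_{S_k}$ rather than your $(\bigoplus_\N A)^\thicksim_{S_k, S^c_{k-1}}$ --- that is the literal shape of the nuclearity check $P^\vee \otimes M \to \uhom(P,M)$ for the compact generator $P$ of $A^\thicksim_{S_k}-\mod_{A_\bs}$ --- but the two expressions coincide, since the second factor is already $S^c_{k-1}$-local and absorbs the extra $\Ga(S^c_{k-1},\O)$.

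Your primary approach via idempotence has a gap at the final step. Showing that $\nuc(L_kA, A_\bs)$ is an idempotent algebra over $\nuc(A^\thicksim_{S_k}, A_\bs)$ in $\pr^L_\st$ tells you that the category of $\nuc(L_kA, A_\bs)$-\emph{modules} inside $\nuc(A^\thicksim_{S_k}, A_\bs)-\mod(\pr^L_\st)$ is a smashing localization; it does not by itself say that the unit functor $\nuc(A^\thicksim_{S_k}, A_\bs) \to \nuc(L_kA, A_\bs)$ has a fully faithful \emph{continuous} right adjoint. Your phrase ``a smashing localization in $\pr^L_\st$ is reflective with colimit-closed essential image'' conflates ``idempotent algebra object in $\pr^L_\st$'' with ``smashing localization of the category $\CC$''. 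One can close this gap --- for instance by observing that both categories are rigid, so the symmetric monoidal functor between them has continuous right adjoint, after which idempotence forces full faithfulness --- but your alternative paragraph already supplies that right adjoint concretely as the restricted forgetful functor, which makes the idempotence computation unnecessary for the lemma as stated.
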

\begin{proof}
If we exclude the linearity claim for a moment, then it suffices to prove that the right adjoint to the functor
\[
A^\thicksim_{S_k}-\mod_{A_\bs} \;\arr\; L_kA - \mod_{A_\bs}
\]
preserves nuclear objects.
By Corollary \ref{cor-nuc-over-adelic-and-rig}, it suffices to prove that the object \[
\uhom_{A_\bs} \bigl(\prod_{\N}A, L_kA \bigr) \;\simeq\; \prod_{\substack{\p\in \spec(A) \\ \dim(A/\p) = k}} \leftcomp\:\bigl( \bigoplus_\N A \bigr)_\p
\] is a nuclear $A^\thicksim_{S_k}$-module. The latter means that the map \[
\bigl( \bigoplus_\N A \bigr)^\thicksim_{S_k} \otimes_{A_\bs} \prod_{\substack{\p\in \spec(A) \\ \dim(A/\p) = k}} \leftcomp\:\bigl( \bigoplus_\N A \bigr)_\p \;\arr\; \prod_{\substack{\p\in \spec(A) \\ \dim(A/\p) = k}} \leftcomp\:\bigl( \bigoplus_{\N\times\N} A \bigr)_\p
\] is an isomorphism, which easily follows from Proposition A.2.1, \cite{brav2024beilinsonparshinadelessolidalgebraic}. The linearity part
also follows from that same map being an isomorphism.
\end{proof}

\begin{corollary}\label{cor-main-right-adj}
The inclusion \begin{equation}\label{cor-main-right-adj-eq}
\nuc_{\le k} (A_\bs) \;\arr\; \nuc_{\le k}(A_\bs) \otimes_{\nuc(A_\bs)} \nuc(L_kA, A_\bs)
\end{equation}
admits a fully faithful continuous $\nuc(A_\bs)$-linear right adjoint. 
\end{corollary}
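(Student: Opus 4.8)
The plan is to deduce this from Lemma~\ref{lem-main-prop-part2} by a base-change argument, exactly as outlined just before the statement. First I would invoke Lemma~\ref{lem-main-prop-part1} to see that the $\nuc(A_\bs)$-action on $\nuc_{\le k}(A_\bs)$ factors through $R := \nuc(A^\thicksim_{S_k}, A_\bs)$, this factoring being canonical because $R$ is idempotent over $\nuc(A_\bs)$ (Corollary~\ref{cor-nuc-of-comp-is-idempotent}). Since the solid ring map $A_\bs \arr L_kA$ passes through $A^\thicksim_{S_k}$, so does $\nuc(A_\bs) \arr \nuc(L_kA, A_\bs)$, making $\nuc(L_kA, A_\bs)$ an $R$-algebra; hence by idempotence of $R$ over $\nuc(A_\bs)$ one has
\[
\nuc_{\le k}(A_\bs) \otimes_{\nuc(A_\bs)} \nuc(L_kA, A_\bs) \;\simeq\; \nuc_{\le k}(A_\bs) \otimes_{R} \nuc(L_kA, A_\bs),
\]
and the functor \eqref{cor-main-right-adj-eq} gets identified with $\id_{\nuc_{\le k}(A_\bs)} \otimes_R \bigl(R \arr \nuc(L_kA, A_\bs)\bigr)$.

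Next I would feed in Lemma~\ref{lem-main-prop-part2}, which says that $R \arr \nuc(L_kA, A_\bs)$ is a strongly continuous $R$-linear localization: it preserves colimits, and its right adjoint is colimit-preserving, fully faithful, and $R$-linear. The last step is then the general observation that a strongly continuous localization is stable under cobase change of commutative algebra objects in $\pr^L_\st$. Concretely, since the functor $R \arr \nuc(L_kA, A_\bs)$ and its right adjoint both lie in $\pr^L_\st$, tensoring over $R$ with $\nuc_{\le k}(A_\bs)$ carries the adjunction to an adjunction in $\pr^L_\st$, and the counit---an equivalence---remains an equivalence after tensoring; so the base-changed right adjoint of \eqref{cor-main-right-adj-eq} is continuous and fully faithful. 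It is $R$-linear by construction, hence $\nuc(A_\bs)$-linear because the $\nuc(A_\bs)$-action is the restriction of the $R$-action along $\nuc(A_\bs) \arr R$.

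Essentially all the substantive work has already been done in Lemmas~\ref{lem-main-prop-part1} and~\ref{lem-main-prop-part2}---in particular the explicit check that the right adjoint to $A^\thicksim_{S_k} - \mod_{A_\bs} \arr L_kA - \mod_{A_\bs}$ preserves nuclearity. The only point in the present corollary that needs any care is the stability of strongly continuous localizations under base change, which is a standard and formal property of $\pr^L_\st$; so I expect this to be a short argument and do not anticipate a real obstacle.
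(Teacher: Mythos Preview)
Your proposal is correct and follows essentially the same approach as the paper: use Lemma~\ref{lem-main-prop-part1} together with the idempotence from Corollary~\ref{cor-nuc-of-comp-is-idempotent} to rewrite the functor~\eqref{cor-main-right-adj-eq} as $\nuc_{\le k}(A_\bs) \otimes_{\nuc(A^\thicksim_{S_k}, A_\bs)} \bigl(\nuc(A^\thicksim_{S_k}, A_\bs) \arr \nuc(L_kA, A_\bs)\bigr)$, and then apply Lemma~\ref{lem-main-prop-part2}. Your additional remarks on why base change preserves the strongly continuous localization are more explicit than the paper's, but the argument is the same.
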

\begin{proof}
By Lemma \ref{lem-main-prop-part1} and Corollary \ref{cor-nuc-of-comp-is-idempotent},
the functor (\ref{cor-main-right-adj-eq}) is identified with the functor \[
\nuc_{\le k} (A_\bs) \otimes_{\nuc(A^\thicksim_{S_k}, A_\bs)} \Bigl( \nuc(A^\thicksim_{S_k}, A_\bs) \;\arr\; \nuc(L_kA, A_\bs) \Bigr) \;.
\] The claim now follows from Lemma \ref{lem-main-prop-part2}.
\end{proof}

We now prove the third and final claim of Proposition \ref{prop-main-tech-for-induction-step}, packaged into the following lemma.
\begin{lemma}
The functor \[
\nuc_{\le k-1} (A_\bs) \;\arr\; \nuc_{\le k} (A_\bs)
\] factors over the kernel \[
\mathrm{Ker} \Bigl( \nuc_{\le k} (A_\bs) \;\arr\; \nuc_{\le k}(A_\bs) \otimes_{\nuc(A_\bs)} \nuc(L_kA, A_\bs) \Bigr) \;.
\]
The resulting functor
\[
\nuc_{\le k-1}(A_\bs) \;\arr\; \mathrm{Ker} \Bigl( \nuc_{\le k} (A_\bs) \;\arr\; \nuc_{\le k}(A_\bs) \otimes_{\nuc(A_\bs)} \nuc(L_kA, A_\bs) \Bigr)
\] is an equivalence.
\end{lemma}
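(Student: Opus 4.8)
The plan is to identify all three categories in the statement with explicit full subcategories of $\mod_A = \nuc(A_\bs)$, and then check that the two descriptions coincide. Recall from the discussion preceding the proposition that $\nuc_{\le k}(A_\bs)$ is the full subcategory of $S_k$-torsion $A$-modules, i.e.\ those $M$ with $\Ga_{S_k} \otimes_A M \xrightarrow{\sim} M$; similarly $\nuc_{\le k-1}(A_\bs)$ is the full subcategory of $S_{k-1}$-torsion modules. So the first task is to describe the kernel $\mathrm{Ker}\bigl(\nuc_{\le k}(A_\bs) \to \nuc_{\le k}(A_\bs) \otimes_{\nuc(A_\bs)} \nuc(L_kA, A_\bs)\bigr)$ as a subcategory of $S_k$-torsion modules. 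Here I would use Lemma \ref{lem-main-prop-part1} and Corollary \ref{cor-nuc-of-comp-is-idempotent} to rewrite the functor as the base change of $\nuc(A^\thicksim_{S_k}, A_\bs) \to \nuc(L_kA, A_\bs)$ along $\nuc(A^\thicksim_{S_k}, A_\bs) \curvearrowright \nuc_{\le k}(A_\bs)$, and then use the recollement for the skeletal filtration: since $L_kA \simeq (\O)^\thicksim_{S_k, S_{k-1}^c}$ is $A^\thicksim_{S_k}$ further completed away from $S_{k-1}$, the localization $\nuc(A^\thicksim_{S_k}, A_\bs) \to \nuc(L_kA, A_\bs)$ should have kernel governed by $S_{k-1}$-torsion phenomena. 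Concretely, an object of $\nuc_{\le k}(A_\bs)$ should lie in the kernel precisely when its image in $\nuc(L_kA, A_\bs)$ vanishes, and $L_kA \simeq \prod_{\dim(A/\p)=k} \leftcomp\, \O_{X,\p}$ detects exactly the "generic on the $k$-skeleton" part, so the kernel consists of those $S_k$-torsion modules that are already $S_{k-1}$-torsion.

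The key computation, then, is to verify: for $M \in \nuc_{\le k}(A_\bs)$, the module $M \otimes_{\nuc(A_\bs)} \nuc(L_kA, A_\bs)$ vanishes if and only if $M$ is $S_{k-1}$-torsion. One direction is that $S_{k-1}$-torsion modules are killed: since $L_kA$ is completed away from $S_{k-1}$, tensoring an $S_{k-1}$-torsion module with it gives zero (this is the standard recollement vanishing, and one can check it on generators $A/I$ with $\dim(A/I) \le k-1$, where the completion of $A/I$ away from $S_{k-1} \supseteq \spec(A/I)$ is zero). For the converse, I would argue that an $S_k$-torsion module $M$ with $M \otimes_{\nuc(A_\bs)} \nuc(L_kA, A_\bs) = 0$ must be supported on $S_{k-1}$: the composite $\nuc_{\le k}(A_\bs) \to \nuc(L_kA, A_\bs) \to \prod_{\dim(A/\p)=k} \leftcomp\,\O_{X,\p}\text{-modules}$ remembers the stalks at all the $k$-dimensional points, and vanishing of all those stalks together with $S_k$-torsion forces the support to drop into $S_{k-1}$. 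Once both subcategories of $\mod_A$ are identified with the $S_{k-1}$-torsion modules, the functor $\nuc_{\le k-1}(A_\bs) \to \mathrm{Ker}(\ldots)$ is fully faithful (being induced by a fully faithful inclusion of full subcategories of $\mod_A$) and essentially surjective (by the identification just made), hence an equivalence.

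The main obstacle I anticipate is making the "converse" direction of the key computation precise at the level of categories rather than individual modules — that is, showing that the base-changed localization $\nuc_{\le k}(A_\bs) \to \nuc_{\le k}(A_\bs) \otimes_{\nuc(A_\bs)} \nuc(L_kA, A_\bs)$ really has kernel exactly $\nuc_{\le k-1}(A_\bs)$ and not something smaller. The clean way to handle this is probably to note that, after the identification via Lemma \ref{lem-main-prop-part1}, the functor in question is a smashing localization of $\nuc_{\le k}(A_\bs)$ away from the idempotent algebra coming from $L_kA$, so its kernel is itself a tensor ideal generated by the appropriate torsion objects; one then checks on generators (the $A/I$ with $\dim(A/I) \le k$) that the kernel is generated by those with $\dim(A/I) \le k-1$, using that the completed localization at a $k$-dimensional prime $\p$ of $A/I$ is nonzero exactly when $\p$ is a generic point of $\spec(A/I)$ of dimension $k$. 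This reduces the whole lemma to the analogous fact already established implicitly in the recollement structure of the skeletal filtration in \cite{brav2024beilinsonparshinadelessolidalgebraic}, section 3.2, and to the explicit product description of $L_kA$.
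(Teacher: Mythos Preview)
Your overall plan---identify both $\nuc_{\le k-1}(A_\bs)$ and the kernel in question with the full subcategory of $S_{k-1}$-torsion modules inside $\nuc_{\le k}(A_\bs)$---is correct, and the easy inclusion $\nuc_{\le k-1}(A_\bs) \subseteq \mathrm{Ker}(\ldots)$ goes exactly as you say. The gap is in the converse direction, and you correctly flag it as the main obstacle, but your proposed resolution does not close it. Concretely: knowing that the functor is a smashing localization and checking which of the generators $A/I$ land in the kernel tells you only which generators are acyclic; it does \emph{not} tell you that the kernel is the localizing subcategory generated by those acyclics. A priori the kernel could be strictly larger. Your appeal to ``the recollement structure of the skeletal filtration'' gives you that $\nuc_{\le k-1}(A_\bs) = \ker\bigl(\nuc_{\le k}(A_\bs) \to \nuc(\Ga(S_{k-1}^c,\O)_\bullet)\bigr)$, but the lemma is about a \emph{different} functor $\nuc_{\le k}(A_\bs) \to \nuc_{\le k}(A_\bs) \otimes_{\nuc(A_\bs)} \nuc(L_kA, A_\bs)$, and you never supply the bridge between the two kernels.

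The paper supplies exactly this bridge, by a different and more categorical route. Rather than working on generators, it observes that the composite
\[
\nuc_{\le k}(A_\bs) \;\longrightarrow\; \nuc_{\le k}(A_\bs) \otimes_{\nuc(A_\bs)} \nuc((L_kA)_\bullet) \;\longrightarrow\; \nuc((L_kA)_\bullet)
\]
has a \emph{fully faithful} second arrow, because $\nuc((L_kA)_\bullet)$ is dualizable over $\nuc(A_\bs)$ (so tensoring the fully faithful inclusion $\nuc_{\le k}(A_\bs) \hookrightarrow \nuc(A_\bs)$ with it preserves full faithfulness). Hence the kernel of the first arrow equals the kernel of the composite. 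But the composite factors through $\nuc(\Ga(S_{k-1}^c,\O)_\bullet)$---indeed, since $\nuc_{\le k}(A_\bs)$ already sits inside $A^\thicksim_{S_k}$-modules and $(A^\thicksim_{S_k}, A_\bs) \otimes_{A_\bs} \Ga(S_{k-1}^c,\O)_\bullet \simeq (L_kA)_\bullet$, the image of $\nuc_{\le k}(A_\bs) \to \nuc(\Ga(S_{k-1}^c,\O)_\bullet)$ already lands in $(L_kA)_\bullet$-modules---so its kernel is $\nuc_{\le k-1}(A_\bs)$ by definition. Together with Corollary~\ref{cor-nuc-of-a-completion} identifying $\nuc(L_kA,A_\bs)\simeq\nuc((L_kA)_\bullet)$, this finishes the proof without any pointwise or generator-by-generator analysis. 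That dualizability step is the missing idea in your proposal.
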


\begin{proof}
Because we a have commutative diagram \[\xymatrix{
A^\thicksim_{S_{k}} - \mod_{A_\bs} \ar@{^(->}[rr] && \mod_{A_\bs} \ar[rr] && \Ga(S_{k-1}^c, \O)_\bullet - \mod \\
\nuc_{\le k}(A_\bs) \ar@{^(->}[u] \ar@{^(->}[rr] && \nuc(A_\bs) \ar[rr]\ar@{^(->}[u] && \nuc(\Ga(S_{k-1}^c, \O)_\bullet) \ar@{^(->}[u]
}\]
and the isomorphism \[
(A^\thicksim_{S_{k}}, A_\bs) \otimes_{A_\bs} \Ga(S_{k-1}^c, \O)_\bullet \;\simeq\; (L_{k}A)_\bullet \;,
\]
the image of the composite \[
\nuc_{\le k} (A_\bs) \;\arr\; \nuc(A_\bs) \;\arr\; \nuc(\Ga(S_{k-1}^c, \O)_\bullet) \;\subset\; \Ga(S_{k-1}^c, \O)_\bullet - \mod 
\] lies inside the full subcategory \[
(L_{k}A)_\bullet - \mod \;\simeq\; L_{k}A - \mod \bigl( \Ga(S_{k-1}^c, \O)_\bullet - \mod \bigr) \;\subseteq\; \Ga(S_{k-1}^c, \O)_\bullet - \mod \;.
\] It then follows that the kernel \[
\nuc_{\le k-1} (A_\bs) \;\simeq\; \mathrm{Ker} \Bigl( \nuc_{\le k} (A_\bs) \;\arr\; \nuc(\Ga(S_{k-1}^c, \O)_\bullet) \Bigr)
\] can be computed as kernel of the map \[
\nuc_{\le k} (A_\bs) \;\arr\; \nuc((L_{k}A)_\bullet) \;.
\]
Moreover, by the adjunction, the latter factors as the composite
\[
\nuc_{\le k} (A_\bs) \;\arr\; \nuc_{\le k} (A_\bs) \otimes_{\nuc(A_\bs)} \nuc((L_{k}A)_\bullet) \;\arr\; \nuc((L_{k}A)_\bullet) \;,
\]
and the functor \[
\nuc_{\le k} (A_\bs) \otimes_{\nuc(A_\bs)} \nuc((L_{k}A)_\bullet) \;\arr\; \nuc((L_{k}A)_\bullet)
\] is fully faithful because the category $\nuc((L_{k}A)_\bullet)$ is dualizable over $\nuc(A_\bs)$.
It remains to recall that we have an equivalence \[
\nuc(L_kA, A_\bs) \;\simeq\; \nuc((L_kA)_\bullet)
\] by Corollary \ref{cor-nuc-of-a-completion}.
\end{proof}

\ssec{Adelic descent for nuclear sheaves}
For completeness, here we state and prove a `categorical' version of Theorem \ref{thm-nuc-adelic-descent}.
We note that the second claim can also be deduced from Theorem 4.2.4 of \cite{brav2024beilinsonparshinadelessolidalgebraic}
by applying rigidification.
\begin{theorem}\label{thm-adelic-descent-cats}
The functors \[
\nuc(X_\bs) \;\arr\; \stackrel[\substack{0 \le k_0 < \ldots < k_m \le d \\ m \ge 0}]{}{\lim} \nuc(L_{k_0} \ldots L_{k_m} \O_X, X_\bs)
\] and \[
\nuc(X_\bs) \;\arr\; \stackrel[\substack{0 \le k_0 < \ldots < k_m \le d \\ m \ge 0}]{\dual}{\lim} \nuc(L_{k_0} \ldots L_{k_m} \O_X, X_\bs)
\] are equivalences. In particular, the forgetful functor $\pr^\dual_\st \arr \pr^L_\st$ preserves this limit.
\end{theorem}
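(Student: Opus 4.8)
The plan is to run the proof of Theorem~\ref{thm-nuc-adelic-descent} one categorical level up, replacing ``apply $F$ to a fibre sequence'' by ``pass to the kernel of a strongly continuous localization'', and to carry the $\pr^L_\st$ and $\pr^\dual_\st$ assertions through the same argument at once. First I would reduce to $X$ affine as in \S\ref{ssec-reduction-to-affine}: for a Zariski cover $X=U\cup V$ the square obtained from (\ref{nuc-pull-back-Zar-open-diag}), together with its ``empty tuple'' version $\nuc(X_\bs)\to\nuc(U_\bs)\times_{\nuc((U\cap V)_\bs)}\nuc(V_\bs)$, is a pull-back in $\calg(\pr^L_\st)$, and since by Proposition~\ref{prop-upper-star-for-Zar-is-LL-localization} all of its functors are strongly continuous localizations it is also a pull-back in $\pr^\dual_\st$, carried to the $\pr^L_\st$ pull-back by the forgetful functor. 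As this holds levelwise in the adelic variable (by \S\ref{sssec-adelic-ring-restricts}) and limits commute with limits, the descent statement for $U$, $V$ and $U\cap V$ implies it for $X$; the Mayer--Vietoris induction of \S\ref{ssec-reduction-to-affine} reduces us to $X=\spec(A)$.

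So take $X=\spec(A)$, $\dim A=d$. Mirroring the proof of Theorem~\ref{thm-nuc-adelic-descent}, I would prove by induction on $k\in\{0,\dots,d\}$ that the canonical functor
\[
\nuc_{\le k}(A_\bs)\;\arr\;\stackrel[\substack{0\le k_0<\dots<k_m\le k\\ m\ge 0}]{}{\lim}\;\nuc_{\le k}(A_\bs)\otimes_{\nuc(A_\bs)}\nuc(L_{k_0}A,A_\bs)\otimes_{\nuc(A_\bs)}\cdots\otimes_{\nuc(A_\bs)}\nuc(L_{k_m}A,A_\bs)
\]
is an equivalence, and likewise with $\lim$ replaced by $\stackrel{\dual}{\lim}$ (the functors involved are all strongly continuous and $\nuc(A_\bs)$-linear, so the two arguments coincide). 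Taking $k=d$, where $\nuc_{\le d}(A_\bs)=\nuc(A_\bs)$, and using Corollary~\ref{cor-adelic-nuc-splits-as-tensor-product} to identify the iterated tensor products with $\nuc(L_{k_0}\dots L_{k_m}A,A_\bs)$, recovers both equivalences of the theorem; and since the natural map $\nuc(X_\bs)\to\lim$ factors as $\nuc(X_\bs)\to\stackrel{\dual}{\lim}\to\lim$, the first two being equivalences forces $\stackrel{\dual}{\lim}\to\lim$ to be one, which is the ``in particular''. For the base $k=0$ the indexing poset is a point, and the single functor $\nuc_{\le 0}(A_\bs)\to\nuc_{\le 0}(A_\bs)\otimes_{\nuc(A_\bs)}\nuc(L_0A,A_\bs)$ is an equivalence, since by the first claim of Proposition~\ref{prop-main-tech-for-induction-step} and $L_0A\simeq A^\thicksim_{S_0}$ the category $\nuc_{\le 0}(A_\bs)$ is already a module over $\nuc(A^\thicksim_{S_0},A_\bs)$, which is idempotent over $\nuc(A_\bs)$ by Corollary~\ref{cor-nuc-of-comp-is-idempotent}.

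For the step from $k$ to $k+1$: by the second claim of Proposition~\ref{prop-main-tech-for-induction-step} the functor $\nuc_{\le k+1}(A_\bs)\to\nuc_{\le k+1}(A_\bs)\otimes_{\nuc(A_\bs)}\nuc(L_{k+1}A,A_\bs)$ is a strongly continuous $\nuc(A_\bs)$-linear localization, with kernel $\nuc_{\le k}(A_\bs)$ ($\nuc(A_\bs)$-linearly) by the third claim. Tensoring it over $\nuc(A_\bs)$ with each $\nuc(L_{k_0}A,A_\bs)\otimes_{\nuc(A_\bs)}\cdots\otimes_{\nuc(A_\bs)}\nuc(L_{k_m}A,A_\bs)$, $0\le k_0<\dots<k_m\le k$, again yields a strongly continuous localization, whose kernel is the corresponding tensor product of $\nuc_{\le k}(A_\bs)$ --- using that tensoring a Verdier sequence of dualizable categories over the rigid base $\nuc(A_\bs)$ with a dualizable category stays a Verdier sequence, and Corollary~\ref{cor-adelic-nuc-splits-as-tensor-product} to fold the targets into $\nuc(L_{k_0}\dots L_{k_m}L_{k+1}A,A_\bs)$. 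Thus the $(k+2)$-cube indexing the level-$(k+1)$ statement is, in the coordinate direction of $k+1$, a map of localizations whose cube of kernels is exactly the $(k+1)$-cube of the level-$k$ statement, cartesian by the inductive hypothesis. By the categorical cubical reduction principle --- a cube that in one direction consists of localizations is cartesian iff the induced cube of kernels is, which follows from the splitting formula $\lim_{S\neq\emptyset}\mathcal{X}(S)\simeq\mathcal{B}(\emptyset)\times_{L\mathcal{B}}L\mathcal{A}$ for a cube presented as $\mathcal{A}\to\mathcal{B}$, the commutation of kernels with limits, and the elementary fact that in a commuting square of localizations the comparison to the pull-back is an equivalence exactly when the map of kernels is --- the $(k+2)$-cube is cartesian, completing the induction; the same runs in $\pr^\dual_\st$.

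The step requiring the most care is the bookkeeping with dualizable categories: one must isolate, from the formalism of Efimov and Clausen--Scholze, that along a strongly continuous localization of dualizable categories the kernel and the recollement it sits in are stable under base change and computed compatibly in $\pr^L_\st$ and $\pr^\dual_\st$, that tensoring such a localization over the rigid base $\nuc(A_\bs)$ with a dualizable category preserves it and computes the kernel as the expected tensor product, and that $\pr^\dual_\st\to\pr^L_\st$ preserves the resulting Verdier pull-backs. Once these are in hand the cubical reduction is formal, and the argument of Theorem~\ref{thm-nuc-adelic-descent} transplants almost verbatim.
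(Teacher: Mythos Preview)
Your proposal is correct and follows essentially the same route as the paper: reduce to the affine case via the Zariski argument of \S\ref{ssec-reduction-to-affine} together with Efimov's Proposition~1.87 for the $\pr^\dual_\st$ pull-backs, and then run the same induction on $k$ with $\nuc_{\le k}(A_\bs)$, using Proposition~\ref{prop-main-tech-for-induction-step}, Corollary~\ref{cor-nuc-of-comp-is-idempotent}, and Corollary~\ref{cor-adelic-nuc-splits-as-tensor-product}. The only cosmetic difference is that the paper packages the induction step as a single pull-back square with the pre-computed $k$-limit $\K$ in the bottom row and matches kernels of the horizontal arrows, whereas you phrase the same step via the cubical reduction principle applied to the full cube of kernels; these are the same argument.
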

By the same reduction as in section \ref{ssec-reduction-to-affine} combined with Proposition 1.87 of
\cite{efimov2025ktheorylocalizinginvariantslarge}, it suffices to prove the statement in the affine case. We prove the following
proposition by the induction on $k$.
\begin{proposition}
For any $k \in 0..d$, the natural functors \[
\nuc_{\le k} (A_\bs) \;\arr\; \stackrel[\substack{0 \le k_0 < \ldots < k_m \le k \\ m \ge 0}]{}{\lim} \nuc_{\le k}(A_\bs) \otimes_{\nuc(A_\bs)} \nuc(L_{k_0} \ldots L_{k_m} A, A_\bs)
\] and \[
\nuc_{\le k} (A_\bs) \;\arr\; \stackrel[\substack{0 \le k_0 < \ldots < k_m \le k \\ m \ge 0}]{\dual}{\lim} \nuc_{\le k}(A_\bs) \otimes_{\nuc(A_\bs)} \nuc(L_{k_0} \ldots L_{k_m} A, A_\bs)
\] are equivalences. In particular, the forgetful functor $\pr^\dual_\st \arr \pr^L_\st$ preserves this limit.
\end{proposition}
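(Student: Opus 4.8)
The plan is to prove both claims simultaneously by induction on $k$, mirroring the structure of the proof of Theorem \ref{thm-nuc-adelic-descent} but now working at the level of categories (in $\pr^L_\st$ and $\pr^\dual_\st$) rather than applying a localizing invariant. The key observation is that Proposition \ref{prop-main-tech-for-induction-step} already provides all the structural inputs needed: part (2) gives that the localization functor $\nuc_{\le k+1}(A_\bs) \arr \nuc_{\le k+1}(A_\bs) \otimes_{\nuc(A_\bs)} \nuc(L_{k+1}A, A_\bs)$ is a strongly continuous localization with fully faithful continuous right adjoint, and part (3) identifies its kernel with $\nuc_{\le k}(A_\bs)$. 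So each step of the induction splits the relevant cubical diagram into a fiber sequence of localizations, and the inductive hypothesis handles the kernel piece.

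\medskip\noindent
First I would handle the base case $k=0$: exactly as in the proof of Theorem \ref{thm-nuc-adelic-descent}, the functor $\nuc_{\le 0}(A_\bs) \arr \nuc_{\le 0}(A_\bs) \otimes_{\nuc(A_\bs)} \nuc(L_0 A, A_\bs)$ is an equivalence by the first claim of Proposition \ref{prop-main-tech-for-induction-step} together with $L_0 A \simeq A^\thicksim_{S_0}$, so both limit diagrams are trivially computed. For the induction step, I would invoke a categorical version of the cubical reduction principle: given a strongly continuous localization $\CC \arr \CC'$ with kernel $\K$ in $\pr^L_\st$ (equivalently, a Verdier-type fiber sequence $\K \arr \CC \arr \CC'$ with the right adjoints as described), and given that tensoring with each $\nuc(L_{k_0}\ldots L_{k_m}A, A_\bs)$ preserves this fiber sequence, one reduces the limit statement for $\CC$ over the cube $[0<1]^{k+1}$ to the limit statement for $\CC'$ over the sub-cube fixing the top coordinate and the limit statement for $\K$ over the complementary sub-cube $[0<1]^k$. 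The first is the $m=0$-in-last-coordinate face and is handled by part (2) of Proposition \ref{prop-main-tech-for-induction-step}; the second is the inductive hypothesis applied to $\K \simeq \nuc_{\le k}(A_\bs)$ via part (3), after using Corollary \ref{cor-adelic-nuc-splits-as-tensor-product} to rewrite the iterated tensor products as $\nuc(L_{k_0}\ldots L_{k_m}A, A_\bs)$. For the $\pr^\dual_\st$ version, the same argument runs verbatim once one notes that all the functors in sight are strongly continuous (the right adjoints are continuous by Proposition \ref{prop-main-tech-for-induction-step} and Corollary \ref{cor-adelic-nuc-splits-as-tensor-product}), so the fiber sequences are fiber sequences in $\pr^\dual_\st$ as well, and the forgetful functor $\pr^\dual_\st \arr \pr^L_\st$ preserves such fiber sequences and hence the resulting limits.

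\medskip\noindent
The main obstacle I anticipate is making the cubical reduction principle work cleanly at the categorical level rather than after applying $F$. In \cite{kim2021adelicdescentktheory} the principle is stated for localizing invariants valued in a stable category; here we need the analogous statement that a limit over $[0<1]^{n+1}$ of categories can be assembled from a limit over one face and a limit over the complementary cube, given a compatible system of localization sequences. The cleanest route is probably to observe that $\pr^L_\st$ and $\pr^\dual_\st$ are themselves stable (in the appropriate sense: they have a zero object and the relevant fiber/cofiber sequences agree), or more precisely to use that a limit over $[0<1]^{n+1}$ can be computed as a fiber of a map between limits over two copies of $[0<1]^n$, and then iterate; the localization-sequence hypothesis ensures that this fiber is again the category we want. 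Alternatively, one can cite Proposition 1.87 of \cite{efimov2025ktheorylocalizinginvariantslarge} (already invoked for the reduction to the affine case) and the general formalism of Efimov for assembling dualizable categories from Verdier sequences, which packages exactly this kind of bookkeeping. Once that formal principle is in place, the rest is a direct transcription of the proof of Theorem \ref{thm-nuc-adelic-descent} with ``$F(-)$'' erased.
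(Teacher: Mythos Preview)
Your proposal is correct and follows essentially the same route as the paper: induction on $k$, with the base case handled via part (1) of Proposition \ref{prop-main-tech-for-induction-step} and the identification $L_0A \simeq A^\thicksim_{S_0}$, and the induction step using parts (2) and (3) to peel off the top coordinate, with Proposition 1.87 of \cite{efimov2025ktheorylocalizinginvariantslarge} for the dualizable limit.

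The one place your write-up is less efficient than the paper's is the ``main obstacle'' you flag. The paper does not invoke a general categorical cubical reduction principle; instead it writes the $(k+1)$-cube limit as a single pull-back square whose rows are the localizations $\nuc_{\le k+1}(A_\bs) \to \nuc_{\le k+1}(A_\bs)\otimes_{\nuc(A_\bs)}\nuc(L_{k+1}A,A_\bs)$ and its tensor with the sub-cube limit $\K = \lim_{0\le k_0<\ldots<k_m\le k}\nuc(L_{k_0}\ldots L_{k_m}A,A_\bs)$. Both rows are strongly continuous localizations by part (2), so the square is a pull-back iff the kernels of the rows agree, and that is exactly part (3) plus the induction hypothesis. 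This packaging avoids any appeal to stability of $\pr^L_\st$ or an abstract fiber-of-limits argument: it is just the elementary fact that a commutative square of localizations in $\pr^L_\st$ with matching kernels is a pull-back. So the obstacle you anticipate dissolves once you write down that square.
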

\begin{proof} In the base case---$k = 0$---both claims say that the map \[
\nuc_{\le 0}(A_\bs) \;\arr\; \nuc_{\le 0}(A_\bs) \otimes_{A_\bs} \nuc(L_0A, A_\bs)
\] is an equivalence. Because $A^\thicksim_{S_0} = L_0A$, that immediately follows from the first claim of
Proposition \ref{prop-main-tech-for-induction-step} combined with Corollary \ref{cor-nuc-of-comp-is-idempotent}.

Now assume that both claims hold for some $k \ge 0$. Let us denote the limit \[
\stackrel[\substack{0 \le k_0 < \ldots < k_m \le k \\ m \ge 0}]{}{\lim} \nuc(L_{k_0} \ldots L_{k_m} A, A_\bs) 
\] by $\K$. It suffices to prove that the square
\[\xymatrix{
\nuc_{\le k+1}(A_\bs) \ar[rr]\ar[d] && \nuc_{\le k+1}(A_\bs) \otimes_{\nuc(A_\bs)} \nuc(L_{k+1}A, A_\bs) \ar[d] \\
\nuc_{\le k+1}(A_\bs) \otimes_{\nuc(A_\bs)} \K \ar[rr] && \nuc_{\le k+1}(A_\bs) \otimes_{\nuc(A_\bs)} \nuc(L_{k+1}A, A_\bs) \otimes_{\nuc(A_\bs)} \K
}\] is a pull-back square $\pr^L_\st$. By Proposition \ref{prop-main-tech-for-induction-step},
both of the horizontal arrows are strongly continuous localizations.
It therefore suffices to prove that the kernels of the horizontal arrows agree, which follows
from the last claim of Proposition \ref{prop-main-tech-for-induction-step} and the induction hypothesis.
The case of the dualizable limit is handled identically except we additionally use Proposition 1.87 of
\cite{efimov2025ktheorylocalizinginvariantslarge} to argue that the above square is also a pull-back square in $\pr^\dual_\st$.
\end{proof}

\ssec{General filtration by specializing subsets}
In fact, the relevant part of the argument almost without modification applies to the following more general situation.
Given a filtration \[
\emptyset = T_{-1} \subset T_0 \subset T_1 \subset \ldots \subset T_n = X 
\] by specialization closed subsets, we get a cubical diagram \begin{equation}\label{gen-filtr-cubical-diag}
0 \le k_0 < k_1 < \ldots < k_m \le n \;\mapsto\; \nuc\Bigl( (\O_X)^\thicksim_{T_{k_0}, T^c_{k_0-1}} \otimes_{X_\bs} \ldots \otimes_{X_\bs} (\O_X)^\thicksim_{T_{k_m}, T^c_{k_m-1}}, X_\bs \Bigr) \;.
\end{equation}
\begin{theorem}
The cube (\ref{gen-filtr-cubical-diag}) is a limit cube in both $\pr^L_\st$ and $\pr^\dual_\st$, and is mapped to a limit cube
by any stable localizing invariant.
\end{theorem}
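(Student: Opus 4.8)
The strategy is to rerun, essentially word for word, the proofs of Theorems~\ref{thm-nuc-adelic-descent} and~\ref{thm-adelic-descent-cats}, with the skeletal filtration $S_\bullet$ everywhere replaced by $T_\bullet$, so that the adelic ring $L_{k_0}\ldots L_{k_m}\O_X$ becomes $(\O_X)^\thicksim_{T_{k_0},T^c_{k_0-1}}\otimes_{X_\bs}\ldots\otimes_{X_\bs}(\O_X)^\thicksim_{T_{k_m},T^c_{k_m-1}}$. First I would check that all auxiliary results of Section~2 survive the substitution: none of them used the skeletal filtration beyond the fact that $S_k$ is specialization closed and $S^c_{k-1}$ generalization closed, and every input borrowed from \cite{brav2024beilinsonparshinadelessolidalgebraic} --- idempotence of $(\O_X)^\thicksim_{T,E}$ over $X_\bs$, symmetric monoidality of $(-)^\thicksim_T$ on eventually connective modules (Corollary~A.2.7), the completeness criteria (Propositions~A.1.1 and~A.2.1), distributivity of the tensor product over products and completions (Lemmas~1.1.7 and~1.3.1), and the finite projective dimension of $\Ga_T$ over $A$ (Proposition~1.6.2) --- is available for an arbitrary specialization closed $T$ and generalization closed $E$. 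Hence the analogues of Corollaries~\ref{cor-nuc-of-a-completion}--\ref{cor-nuc-of-comp-is-idempotent}, Lemma~\ref{lem-nuc-adelic}, Proposition~\ref{prop-nuc-adelic} and Corollaries~\ref{cor-nuc-over-adelic-and-rig}--\ref{cor-adelic-nuc-splits-as-tensor-product} hold with the same proofs (reduce to the affine case by Zariski descent, then compute with products and completions at the $T_{k_j}$); in the places where the skeletal argument used the identity $L_0\O_X\simeq(\O_X)^\thicksim_{S_0}$ (such as the $m=0$ step of Proposition~\ref{prop-nuc-adelic}) one instead applies the two Lemmas preceding Corollary~\ref{cor-nuc-of-a-completion} to the ring $(\O_X)^\thicksim_{T_{k_j},T^c_{k_j-1}}$ itself.

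Next I would check that the reduction to the affine case of Section~\ref{ssec-reduction-to-affine} still applies. For a Zariski open $j\colon U\hookrightarrow X$ the intersected chain $\emptyset=T_{-1}\cap U\subseteq\ldots\subseteq T_n\cap U=U$ is again a filtration of $U$ by specialization closed subsets, and $j^*$ sends each factor $(\O_X)^\thicksim_{T_k,T^c_{k-1}}$ to $(\O_U)^\thicksim_{T_k\cap U,\,(T_{k-1}\cap U)^c}$, the ring attached to this intersected filtration, so the square~(\ref{nuc-pull-back-Zar-open-diag}) is available. The one step requiring a genuinely new argument is the analogue of Proposition~\ref{prop-upper-star-for-Zar-is-LL-localization}, whose proof invoked the presentation of the adelic rings as products over flags of points. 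Writing $R=(\O_X)^\thicksim_{T_{k_0},T^c_{k_0-1}}\otimes_{X_\bs}\ldots$, I would instead use the projection formula $j_*j^*R\simeq R\otimes_{X_\bs}j_*\O_U$ together with the fact that $j_*\O_U$, being an object of $\qcoh(X)\simeq\nuc(X_\bs)$, is a filtered colimit $\colim_\alpha C_\alpha$ of perfect complexes on $X$. Then, for any compact $P\in\qcoh(X_\bs)^\omega$,
\[
\uhom_{X_\bs}(P,j_*j^*R)\;\simeq\;\colim_\alpha\uhom_{X_\bs}(P,R\otimes_{X_\bs}C_\alpha)\;,
\]
since $\uhom_{X_\bs}(P,-)$ commutes with filtered colimits for compact $P$; as each $C_\alpha$ is perfect, $R\otimes_{X_\bs}C_\alpha$ is a retract of a finite colimit of shifts of $R$, so each $\uhom_{X_\bs}(P,R\otimes_{X_\bs}C_\alpha)$ is a retract of a finite colimit of shifts of the $R$-nuclear module $\uhom_{X_\bs}(P,R)$ (nuclear by the analogue of Proposition~\ref{prop-nuc-adelic}), hence $R$-nuclear, and therefore so is the colimit. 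Thus $j_*$ preserves nuclear objects, which yields the required fully faithful continuous right adjoint of $j^*$, and the reduction to $X=\spec A$ proceeds as before.

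In the affine case I would put $\nuc_{\le k}(A_\bs):=\mathrm{Ker}\bigl(\nuc(A_\bs)\arr\nuc(\Ga(T_k^c,\O)_\bullet)\bigr)$; this makes sense because $\Ga(T_k^c,\O)$ is discrete --- a filtered colimit of the discrete rings $\Ga(V,\O)$, just as $\Ga(S_k^c,\O)$ was --- so the right adjoint along $A_\bs\arr\Ga(T_k^c,\O)_\bullet$ preserves nuclear objects and one gets the same recollement, under which $\nuc_{\le k}(A_\bs)$ becomes the category of $T_k$-torsion $A$-modules, generated under colimits by the $A/I$ with $V(I)\subseteq T_k$. The crux is the analogue of Proposition~\ref{prop-main-tech-for-induction-step}, whose three parts I would establish, respectively, by the arguments of Lemma~\ref{lem-main-prop-part1}, of Lemma~\ref{lem-main-prop-part2} and Corollary~\ref{cor-main-right-adj}, and of the final Lemma in the proof of Proposition~\ref{prop-main-tech-for-induction-step}: part~(1) reduces to the computation $A/I\otimes_{A_\bs}(\bigoplus_\N A)^\thicksim_{T_k}\simeq\bigoplus_\N A/I$ for $V(I)\subseteq T_k$ (using that such $A/I$ is pseudo-compact over $A_\bs$, that $\Ga_{T_k}$ has finite projective dimension, and that $\Ga_{T_k}\otimes_A A/I\simeq A/I$); part~(2) reduces to the isomorphism of $(\bigoplus_\N A)$-type modules supplied by Proposition~A.2.1 of \cite{brav2024beilinsonparshinadelessolidalgebraic}; and part~(3) uses the recollement $A^\thicksim_{T_k}-\mod_{A_\bs}\hookrightarrow\mod_{A_\bs}\arr\Ga(T_{k-1}^c,\O)_\bullet-\mod$ together with the identification $(A^\thicksim_{T_k},A_\bs)\otimes_{A_\bs}\Ga(T_{k-1}^c,\O)_\bullet\simeq(A^\thicksim_{T_k,T^c_{k-1}})_\bullet$.

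Granting this analogue of Proposition~\ref{prop-main-tech-for-induction-step}, all three assertions of the theorem follow by the same inductions on $k$ used for Theorems~\ref{thm-nuc-adelic-descent} and~\ref{thm-adelic-descent-cats}: for an arbitrary stable localizing invariant one splits the cube into fiber sequences via the cubical reduction principle (Proposition~2.2.2 of \cite{kim2021adelicdescentktheory}), and for the limit statements in $\pr^L_\st$ and $\pr^\dual_\st$ one runs the pull-back-square induction, using Proposition~1.87 of \cite{efimov2025ktheorylocalizinginvariantslarge} to pass to the dualizable limit. The base case $k=0$ is immediate because $(\O_X)^\thicksim_{T_0,T^c_{-1}}=(\O_X)^\thicksim_{T_0}$, so part~(1) already shows the relevant localization is an equivalence. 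I expect the main obstacle to be exactly the analogue of Proposition~\ref{prop-upper-star-for-Zar-is-LL-localization} above, since that is the only place where the original proof genuinely exploited the product-over-flags presentation of the adelic rings, which a general specialization closed filtration does not enjoy; everything else is a mechanical replacement of $S_\bullet$ by $T_\bullet$.
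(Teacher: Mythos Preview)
Your proposal is correct and matches the paper's approach: the paper itself offers no proof beyond the sentence ``the relevant part of the argument almost without modification applies,'' and you have correctly carried this out by replacing $S_\bullet$ with $T_\bullet$ throughout. You have in fact done more than the paper, by isolating the one step where the original argument genuinely used the skeletal filtration --- the retract claim $j_*j^*L_{k_0}\ldots L_{k_m}\O_X \hookrightarrow L_{k_0}\ldots L_{k_m}\O_X$ in the proof of Proposition~\ref{prop-upper-star-for-Zar-is-LL-localization}, which relied on the product-over-flags formula --- and replacing it with a clean alternative via the projection formula and the presentation of $j_*\O_U\in\qcoh(X)$ as a filtered colimit of perfect complexes; this argument is valid and arguably preferable even in the skeletal case.
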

However, in general, we do not know how to compute the tensor products appearing in this diagram. It also might be worth noting that it is
unclear to the author whether an analog of Theorem 4.2.4 of \cite{brav2024beilinsonparshinadelessolidalgebraic} holds in this generality.

\bibliographystyle{IEEEtran}
\bibliography{refs}{}

\bigskip
\bigskip

\noindent Grigorii~Konovalov, {\sc Center for Advanced Studies, Skoltech, Moscow; Centre of Pure Mathematics, MIPT, Moscow;
Faculty of Mathematics, National Research University Higher School of Economics, Moscow;}
\href{mailto:grisha.v.konovalov@gmail.com}{grisha.v.konovalov@gmail.com}

\end{document}